\newtheorem{theorem}{Theorem}[section]
\newtheorem{lemma}[theorem]{Lemma}
\newtheorem{proposition}[theorem]{Proposition}
\newtheorem{conjecture}[theorem]{Conjecture}
\newtheorem{corollary}[theorem]{Corollary}
\theoremstyle{definition}
\newtheorem{definition}[theorem]{Definition}
\theoremstyle{remark}
\numberwithin{equation}{section}
\newcommand{\QQ}{\mathbb{Q}}
\newcommand{\ZZ}{\mathbb{Z}}
\newcommand{\CC}{\mathbb{C}}
\newcommand{\FF}{\mathbb{F}}
\newcommand{\Gal}{\textrm{Gal}}
\newcommand{\mfp}{\mathfrak{p}}
\newcommand{\mfq}{\mathfrak{q}}
\newcommand{\rk}{\textrm{rank }}
\newcommand{\vare}{\varepsilon}
\newcommand{\ClLp}{{A}}
\newcommand{\mmod}{\text{ mod }}
\begin{document}

\title{The unit norm index and $p$-class group in certain degree $\ell$ extensions}

\author{Ariella Kirsch}

\date{\today}

\begin{abstract}
We examine when units in a field are the norms of elements in an extension
field, given certain conditions. We apply these results to the study of
the $\ell$-class groups in lifts of the anti-cyclotomic $\ZZ_2$-extension
of $\QQ(i)$.
\end{abstract}

\maketitle

\section{Introduction}\label{sec:intro}

In \cite{iwasawa1973mu}, Iwasawa
showed that there exist $\ZZ_p$-extensions in which the $\mu$-invariant is
non-zero. In \cite{washington1975class}, Washington showed that there are
$\ZZ_p$-extensions in which the $\ell$-part of the class group is unbounded,
where $\ell \neq p$ is prime.
Both of these results began with a $\ZZ_p$-extension $K_\infty/K_0$
and then lifted it via a cyclic extension $L_0/K_0$ to a $\ZZ_p$-extension
$L_\infty/L_0$ where $L_\infty = K_\infty L_0$.
 Then Chevalley's Formula (see \ref{chevalley})
was applied to the intermediate fields $L_n/K_n$.

Chevalley's Formula relies on the unit index
$$[E_{K_n}:E_{K_n} \cap N_{L_n/K_n}(L_n^\times)]$$
where $L_n/K_n$ is a cyclic extension, $E_{K_n}$ is the unit group of $K_n$ and
$N_{L_n/K_n}$ is the norm map from $L_n$ to $K_n$.
This index is in general difficult to compute, but fortunately trivial estimates
were sufficient for the results above. In this paper,
inspired by the work of Wittmann \cite{wittmann2004} and Gerth \cite{gerth1976},
we study this index
in the case where $L_0/K_0$ is a degree $\ell$ extension ($\ell \neq 2$)
and $K_\infty/K_0$ is the anti-cyclotomic $\ZZ_2$-extension of an imaginary
quadratic field.

We show that given certain
conditions on $\ell$ and the ramified primes in $L_n/K_n$, either all of the units in $K_n$
are norms of elements in $L_n$ modulo $\ell^{th}$ powers or none of them are.
We then give heuristics that indicate that in general,
the difference between the actual value of the index and the trivial estimate
is probably bounded.
Therefore the principal cause of an unbounded $\ell$-class number in $L_\infty/L_0$
should be due to the contribution from Chevalley's Formula and not from some other
unexplained phenomenon.

\section{The anti-cyclotomic $\ZZ_2$-extension}\label{sec:Kn}

Let $K_0$ be an imaginary quadratic field.
Let $K_n$ be the $n^{th}$ layer, $n \geq 1$, of the anti-cyclotomic $\ZZ_2$-extension
of $K_0$.
We fix $\ell$ an odd prime. If $K_0 = \QQ(\sqrt{-3})$, then we require that $\ell \neq 3$.
This ensures that $K_n$ does not contain $\ell^{th}$ roots of unity.

In this section, we present results on the units and primes in $K_n$,
which we will then use to prove our main result, Theorem \ref{allornothing}.

\subsection{The Units of $K_n$}

Since $K_n$ is a totally imaginary field of degree $2^{n+1}$ over $\QQ$, it
has no real embeddings ($r_1 = 0$) and $2^n$ pairs of
complex embeddings ($r_2 = 2^n$). By Dirichlet's unit theorem, $K_n$ has $$r_1+r_2 - 1 = 2^n-1$$
fundamental units.

Let $E_n$ be the unit group of $K_n$ and let $U_n$ be the units of $K_n$ modulo $\ell^{th}$ powers.

We also have $\Gal(K_n/\QQ) \simeq D_n$. Let $\sigma$ and $\tau$ be generators of the Galois group where
$\sigma$ has order $2^n$ and $\tau$ has order 2.
Let $\tau$ be complex conjugation under some embedding $K_n \hookrightarrow \CC$.
We then have the following relation: $$\tau \sigma^i = \sigma^{-i} \tau.$$

\begin{proposition}\label{one_unit_fixed}
At least one unit of $K_n$ that is not a unit in $K_{n-1}$ is fixed by $\tau$:
$$\exists \hspace{.1in} u \in E_{K_n} \big \backslash E_{K_{n-1}} \text{ such that } \tau u = u.$$
\end{proposition}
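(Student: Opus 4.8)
The plan is to find the required unit inside the fixed field of complex conjugation. Let $F_n = K_n^{\langle\tau\rangle}$ be the subfield of $K_n$ fixed by $\tau$; it is a number field of degree $2^n$ over $\QQ$, and a unit $u\in E_{K_n}$ satisfies $\tau u = u$ exactly when $u\in E_{F_n}$ (if $u$ is a unit of $K_n$ lying in $F_n$, then $u^{-1}\in\OO_{K_n}\cap F_n=\OO_{F_n}$, so $u\in E_{F_n}$). Likewise put $F_{n-1}=K_{n-1}^{\langle\tau|_{K_{n-1}}\rangle}$. Since $\Gal(K_n/K_{n-1})=\langle\s^{2^{n-1}}\rangle$, basic Galois theory gives $F_n\cap K_{n-1}=K_n^{\langle\tau,\s^{2^{n-1}}\rangle}=F_{n-1}$, so a unit that is fixed by $\tau$ and lies in $K_{n-1}$ automatically lies in $F_{n-1}$; that is, $E_{F_n}\cap E_{K_{n-1}}=E_{F_{n-1}}$. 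Hence it suffices to show $E_{F_n}\supsetneq E_{F_{n-1}}$: any $u\in E_{F_n}\setminus E_{F_{n-1}}$ then lies in $E_{K_n}\setminus E_{K_{n-1}}$ and is fixed by $\tau$, which is what we want.

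To separate these two unit groups I would compare their $\ZZ$-ranks, which by Dirichlet's unit theorem amounts to counting archimedean places. Embeddings $F_n\hookrightarrow\CC$ correspond to cosets $\langle\tau\rangle g$ in $D_n$, and, using that $\tau$ is complex conjugation for our chosen embedding of $K_n$, the embedding attached to $\langle\tau\rangle g$ is real precisely when $g^{-1}\tau g$ fixes $F_n$ pointwise, i.e. when $g\in C_{D_n}(\tau)$. A one-line computation with the relation $\tau\s^i=\s^{-i}\tau$ shows $C_{D_n}(\tau)=\langle\tau,\s^{2^{n-1}}\rangle$ has order $4$, so $F_n$ has exactly two real places; thus $r_1(F_n)=2$, $r_2(F_n)=2^{n-1}-1$, and $\rk E_{F_n}=2^{n-1}$. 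Applying the same count to $F_{n-1}$ (with $\Gal(K_{n-1}/\QQ)\cong D_n/\langle\s^{2^{n-1}}\rangle$) gives $\rk E_{F_{n-1}}=2^{n-2}$ when $n\geq 2$, while for $n=1$ one has $F_0=\QQ$ of unit rank $0$ and $F_1$ real quadratic of unit rank $1$. In every case $\rk E_{F_n}>\rk E_{F_{n-1}}$, so $E_{F_n}\neq E_{F_{n-1}}$, completing the argument. As an alternative to the place count, the rank of $E_{K_n}^{\tau=1}$ can be extracted from the rational unit representation $E_{K_n}\otimes\QQ\,\oplus\,\QQ\cong\QQ[D_n/\langle\tau\rangle]$ by counting the double cosets $\langle\tau\rangle\backslash D_n/\langle\tau\rangle$, which is the same bookkeeping.

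The step requiring the most care is the archimedean place count for $F_n$, i.e. the assertion that $C_{D_n}(\tau)$ has order exactly $4$; everything else is routine Galois theory and Dirichlet's theorem. One should also watch the small case $n=1$, where $D_n$ is the Klein four-group and $F_0=\QQ$, so that the formula $\rk E_{F_{n-1}}=2^{n-2}$ is literally meaningful only for $n\geq 2$; the strict inequality of ranks nevertheless holds for $n=1$ as noted.
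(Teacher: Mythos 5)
Your proposal is correct and follows essentially the same route as the paper: identify the fixed field $F_n$ of $\tau$, use the dihedral relation to show it has exactly two real embeddings (equivalently, that $C_{D_n}(\tau)$ has order $4$), conclude $\rk E_{F_n}=2^{n-1}$ by Dirichlet, and win by a rank comparison. The only cosmetic difference is that you compare $E_{F_n}$ against $E_{F_{n-1}}$ (rank $2^{n-2}$) after noting $E_{F_n}\cap E_{K_{n-1}}=E_{F_{n-1}}$, whereas the paper compares directly against $E_{K_{n-1}}$ (rank $2^{n-1}-1$); both comparisons yield the same conclusion.
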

\begin{proof}
Let $F_n$ be the fixed field of $\tau$.
This field is of degree $2^n$ over $\QQ$.
Consider the embeddings $\sigma^i$ applied to $F_n$.
Since $\tau$ fixes $F_n$, we now determine which of the Galois elements
stabilize $F_n$,
since these will be the real embeddings. Since $$\tau \sigma^i (F_n) = \sigma^{-i} \tau (F_n) = \sigma^{-i}(F_n),$$
it's sufficient to consider just the elements of the form $\sigma^i$.
Let $x \in F_n$. Say $\sigma^i$ stabilizes $F_n$ and therefore $\tau \sigma^i (x) = \sigma^i(x)$.
Then $$\sigma^i(x) = \tau \sigma^i (x) = \sigma^{-i} \tau (x) = \sigma^{-i} (x)
\implies \sigma^{2i}(x) = x.$$
Therefore $\sigma^{2i} \in \Gal(K_n/F_n)$. Since $\sigma^{2i} \neq \tau$, we have
$i=0$ or $i=2^{n-1}$ and so there are exactly
two real embeddings. Therefore there are $2^{n-1}-1$ pairs of complex embeddings and so
$F_n$ has
$$r_1 + r_2 -1 = 2 + 2^{n-1}-1 -1 = 2^{n-1}$$ fundamental units.

The units of $K_{n-1}$ are embedded into the units of $K_n$, and similarly the units of $F_n$ are also embedded into the units of $K_n$. Since $F_n$ has $2^{n-1}$ independent units and $K_{n-1}$ has $2^{n-1}-1$ independent units, this means that at least one unit of $K_n$ that is not a unit in $K_{n-1}$ is fixed by $\tau$.
\end{proof}

Recall that we use $U_n$ to represent the units of $K_n$ modulo $\ell^{th}$ powers.
We now need to understand the structure of the relative units of $K_n$ modulo $\ell^{th}$ powers, which we define as
$$U_n^{rel} = \ker\left(N_{K_n/K_{n-1}}: U_n \rightarrow U_{n-1} \right).$$

\begin{corollary}\label{relative_unit_fixed}
At least one non-trivial relative unit of $K_n$ is fixed by $\tau$.
\end{corollary}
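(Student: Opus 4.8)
The plan is to build the required class directly from the $\tau$-fixed unit $u\in E_{K_n}\setminus E_{K_{n-1}}$ furnished by Proposition~\ref{one_unit_fixed}, by applying the operator $1-c$, where $c=\sigma^{2^{n-1}}$ is the nontrivial element of $\Gal(K_n/K_{n-1})$. Setting $v=u/c(u)$, one gets at once $N_{K_n/K_{n-1}}(v)=v\,c(v)=u/c^2(u)=1$ (since $c^2=1$), so $v$ represents a class in $U_n^{rel}$; and since $\tau$ and $c$ commute (from $\tau\sigma^i=\sigma^{-i}\tau$ together with $c^2=1$) and $\tau u=u$, also $\tau v=v$, so that class is $\tau$-fixed. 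The only thing needing an argument is that the class is nontrivial in $U_n$, i.e.\ that $v$ is not an $\ell$-th power, for a suitable choice of $u$.

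First I would record the field bookkeeping. With $F_n=K_n^\tau$ (degree $2^n$ over $\QQ$, with $2^{n-1}$ fundamental units, by the proof of Proposition~\ref{one_unit_fixed}), one checks that $c$ stabilizes $F_n$, that $F_n\cap K_{n-1}=F_{n-1}$, and that $c|_{F_n}$ generates $\Gal(F_n/F_{n-1})$. Then I would prove a descent lemma valid for any quadratic subextension $M'\subseteq M\subseteq K_n$ with nontrivial automorphism $g$: the natural map $E_{M'}/E_{M'}^\ell\to\bigl(E_M/E_M^\ell\bigr)^{g}$ is an isomorphism. Injectivity is the crux: if $w\in E_{M'}$ equals $z^\ell$ with $z\in E_M$, then $(g(z)/z)^\ell=1$, so $g(z)/z$ is an $\ell$-th root of unity in $K_n$, hence trivial, whence $z\in E_{M'}$ and $w\in E_{M'}^\ell$. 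Surjectivity uses the same fact to force $g(z)=z^{-1}$ in the relevant situation and then that $2$ is invertible modulo the odd prime $\ell$ (replace a candidate $v$ by $v^2 z^\ell$, which is $g$-fixed and congruent to $2\bar v$). Applying the lemma to $(K_n,F_n,\tau)$ identifies the image $W$ of $E_{F_n}$ in $U_n$ with $E_{F_n}/E_{F_n}^\ell$, an $\FF_\ell$-space of dimension $2^{n-1}$ on which $\tau$ acts trivially; applying it to $(F_n,F_{n-1},c)$ gives $W^{c}\cong E_{F_{n-1}}/E_{F_{n-1}}^\ell$, of dimension $2^{n-2}$ (read as $0$ when $n=1$, since $E_{K_0}$ is finite of order prime to $\ell$).

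Now the corollary follows by a dimension count. Since $\ker\bigl((1-c)\colon W\to W\bigr)=W^{c}$, the image of $1-c$ on $W$ has dimension $\dim W-\dim W^{c}$, which is $2^{n-2}$ for $n\geq 2$ and $1$ for $n=1$; in every case it is positive, so it contains a nonzero class $\bar v=(1-c)\bar u$ with $u\in E_{F_n}$. By the descent lemma $W$ injects into $U_n$, so $\bar v\neq 0$ in $U_n$; its representative $v=u/c(u)$ lies in $E_{F_n}$, hence is $\tau$-fixed; and $N_{K_n/K_{n-1}}(v)=v\,c(v)=1$, so $\bar v\in U_n^{rel}$. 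Thus $\bar v$ is a nontrivial relative unit of $K_n$ fixed by $\tau$.

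The main obstacle is exactly the nontriviality of $u/c(u)$ in $U_n$: a priori it could be an $\ell$-th power, and what rules this out is the standing hypothesis that $K_n$ contains no $\ell$-th roots of unity, entering through the injectivity clauses $E_{F_n}\cap E_{K_n}^\ell=E_{F_n}^\ell$ and $E_{F_{n-1}}\cap E_{F_n}^\ell=E_{F_{n-1}}^\ell$ (and, via surjectivity of the descent map, the fact that $c$ acts nontrivially on $W$). Once the descent lemma is in place, the commutation relations, the degree computations, and the final linear-algebra count are routine.
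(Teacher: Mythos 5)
Your construction of the candidate is the same as the paper's: starting from a $\tau$-fixed unit, apply $1-c$ where $c=\sigma^{2^{n-1}}$ generates $\Gal(K_n/K_{n-1})$, and note the result is a $\tau$-fixed norm-one element. Where you diverge is the non-triviality step, and there your route is genuinely different. The paper argues directly on the single unit $u$ from Proposition~\ref{one_unit_fixed}: if $u^{c-1}$ is an $\ell$-th power, write it as $w^{\ell^m}$ with $m$ maximal and check that $w$ itself is a non-trivial $\tau$-fixed relative unit, using the absence of $\ell$-th roots of unity twice. You instead prove a Kummer-descent lemma --- $E_{M'}/E_{M'}^\ell \to (E_M/E_M^\ell)^g$ is an isomorphism for each quadratic step inside $K_n$, with injectivity from the absence of $\ell$-th roots of unity and surjectivity from $g(z)=z^{-1}$ plus invertibility of $2$ modulo $\ell$ --- and then do a dimension count: $\dim_{\FF_\ell} W = 2^{n-1}$, $\dim_{\FF_\ell} W^c = 2^{n-2}$, hence $(1-c)W \neq 0$. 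The field-theoretic bookkeeping ($c$ stabilizes $F_n$, $F_n^c=F_{n-1}$, and the unit ranks of $F_n$ and $F_{n-1}$) all checks out, so the argument is correct. It costs more setup but buys more: it shows the $\tau$-fixed part of $U_n^{rel}$ has $\FF_\ell$-dimension at least $2^{n-2}$ rather than merely being non-trivial, and it avoids the paper's slightly delicate maximal-$m$ extraction. One small remark: you do not actually need the specific unit $u\in E_{K_n}\setminus E_{K_{n-1}}$ from Proposition~\ref{one_unit_fixed}, only the count of fundamental units of $F_n$ and $F_{n-1}$ established in its proof, so your opening paragraph overstates that dependence.
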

\begin{proof}
Let $u \in K_n$ be a unit not in $K_{n-1}$ that is fixed by $\tau$ (Proposition
\ref{one_unit_fixed}). For ease of notation, let $\alpha = \sigma^{2^{n-1}}$
and let $v=u^{\alpha-1}$. Since $N_{K_n/K_{n-1}}=\alpha+1$, $v$ is a relative unit.
Since $\tau$ and $\alpha$ commute,
$$\tau v = \tau (u^{\alpha-1}) = (\tau u)^{\alpha-1} = u^{\alpha-1} = v.$$
Therefore $v$ is a relative unit that is fixed by $\tau$.

We now need to ensure that the relative unit is non-trivial modulo $\ell^{th}$ powers.
If $v$ is trivial, then $v = w^{\ell^m}$ for some unit  $w \in K_n$ and some $m > 0$,
where we assume $m$ is maximal.
Then $$(\tau w)^{\ell^m} = \tau v = v = w^{\ell^m}.$$
Since $K_n$ does not contain $\ell^{th}$ roots of unity (since
we excluded $\ell=3$ when $K_0 = \QQ(\sqrt{-3})$), we have $\tau w = w$. Because $m$
was maximal, $w$ is not an $\ell^{th}$ power and therefore $w$ is non-trivial in $U_n$.
Since $$(w^{\alpha+1})^{\ell^m} = v^{\alpha+1} = 1$$ and $K_n$ does not contain the
$\ell^{th}$ roots of unity, $w^{\alpha+1}=1$ and so we have found a non-trivial relative
unit that is fixed by $\tau$.
\end{proof}

We also need the following result on the structure of the relative unit group.
\begin{proposition}\label{gens_relative_units}
There exists $u \in U_n$ such that the
relative units modulo $\ell^{th}$ powers, $U_n^{rel}$, are spanned by
$$\{u, \hspace{.1in} \sigma u, \hspace{.1in} \sigma^2 u,  \hspace{.1in} \ldots, \hspace{.1in} \sigma ^{2^{n-1}-1} u\}$$
and furthermore
$$\sigma^{2^{n-1}} u \equiv u^{-1} \mmod \ell^{th} \text{ powers}.$$
\end{proposition}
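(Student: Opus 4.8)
The plan is to show that $U_n^{rel} \cong \FF_\ell[\langle\sigma\rangle]/(\sigma^{2^{n-1}}+1)$ as modules over $R := \FF_\ell[\langle\sigma\rangle]$; both assertions of the proposition follow at once by taking $u$ to be the image of $1$, since then $u, \sigma u, \dots, \sigma^{2^{n-1}-1}u$ is an $\FF_\ell$-basis (the images of $1,\sigma,\dots,\sigma^{2^{n-1}-1}$), so in particular it spans $U_n^{rel}$, and $\sigma^{2^{n-1}}$ acts as $-1$, i.e.\ $\sigma^{2^{n-1}}u \equiv u^{-1}$ modulo $\ell$th powers. Write $\alpha = \sigma^{2^{n-1}}$, the generator of $\Gal(K_n/K_{n-1})$, so the relative norm is $N = 1+\alpha$ on units; any $v$ with $N(v)=1$ has $\alpha v = v^{-1}$, so $E_n^{rel} := \ker(N\colon E_n\to E_{n-1})$, and likewise $E_n^{rel}\otimes\ZZ_\ell$ and $U_n^{rel}$, are annihilated by $\alpha+1$, hence are modules over the corresponding quotient of the group ring by $(\sigma^{2^{n-1}}+1)$.

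\emph{Rational structure.} Since $\alpha$ fixes $K_{n-1}$, $N$ acts on $E_{n-1}$ as squaring, so $N\otimes\QQ\colon E_n\otimes\QQ \to E_{n-1}\otimes\QQ$ is surjective and, by Dirichlet, $\operatorname{rank} E_n^{rel} = (2^n-1)-(2^{n-1}-1) = 2^{n-1}$. But $\QQ[\sigma]/(\sigma^{2^{n-1}}+1) = \QQ[\sigma]/(\Phi_{2^n}(\sigma)) \cong \QQ(\zeta_{2^n})$ is a \emph{field} of degree exactly $2^{n-1}$ over $\QQ$, and $E_n^{rel}\otimes\QQ$ is a nonzero vector space of that dimension over it; hence $E_n^{rel}\otimes\QQ$ is free of rank one, i.e.\ $E_n^{rel}\otimes\QQ \cong \QQ(\zeta_{2^n})$ as $\QQ[\langle\sigma\rangle]$-module. (Equivalently, $\sigma$ acts on $E_n^{rel}\otimes\overline\QQ$ with each primitive $2^n$th root of unity as an eigenvalue, each exactly once.)

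\emph{Descent to characteristic $\ell$.} Put $L = E_n^{rel}\otimes\ZZ_\ell$ and $B = \ZZ_\ell[\sigma]/(\sigma^{2^{n-1}}+1) = \ZZ_\ell[\zeta_{2^n}]$. Because $\ell$ is odd, $\Phi_{2^n}$ is separable mod $\ell$, so $B$ is a finite product of discrete valuation rings, each unramified over $\ZZ_\ell$; and because $K_n$ has no $\ell$th roots of unity, the torsion of $E_n^{rel}$ has order prime to $\ell$, so $L$ is $\ZZ_\ell$-free, hence free over each DVR factor of $B$. Comparing with $L\otimes\QQ_\ell \cong \QQ(\zeta_{2^n})\otimes_\QQ\QQ_\ell = B\otimes_{\ZZ_\ell}\QQ_\ell$ (from the previous step) then forces $L\cong B$ as $B$-modules. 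Separately, $N\colon U_n\to U_{n-1}$ is surjective (again $N|_{U_{n-1}}$ is multiplication by $2$, invertible since $\ell\neq 2$), so by Nakayama $N\colon E_n\otimes\ZZ_\ell\to E_{n-1}\otimes\ZZ_\ell$ is surjective; as $L$, $E_n\otimes\ZZ_\ell$, $E_{n-1}\otimes\ZZ_\ell$ are all $\ZZ_\ell$-free, the sequence $0\to L\to E_n\otimes\ZZ_\ell\to E_{n-1}\otimes\ZZ_\ell\to 0$ splits and remains exact after $\otimes_{\ZZ_\ell}\FF_\ell$, identifying $U_n^{rel} = \ker(N\colon U_n\to U_{n-1})$ with $L/\ell L$. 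Therefore $U_n^{rel}\cong B/\ell B \cong \FF_\ell[\sigma]/(\Phi_{2^n}(\sigma)\bmod\ell) = \FF_\ell[\langle\sigma\rangle]/(\sigma^{2^{n-1}}+1)$, as required.

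The heart of the argument — and the step I expect to need the most care — is the multiplicity-one statement over $\QQ$: a bare dimension count in characteristic $\ell$ does \emph{not} suffice, since a $2^{n-1}$-dimensional $\FF_\ell[\langle\sigma\rangle]$-module killed by $\sigma^{2^{n-1}}+1$ need not be cyclic (its $\sigma$-characteristic polynomial, a degree-$2^{n-1}$ divisor of $x^{2^{n-1}}+1$, could repeat an irreducible factor). What rules this out is precisely that $E_n^{rel}\otimes\QQ$ is \emph{free of rank one} over the field $\QQ(\zeta_{2^n})$ — not merely of the correct dimension — and this survives reduction mod $\ell$ only because $\ell\nmid[K_n:\QQ]=2^{n+1}$, which is exactly what makes $B$ a product of discrete valuation rings. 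One must also verify that $U_n^{rel}$, defined through units modulo $\ell$th powers, genuinely equals the mod-$\ell$ reduction of $E_n^{rel}\otimes\ZZ_\ell$; this is handled by the split-exactness in the last step, using $\ZZ_\ell$-freeness together with the surjectivity of $N$. (One could instead try to take for $u$ the $\tau$-fixed relative unit produced in Corollary~\ref{relative_unit_fixed}, but checking that it generates $U_n^{rel}$ still requires the same multiplicity-one input, so this does not circumvent the main point.)
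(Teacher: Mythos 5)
Your proof is correct, and it takes a genuinely different route from the paper's. The paper first pins down the structure of all of $U_n$: it invokes Washington's Lemma 5.27 to produce a unit $\vare$ whose $\sigma$-orbit generates a finite-index subgroup $H$ of $E_n$, concludes $E_n\otimes_\ZZ\QQ\simeq\QQ[\langle\sigma\rangle]/(1+\sigma+\cdots+\sigma^{2^n-1})$, and then uses Brauer--Nesbitt together with semisimplicity (available since $\ell\nmid 2^n$) to transfer this cyclic-module structure to $U_n\simeq\FF_\ell[\langle\sigma\rangle]/(1+\sigma+\cdots+\sigma^{2^n-1})$; finally it identifies $U_n^{rel}=U_n^{1-\alpha}$ and takes $u$ to correspond to $1-\alpha$. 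You instead work only with the relative units: rationally, $E_n^{rel}\otimes\QQ$ is a module over the field $\QQ[\sigma]/(\sigma^{2^{n-1}}+1)\cong\QQ(\zeta_{2^n})$, and the rank count forces it to be free of rank one, so no Minkowski-type unit is needed; you then descend to $\FF_\ell$ by a lattice argument over $\ZZ_\ell[\zeta_{2^n}]$, which is a product of unramified discrete valuation rings because $\ell$ is odd, together with a split exact sequence identifying $\ker(N)$ on $U_n$ with $(\ker N)\otimes\FF_\ell$. Both arguments ultimately rest on the same two hypotheses, $\ell\nmid|\langle\sigma\rangle|=2^n$ and the absence of $\ell$-th roots of unity in $K_n$; yours trades the citations of Lemma 5.27 and Brauer--Nesbitt for somewhat more commutative algebra, and it correctly isolates the two points (multiplicity one over $\QQ$, and the compatibility of ``kernel of the norm'' with reduction mod $\ell$) that a bare mod-$\ell$ dimension count would miss. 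The only thing you give up is that the paper's intermediate isomorphism describes all of $U_n$ rather than just $U_n^{rel}$, but that stronger statement is not needed for this proposition.
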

\begin{proof}
The Galois group $\Gal(K_n/\QQ)$ has generators $\sigma$ and $\tau$, where
$\tau$ is complex conjugation under some embedding. Then $(\sigma^j, \tau \sigma^j)$ are the
pairs of complex conjugate embeddings of $K_n$ into $\CC$ for $0 \leq j < 2^n$.
By \cite{washington2012introduction}*{Lemma 5.27}, there exists a unit $\vare \in E_n$ such that
$$\{\vare, \hspace{.1in} \sigma \vare, \hspace{.1in} \sigma^2 \vare, \ldots, \hspace{.1in} \sigma ^{2^{n}-2} \vare\}$$
generates a subgroup $H$ of finite index in $E_n$. Therefore, since
$$\vare^{1 + \sigma + \ldots + \sigma^{2^n-1}} \in \{ \pm 1, \pm i, \pm \zeta_3 \},$$
we have
$$E_n \otimes_\ZZ \QQ \simeq \QQ[\langle \sigma \rangle] \big/ (1+\sigma+\ldots+\sigma^{2^n-1}).$$

The units modulo $\ell^{th}$ powers are $$U_n = E_n \big/ E_n^\ell \simeq E_n \otimes_\ZZ \FF_\ell.$$ Let $g \in \langle \sigma \rangle$.
Since $H$ is $g$-stable, we can calculate the characteristic polynomial $f_g(x)$ of $g$ using $H$. Since $H$ is a
$\ZZ$-module, $f_g(x) \in \ZZ[x]$. We can also compute $f_g(x)$ using a basis for $E_n$ modulo torsion,
and since $H$ and $E_n$ span $E_n \otimes \QQ$, they yield the same characteristic polynomials. We can reduce
$f_g(x)$ modulo $\ell$ to get the characteristic polynomial of $g$ on $U_n$ and on $H \otimes_\ZZ \FF_\ell$.

The Brauer-Nesbitt Theorem says that the semi-simplification is determined by the characteristic polynomials
of $g \in \langle \sigma \rangle$. Observe that $| \langle \sigma \rangle | = 2^n$ is relatively prime to $\ell$,
the characteristic of $\FF_\ell$. Therefore the representations of $\langle \sigma \rangle$ on $U_n$ and on
$H \otimes_\ZZ \FF_\ell$ are already semi-simple and so these representations are isomorphic:
\begin{align}\label{brauernesbittiso}
U_n \simeq H \otimes_\ZZ \FF_\ell \simeq \FF_\ell[\langle \sigma \rangle] \Big / (1+\sigma+\ldots+\sigma^{2^n-1}).
\end{align}

Now let $\alpha = \sigma^{2^{n-1}}$. The norm map is $$N_{K_n/K_{n-1}} = 1 + \alpha.$$
Let $u \in U_n^{rel}$. Then $$u^2 = u^{1+\alpha} u^{1-\alpha} = u^{1-\alpha}$$
since $u^{1+\alpha}=1$. Since $U$ has odd order ($\ell \neq 2$), this implies
that $U_n^{rel}$ is contained in $(U_n^{rel})^{1-\alpha} \subseteq (U_n)^{1-\alpha}$.
Now consider $u' \in U_n$. Then $$((u')^{1-\alpha})^{1+\alpha} = (u')^{1-\alpha^2} = 1$$
and therefore $U_n^{1-\alpha} \subseteq U_n^{rel}$. Thus $U_n^{rel} = U_n^{1-\alpha}$.
Let $u \in U_n^{rel}$ correspond to $1-\alpha$ by the isomorphism in Equation
\ref{brauernesbittiso}.
Therefore $U_n^{rel}$ is spanned by
$$\{u, \hspace{.1in} \sigma u, \hspace{.1in} \sigma^2 u, \ldots, \hspace{.1in} \sigma ^{2^{n-1}-1} u\}$$
and
$$\sigma^{2^{n-1}} u \equiv u^{-1} \mmod \ell^{th} \text{ powers}.$$
\end{proof}

Finally, we have the following proposition
showing that the only `new' units introduced at the $n^{th}$ layer are the relative units.
\begin{proposition}\label{unitsdirectsum}
$U_n \simeq U_n^{rel} \oplus U_{n-1}$ as $\Gal(K_n/\QQ)$-modules.
\end{proposition}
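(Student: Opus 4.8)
The plan is to exhibit the decomposition at the level of $\FF_\ell[\langle\sigma\rangle]$-modules, using the explicit description of $U_n$ from Proposition \ref{gens_relative_units} together with the fact that the norm map $N_{K_n/K_{n-1}} = 1+\alpha$ (with $\alpha = \sigma^{2^{n-1}}$) splits. First I would observe that from Equation \ref{brauernesbittiso} we have $U_n \simeq \FF_\ell[\langle\sigma\rangle]/(1+\sigma+\cdots+\sigma^{2^n-1})$, and similarly $U_{n-1} \simeq \FF_\ell[\langle\sigma^2\rangle]/(1+\sigma^2+\cdots+\sigma^{2^n-2})$, since $\Gal(K_{n-1}/\QQ)$ is generated by the image of $\sigma$, which has order $2^{n-1}$. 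The inclusion $U_{n-1} \hookrightarrow U_n$ is realized by the transfer/inclusion of units, and I would identify its image inside $U_n$ with the submodule fixed by $\alpha$, i.e. $U_n^{\alpha=1}$, equivalently the image of the norm map $1+\alpha$.

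Next I would show $U_n = U_n^{rel} \oplus U_n^{\alpha=1}$ as $\FF_\ell[\langle\sigma\rangle]$-modules. We already know from the proof of Proposition \ref{gens_relative_units} that $U_n^{rel} = U_n^{1-\alpha} = \ker(1+\alpha)$. Since $\alpha^2 = 1$ and $2$ is invertible in $\FF_\ell$, the idempotents $e_+ = \tfrac{1+\alpha}{2}$ and $e_- = \tfrac{1-\alpha}{2}$ give a direct sum decomposition $U_n = e_+ U_n \oplus e_- U_n = U_n^{\alpha=1} \oplus U_n^{\alpha=-1}$, and these idempotents are central in $\FF_\ell[\langle\sigma\rangle]$ because $\alpha = \sigma^{2^{n-1}}$ is central (the group $\langle\sigma\rangle$ is abelian), so both summands are $\langle\sigma\rangle$-stable. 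Then I must check $\ker(1+\alpha) = \operatorname{im}(1-\alpha)$ and $U_n^{\alpha=1} = e_+U_n$ agrees with the image of $U_{n-1}$; the first is the idempotent computation just given, and the second follows because $e_+U_n$ is exactly the submodule on which $\alpha$ acts trivially, which is precisely where units pulled back from the index-$2$ subgroup $\langle\sigma^2\rangle$-quotient live. Finally I would promote this to an isomorphism of $\Gal(K_n/\QQ)$-modules by checking $\tau$-equivariance: $\tau$ commutes with $\alpha = \sigma^{2^{n-1}}$ (from $\tau\sigma^i = \sigma^{-i}\tau$ with $i = 2^{n-1}$), hence $\tau$ preserves $e_\pm$ and therefore preserves both summands, so the splitting is $D_n$-equivariant.

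The main obstacle I anticipate is the precise identification of the image of $U_{n-1}$ in $U_n$ with $U_n^{\alpha=1}$. A priori the natural map $U_{n-1} \to U_n$ induced by inclusion of unit groups need not be injective after reducing mod $\ell$-th powers (a unit of $K_{n-1}$ could become an $\ell$-th power in $K_n$), so I would want to argue either via the rational representation isomorphism $E_n\otimes\QQ \simeq \QQ[\langle\sigma\rangle]/(1+\sigma+\cdots+\sigma^{2^n-1})$ — under which the $\alpha$-fixed part matches $E_{n-1}\otimes\QQ$ by a dimension and character count, the fixed subspace having dimension $2^{n-1}-1 = \operatorname{rank} E_{n-1}$ — and then reduce mod $\ell$ using that $\ell \nmid 2^n = |\langle\sigma\rangle|$ keeps everything semisimple, exactly as in the proof of Proposition \ref{gens_relative_units}. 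The cleanest route is probably to avoid the unit-theoretic map entirely: define the right-hand $U_{n-1}$ as the abstract $\FF_\ell[\langle\sigma\rangle/\langle\alpha\rangle]$-module $\FF_\ell[\langle\bar\sigma\rangle]/(1+\bar\sigma+\cdots+\bar\sigma^{2^{n-1}-1})$, observe it is canonically isomorphic to $U_{n-1}$ by Equation \ref{brauernesbittiso} applied at level $n-1$, and match it with $e_+U_n$ by the idempotent decomposition above; then the only thing to verify is that $e_+U_n \cong \FF_\ell[\langle\sigma\rangle]e_+ \big/ (1+\cdots+\sigma^{2^n-1})e_+$ simplifies to the level-$(n-1)$ module, which is a direct computation with group-ring idempotents.
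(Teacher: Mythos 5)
Your first step --- splitting $U_n$ into the $\pm 1$ eigenspaces of $\alpha=\sigma^{2^{n-1}}$ --- is essentially the paper's, just phrased with the idempotents $e_\pm=\tfrac{1\pm\alpha}{2}$ instead of the identity $u^2=u^{1-\alpha}u^{1+\alpha}$; both use only that $2$ is invertible and $\alpha^2=1$. Where you genuinely diverge is in identifying $e_+U_n$ with $U_{n-1}$: the paper does this with the explicit map $\psi\colon x^{1+\alpha}\bmod E_n^\ell\mapsto x^{1+\alpha}\bmod E_{n-1}^\ell$ and verifies well-definedness, surjectivity and injectivity by hand, the key input being that $K_n$ has no nontrivial $\ell$-th roots of unity --- which is exactly what resolves the obstacle you flag (if $v^\ell\in E_{n-1}$ with $v\in E_n$, then $(v^\alpha/v)^\ell=1$ forces $v^\alpha=v$, so $E_{n-1}\cap E_n^\ell=E_{n-1}^\ell$ and the map $U_{n-1}\to U_n$ is injective). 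Your representation-theoretic route is a legitimate alternative and gives a cleaner picture of the $\sigma$-structure, but two points need to be supplied before it closes. First, your ``cleanest route'' through the abstract group ring only yields an isomorphism of $\FF_\ell[\langle\sigma\rangle]$-modules, because Equation \ref{brauernesbittiso} is only a $\langle\sigma\rangle$-module statement; since the proposition asserts a $\Gal(K_n/\QQ)$-isomorphism, you must ultimately route through a concrete $D_n$-equivariant map (the inclusion $E_{n-1}\hookrightarrow E_n$ or the norm $1+\alpha$) and then use your dimension count $\dim e_+U_n=2^{n-1}-1=\dim U_{n-1}$ to upgrade surjectivity to an isomorphism. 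Second, the claim that $U_n^{\alpha=1}$ is exactly where the units from $K_{n-1}$ live requires the small computation that $u^\alpha\equiv u$ implies $u^2\equiv N_{K_n/K_{n-1}}u \in E_{n-1}\cdot E_n^\ell$. With those two points made explicit your argument works; the paper's hands-on verification of $\psi$ handles both issues simultaneously and for that reason is arguably more self-contained, at the cost of being less structural.
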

\begin{proof}
As in the previous proof, let $\alpha = \sigma^{2^{n-1}}$.
Then we can see that $U_n$ is generated by $U_n^{rel}$ and $U_n^{1+\alpha}$,
since $u^2 = u^{1-\alpha} u^{1+\alpha}$ and $u^{1-\alpha} \in U_n^{rel}$.
Furthermore, if \mbox{$u \in U_n^{rel} \cap U_{n}^{1+\alpha}$}, then $u = v^{1+\alpha}$
for some $v \in U_n$ and $N_{K_n/K_{n-1}} u = u^{1+\alpha} = 1.$ Then
    $$1 = u^{1+\alpha} = v^{2+2\alpha} = u^2$$ which implies that $u=1$.
Therefore we can prove the proposition by showing that $U_n^{1+\alpha} \simeq U_{n-1}$.

Recall that $U_n$ is the set of units modulo $\ell^{th}$ powers:
$$U_n = E_n / E_n^\ell.$$
We wish to show that $( E_n / E_n^\ell )^{1+\alpha} \simeq (E_{n-1} / E_{n-1}^\ell)$.
We claim that this is an isomorphism via the map
$$\psi: x^{1+\alpha} \mod E_n^\ell \mapsto x^{1+\alpha} \mod E_{n-1}^\ell.$$

First, we show that the map is well-defined. If $x^{1+\alpha} = v^\ell$ for some $v \in E_n$, then
$$(v^\ell)^\alpha = (x^{1+\alpha})^\alpha = x^{1+\alpha} = v^\ell$$
and therefore $v^\ell$ is fixed by $\alpha$. Since $K_n$ does not contain any non-trivial
$\ell^{th}$ roots of unity, this means that $v$ must be fixed by $\alpha$, and so
$v \in E_{n-1}$.

Now we show that the map is surjective. If $y \in E_{n-1}$, then
$\psi(y^{1+\alpha}) = y^2 \mod E_{n-1}^\ell$. Since $\ell$ is odd, the map is surjective.

Finally we show injectivity. Let $x^{1+\alpha} = w^\ell$ for some $w \in E_{n-1}$.
Then $$(x^{1+\alpha})^{1+\alpha} = (x^{1+\alpha})^2$$ and so $(x^2/w^\ell)^{1+\alpha} = 1$.
This means that the square of the class of $x^{1+\alpha} \mod E_n$ is trivial, and therefore
the class itself must be trivial. Therefore $\psi$ is an isomorphism.
\end{proof}

\subsection{Galois Structure of the Primes of $K_n$}

We need to understand the behavior of the primes in the anti-cyclotomic extension.
\begin{lemma}[Hubbard-Washington {\cite{hubbardwashington2017}*{Lemma 1}}] \label{splitcompletely}
Let $K_0$ be an imaginary quadratic field and
let $K_\infty/K_0$ be the anti-cyclotomic $\ZZ_p$-extension of $K_0$.
If a prime $q \neq p$ is inert in $K_0/\QQ$ then $q$ splits completely in $K_\infty/K_0$.
\end{lemma}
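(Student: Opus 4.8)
The plan is to identify the decomposition group of a prime of $K_\infty$ above $q$ inside $\Gal(K_\infty/\QQ)$ and to exploit the ``dihedral'' shape this group is forced to have by the anti-cyclotomic condition.

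First I would pin down the local data. Since $q \neq p$ and any $\ZZ_p$-extension of a number field is unramified away from $p$, the prime $q$ is unramified in $K_\infty/K_0$; being inert, it is also unramified in $K_0/\QQ$, hence $q$ is unramified in $K_\infty/\QQ$. Fix a prime $\mathfrak{Q}$ of $K_\infty$ above $q$; its decomposition group $D$ in $G := \Gal(K_\infty/\QQ)$ is then topologically pro-cyclic, generated by a Frobenius element $\phi$ (the compatible inverse limit of the Frobenii at the finite layers $K_n/\QQ$). Because $q$ is inert in $K_0/\QQ$, restriction carries $D$ onto all of $\Gal(K_0/\QQ) = \{1,\tau\}$, so $\phi|_{K_0} = \tau$ is complex conjugation.

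Next I would invoke the definition of the anti-cyclotomic extension. Writing $\Gamma := \Gal(K_\infty/K_0) \cong \ZZ_p$, the statement that $K_\infty/K_0$ is anti-cyclotomic means precisely that $G = \Gamma \rtimes \langle\tau\rangle$ with $\tau$ acting on $\Gamma$ by inversion, $\tau\gamma\tau^{-1} = \gamma^{-1}$ for all $\gamma \in \Gamma$. Since $\phi|_{K_0} = \tau$, I may write $\phi = \gamma\tau$ for some $\gamma \in \Gamma$, where $\tau$ now denotes the order-$2$ lift of complex conjugation furnished by the semidirect-product splitting. Then
\[
\phi^2 = \gamma\tau\gamma\tau = \gamma(\tau\gamma\tau^{-1})\tau^2 = \gamma\gamma^{-1} = 1,
\]
so $D = \langle\phi\rangle$ has order $2$ and, since $\phi \notin \Gamma$, we get $D \cap \Gamma = \{1\}$.

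Finally I would read off the splitting behaviour. For each $n \geq 1$, the decomposition group of $\mathfrak{Q}\cap K_n$ over $\QQ$ is the image $D_n$ of $D$ in $\Gal(K_n/\QQ)$, generated by $\phi|_{K_n}$; this element restricts to the nontrivial $\tau$ on $K_0$, so it does not lie in $\Gal(K_n/K_0)$, and therefore the decomposition group of $\mathfrak{Q}\cap K_n$ over $K_0$, which is $D_n \cap \Gal(K_n/K_0)$, is trivial. Hence $q$ splits completely in $K_n/K_0$ for every $n$, i.e.\ in $K_\infty/K_0$. The only substantive ingredient is the generalized-dihedral structure of $G$, but that is exactly the defining feature of the anti-cyclotomic $\ZZ_p$-extension; granting it, the argument collapses to the observation that $\ZZ_p$ is torsion-free, so an order-$2$ element of $G$ lying outside $\Gamma$ meets $\Gamma$ trivially. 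I do not expect a real obstacle, only the need to cite carefully the standard facts that $\ZZ_p$-extensions are unramified outside $p$ and that decomposition groups of unramified primes are pro-cyclic.
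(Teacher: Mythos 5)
Your proof is correct. The paper itself gives no argument for this lemma---it is quoted verbatim from Hubbard--Washington \cite{hubbardwashington2017}*{Lemma 1}---and your reasoning (Frobenius at $q$ restricts to the nontrivial element of $\Gal(K_0/\QQ)$, the generalized dihedral structure forces any such element to square into the torsion of $\Gamma\cong\ZZ_p$ and hence to have order $2$, so the decomposition group meets $\Gamma$ trivially) is essentially the standard argument found in that reference.
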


We also require the following lemma relating the $\sigma$ action on the primes of $K_n$
to the $\tau$ action.

\begin{lemma}\label{fixedprimeprop}
Let $\mfp, \sigma \mfp, \ldots, \sigma^{2^n-1} \mfp$ be the primes
in $K_n$ lying over a rational prime $p$ that is inert in $K_0/\QQ$.
Let $\tau \mfp = \sigma^k \mfp$. Then $k$ is even if and only if there
is at least one prime lying over $p$ that is fixed by $\tau$.
\end{lemma}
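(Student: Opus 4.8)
The statement is about primes over an inert rational prime $p$ in $K_n$, where $\Gal(K_n/\QQ) \simeq D_n$ is dihedral of order $2^{n+1}$, generated by $\sigma$ (order $2^n$) and $\tau$ (order $2$), with $\tau\sigma^i = \sigma^{-i}\tau$. By Lemma \ref{splitcompletely}, $p$ splits completely in $K_n/K_0$, so there are $2^n$ primes over $p$ in $K_n$, and they form a single $\langle\sigma\rangle$-orbit (since $p$ is inert in $K_0/\QQ$, $\sigma$ acts transitively on the primes over $p$; here we use that $p$ is inert, hence undecomposed, in $K_0$). Write them as $\mfp, \sigma\mfp, \ldots, \sigma^{2^n-1}\mfp$. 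The decomposition group of $\mfp$ in $D_n$ has order $|D_n|/2^n = 2$; since the residue extension $K_0/\QQ$ at $p$ is degree $2$ and unramified, the decomposition group is generated by the Frobenius, which is the nontrivial element of $\Gal(K_0/\QQ)$ lifted — concretely it is $\sigma^{2^{n-1}}$ if $\sigma^{2^{n-1}}$ restricts to the nontrivial automorphism of $K_0$, or more carefully: $\Gal(K_n/K_0)$ is the unique cyclic subgroup $\langle\sigma\rangle$ of index $2$, so $\Gal(K_0/\QQ)$ is identified with $D_n/\langle\sigma\rangle$, and the decomposition group $D(\mfp)$ of order $2$ is generated by some reflection $\sigma^j\tau$ (a reflection, since it must have order $2$ and not lie in $\langle\sigma\rangle$, as $p$ is unramified and not split in $K_0$).

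**Key computation.**

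The plan is: from $\tau\mfp = \sigma^k\mfp$ deduce that the decomposition group $D(\mfp)$ is generated by $\sigma^k\tau$. Indeed $\sigma^k\tau$ fixes $\mfp$ because $(\sigma^k\tau)\mfp = \sigma^k(\tau\mfp) = \sigma^k\sigma^{-k}\mfp = \mfp$ — wait, one must be careful whether the action is a left action; let me instead argue: the stabilizer of $\mfp$ in $D_n$ has order $2$, and $\tau\mfp = \sigma^k\mfp$ says $\sigma^{-k}\tau$ fixes $\mfp$, so $D(\mfp) = \langle \sigma^{-k}\tau\rangle$. Now I want to know when \emph{some} prime $\sigma^m\mfp$ is fixed by $\tau$, i.e. $\tau\sigma^m\mfp = \sigma^m\mfp$, i.e. $\sigma^{-m}\tau\sigma^m$ fixes $\mfp$, i.e. $\sigma^{-m}(\sigma^m\tau) = \sigma^{-2m}\tau \in D(\mfp) = \langle\sigma^{-k}\tau\rangle$. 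Using the dihedral relation, $\sigma^{-2m}\tau = \sigma^{-k}\tau$ forces $\sigma^{k-2m} = 1$, i.e. $k \equiv 2m \pmod{2^n}$. Such an $m$ exists if and only if $k$ is even (since $2m$ runs over all even residues mod $2^n$, and $n \ge 1$ so $2^n$ is even). This is exactly the claim.

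**Anticipated obstacle.**

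The routine steps are the dihedral bookkeeping; the one genuine point requiring care is pinning down that $\langle\sigma\rangle = \Gal(K_n/K_0)$ and hence that $D(\mfp)$ really is generated by a reflection $\sigma^j\tau$ rather than by an element of $\langle\sigma\rangle$ — this is what makes "$\tau\mfp = \sigma^k\mfp$ for some $k$" meaningful and forces the decomposition group into the form used above. I would justify this by: $p$ inert in $K_0$ means its decomposition group in $\Gal(K_0/\QQ)$ is all of $\Gal(K_0/\QQ)$, while $p$ splits completely from $K_0$ to $K_n$ (Lemma \ref{splitcompletely}), so $D(\mfp) \cap \Gal(K_n/K_0) = 1$ and $D(\mfp)$ maps isomorphically onto $\Gal(K_0/\QQ)$, hence $D(\mfp)$ has order $2$ and is not contained in $\langle\sigma\rangle = \Gal(K_n/K_0)$, so its generator is of the form $\sigma^j\tau$. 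After that, the biconditional falls out of the congruence $k \equiv 2m \pmod{2^n}$ as above. I should also note the harmless sign issue ($\sigma^{-k}$ versus $\sigma^k$): since the set of even residues is closed under negation, it does not affect the parity statement.
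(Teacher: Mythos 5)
Your proof is correct and is essentially the paper's argument: both reduce the question of whether some $\sigma^m\mfp$ is fixed by $\tau$ to the solvability of $2m \equiv k \pmod{2^n}$, which holds precisely when $k$ is even. The paper simply carries out this computation directly (exhibiting $\sigma^{k/2}\mfp$ as the fixed prime when $k$ is even and deriving the parity contradiction when $k$ is odd), while your decomposition-group framing is extra scaffolding around the same congruence.
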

\begin{proof}
First, assume $k$ is even: $k = 2k'$. Then
$$\tau \sigma^{k'} \mfp = \sigma^{-k'} \tau \mfp = \sigma^{-k'+2k'} \mfp = \sigma^{k'} \mfp$$
and therefore $\sigma^{k'} \mfp$ is fixed by $\tau$.

Now assume $k$ is odd: $k = 2k'+1$. Then assume $\sigma^j \mfp$ is fixed by $\tau$: $\tau \sigma^j \mfp = \sigma^j \mfp$. But
$$\sigma^j \mfp = \tau \sigma^j \mfp = \sigma^{-j} \tau \mfp = \sigma^{-j+2k'+1} \mfp$$
which implies $$j \equiv -j+2k'+1 \mmod 2^n \implies 2k'+1 \equiv 2j \mmod 2^n.$$ This is a contradiction.
\end{proof}

From this point forward, we let our imaginary quadratic field $K_0$ be $\QQ(i)$
in order to simplify the already technical proofs. This choice has several important
consequences.
We will be interested in primes that are inert in $K_0$ (so that they split
completely in $K_\infty/K_0$). For other base fields, we would have different
congruence conditions determining which primes ramify in $L/\QQ$. We would also have
different congruence conditions determining which primes are fixed by elements of the
Galois group (see Proposition \ref{7fixprime19}). Finally, we will also see that the
choice of $K_0=\QQ(i)$ means that $K_1$ is a cyclotomic field; this is not true in general.

\begin{proposition}\label{7fixprime19}
Let $K_0 = \QQ(i)$.
Let $K_\infty/K_0$ be the anti-cyclotomic $\ZZ_2$-extension.
Let
$$\Gal(K_n/\QQ) \simeq D_n = \langle \sigma, \tau \rangle$$ where $\sigma$ has order $2^n$ and $\tau$ has order 2
and restricts to the generator of $\Gal(K_0/\QQ)$.
If $p \equiv 3 \text{ mod } 8$, then none of the primes above $p$ in $K_n$ are fixed by $\tau$.
If $p \equiv 7 \text{ mod } 8$, then at least one of the primes above $p$ in $K_n$ is fixed by $\tau$.
\end{proposition}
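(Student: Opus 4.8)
The plan is to descend from $K_n$ to $K_1$, exploit the fact recalled just before the statement that $K_1=\QQ(\zeta_8)$ is cyclotomic — so that decomposition groups above $p$ are controlled by $p \bmod 8$ — and then climb back up using Lemma~\ref{fixedprimeprop}. First I would fix notation. Since $p\equiv 3$ or $7\pmod 8$ we have $p\equiv 3\pmod 4$, so $p$ is inert in $K_0/\QQ$. Let $\mfp$ be a prime of $K_n$ above $p$ and set $\mfp_1=\mfp\cap K_1$; let $\mfp_0$ be the unique prime of $K_0$ below it. By Lemma~\ref{splitcompletely}, $\mfp_0$ splits completely in $K_\infty/K_0$, hence in $K_1/K_0$, so $K_1$ has exactly two primes above $p$. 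Since $p$ is odd it is unramified in $K_1/\QQ$, so the decomposition group $D(\mfp_1\mid p)\subseteq\Gal(K_1/\QQ)$ has order $[K_1:\QQ]/2=2$ and is generated by the Frobenius at $p$.

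Identifying $\Gal(K_1/\QQ)=\Gal(\QQ(\zeta_8)/\QQ)$ with $(\ZZ/8)^\times$, the Frobenius at $p$ is the class of $p\bmod 8$, so $D(\mfp_1\mid p)=\langle p\bmod 8\rangle$; explicitly this is $\{\overline 1,\overline 3\}$ when $p\equiv 3\pmod 8$ and $\{\overline 1,\overline 7\}$ when $p\equiv 7\pmod 8$. Under the same identification, $\sigma|_{K_1}$ generates $\Gal(K_1/K_0)=\Gal(\QQ(\zeta_8)/\QQ(\zeta_4))=\{\overline 1,\overline 5\}$, so $\sigma|_{K_1}=\overline 5$, which has order $2$; and $\tau|_{K_1}$ is the restriction of complex conjugation, hence is complex conjugation on the CM field $\QQ(\zeta_8)$, namely $\zeta_8\mapsto\zeta_8^{-1}$, i.e.\ $\tau|_{K_1}=\overline 7$. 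This last point is where it matters that $\tau$ is genuinely complex conjugation rather than merely some element of $\Gal(K_n/\QQ)$ restricting to the nontrivial element of $\Gal(K_0/\QQ)$: both $\overline 3$ and $\overline 7$ have the latter property, and only $\overline 7$ is complex conjugation.

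Now I would connect the two levels. Writing $\tau\mfp=\sigma^k\mfp$ as in Lemma~\ref{fixedprimeprop} and restricting this equality of primes to $K_1$ — using that $\tau$ and $\sigma$ preserve the Galois extension $K_1/\QQ$ and that $\sigma|_{K_1}$ has order $2$ — gives $\tau|_{K_1}\,\mfp_1=(\sigma|_{K_1})^{\,k\bmod 2}\,\mfp_1$. Since $\sigma|_{K_1}$ permutes the two primes of $K_1$ above $p$ simply transitively, it follows that $k$ is even if and only if $\tau|_{K_1}$ fixes $\mfp_1$, i.e.\ if and only if $\overline 7\in D(\mfp_1\mid p)=\langle p\bmod 8\rangle$, which happens exactly when $p\equiv 7\pmod 8$. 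Thus $k$ is even when $p\equiv 7\pmod 8$ and odd when $p\equiv 3\pmod 8$, and Lemma~\ref{fixedprimeprop} then yields both claims: for $p\equiv 7\pmod 8$ at least one prime above $p$ in $K_n$ is fixed by $\tau$, and for $p\equiv 3\pmod 8$ none is. The step I expect to require the most care is this reduction — verifying that the parity of $k$, which is defined up in $K_n$, is faithfully recorded by whether $\tau$ fixes $\mfp_1$ down in $K_1$ (this uses $[K_1:K_0]=2$, so that $\sigma|_{K_1}$ has order exactly $2$, and that $\tau$ maps $K_1$ to itself so that restriction is meaningful); the remainder is the standard dictionary between prime splitting and Frobenius in the cyclotomic field $\QQ(\zeta_8)$.
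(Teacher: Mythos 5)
Your proof is correct and follows essentially the same route as the paper: both reduce the question to the splitting behavior of $p$ in $K_1=\QQ(\zeta_8)$, which is governed by $p \bmod 8$, and transfer the answer to $K_n$ using Lemma~\ref{splitcompletely} and Lemma~\ref{fixedprimeprop}. The only cosmetic difference is that you phrase the level-one computation via Frobenius in $(\ZZ/8)^\times$ and treat both congruence classes uniformly through the parity of $k$, whereas the paper works with the quadratic subfield $\QQ(\sqrt{2})$ and handles the $3 \bmod 8$ case by a direct splitting argument.
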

\begin{proof}
For the anti-cyclotomic extension, $K_1 = \QQ(\zeta_8)$.  Let $F_n$ be the
fixed field of $\tau$. Then $F_1 = \QQ(\sqrt{2})$.
We are only interested in primes that are $3 \text{ mod } 4$, since those are the primes
that are inert in $\QQ(i)/\QQ$.

We have the following diagram of fields:

\begin{center}
\begin{tikzpicture}[node distance = 1.75cm, auto]
    \node (Q) at (0,0) {$\QQ$};
    \node (K0) at (2,.5) {$K_0$};
    \node (F1) at (0,1.5) {$F_1$};
    \node (K1) at (2,2) {$K_1$};
    \node (Fn) at (0,3.5) {$F_n$};
    \node (Kn) at (2,4) {$K_n$};
    \draw[-] (Q) to node [swap] {2} (K0);
    \draw[-] (F1) to node [swap] {2} (K1);
    \draw[-] (Fn) to node [swap] {2} (Kn);
    \draw[-] (Q) to node [swap] {2} (F1);
    \draw[-] (F1) to node [swap] {$2^{n-1}$} (Fn);
    \draw[-] (K0) to node [swap] {2} (K1);
    \draw[-] (K1) to node [swap] {$2^{n-1}$} (Kn);
\end{tikzpicture}
\end{center}
By Lemma \ref{splitcompletely}, primes that are inert in $K_0$ split
completely in $K_n/K_0$.
Primes that are $3 \mmod 8$ are inert in $F_1/\QQ$ and therefore must split completely
in $K_n/F_1$. This means there can be no primes in $K_n$ which are fixed by $\tau$.

Now let $\mfp \in K_n$, $n>1$, be a prime ideal lying over $p \in \QQ$, $p \equiv 7 \mmod 8$.
Then
$$N_{K_n/K_1} \mfp = \mfp^{1 + \sigma^2 + \sigma^4 + \ldots + \sigma^{2^n-2}} \coloneqq \mfq.$$
Since $p \equiv 7 \mmod 8$, $p$ splits in $F_1/\QQ$ and is inert in $K_1/F_1$.
Therefore $\tau \mfq = \mfq$.
Therefore $\tau \mfp  = \sigma^{2j} \mfp$ for some $j$ and so by Lemma
 \ref{fixedprimeprop}, there must be a prime in $K_n$ lying over $p$
that is fixed by $\tau$.
\end{proof}

\section{Cyclic Extensions of the Anti-Cyclotomic Extension}\label{sec:Ln}

Let $L$ be a cyclic degree $\ell$ number field where $\ell$ is an odd prime
that is not ramified in $L/\QQ$.
Let $K_0 = \QQ(i)$ and let $K_\infty/K_0$ be the anti-cyclotomic $\ZZ_2$-extension.
Then let $L_n = K_n L$ so that $L_\infty/L_0$ is also a $\ZZ_2$-extension.

\begin{center}
\begin{tikzpicture}[node distance = 1.55cm, auto]
    \node (Q) at (0,0) {$\QQ$};
    \node (K0) at (0,1.5) {$K_0$};
    \node (Kn) at (0,4.5) {$K_n$};
    \node (Kinfty) at (0,7) {$K_\infty$};
    \node (L) at (3,1) {$L$};
    \node (L0) at (3,2.5) {$L_0$};
    \node (Ln) at (3,5.5) {$L_n$};
    \node (Linfty) at (3,8) {$L_\infty$};
    \draw[-] (Q) to node [swap] {$2$} (K0);
    \draw[-] (K0) to node [swap] {$2^{n}$} (Kn);
    \draw[dashed] (Kn) to node [swap] {} (Kinfty);
    \draw[-] (Q) to node [swap] {$\ell$} (L);
    \draw[-] (L) to node [swap] {$2$} (L0);
    \draw[-] (L0) to node [swap] {$2^{n}$} (Ln);
    \draw[dashed] (Ln) to node [swap] {} (Linfty);
    \draw[-] (K0) to node [swap] {$\ell$} (L0);
    \draw[-] (Kn) to node [swap] {$\ell$} (Ln);
    \draw[-] (Kinfty) to node [swap] {$\ell$} (Linfty);
\end{tikzpicture}
\end{center}

We wish to study the $\ell$-class groups of $L_n$. One of our primary tools is
Chevalley's formula, given below.
\begin{theorem}[Chevalley's Formula, \cite{chevalleypaper}]\label{chevalley}
Let $L/K$ be a cyclic extension of number fields; $\Delta = \Gal(L/K)$; $n=[L:K]$; $C_L$ be the ideal class group of $L$; $h(K)$ be the class number of $K$; $e(L/K) = \prod_P e_P$ be the product over all primes $P$ of $K$, including archimedean ones, where $e_P$ is the ramification index of $P$ in $L/K$; and $E(L/K) = [E_K:E_K \cap N_{L/K}(L^\times)]$, where $E_K$ is the group of units of $K$. Then
\begin{align*}
 \big | C_L^\Delta \big | = \frac{h(K)\cdot e(L/K)}{n \cdot E(L/K)}.
\end{align*}
\end{theorem}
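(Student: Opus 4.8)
The plan is to compute $|C_L^\Delta|$, the order of the group of ambiguous ideal classes, where $\Delta=\Gal(L/K)=\langle\sigma\rangle$ is cyclic of order $n$, by running the long exact cohomology sequences attached to
\[
1\longrightarrow P_L\longrightarrow I_L\longrightarrow C_L\longrightarrow 1
\qquad\text{and}\qquad
1\longrightarrow E_L\longrightarrow L^\times\longrightarrow P_L\longrightarrow 1 ,
\]
where $I_L$ and $P_L$ are the groups of fractional and principal fractional ideals of $L$ and the second map on the right is $\alpha\mapsto\alpha\OO_L$. Throughout I would use Hilbert's Theorem~$90$ (so $H^1(\Delta,L^\times)=0$) and the $2$-periodicity of Tate cohomology for the cyclic group $\Delta$, and I would keep careful track of which groups in sight are finite: those built from $E_L$ and from $C_L$ are, while $L^\times$, $I_L$, $P_L$, and $K^\times/N_{L/K}(L^\times)$ are not.

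From the ideal sequence, using $H^1(\Delta,I_L)=0$ --- which holds because $I_L=\bigoplus_{\mathfrak p}\ZZ[\Delta/\Delta_{\mathfrak p}]$ is a permutation module, so Shapiro's lemma gives $H^1(\Delta,\ZZ[\Delta/\Delta_{\mathfrak p}])=\Hom(\Delta_{\mathfrak p},\ZZ)=0$ --- I would extract $|C_L^\Delta|=[I_L^\Delta:P_L^\Delta]\cdot|H^1(\Delta,P_L)|$, where $P_L^\Delta=P_L\cap I_L^\Delta$ is the group of ambiguous principal ideals. Next I would compute $[I_L^\Delta:P_L^\Delta]$ by interpolating $P_K\subseteq I_K\subseteq I_L^\Delta$ and $P_K\subseteq P_L^\Delta$: one has $[I_L^\Delta:I_K]=\prod_{\mathfrak p\text{ finite}}e_{\mathfrak p}$ (an ambiguous ideal above $\mathfrak p$ is a $\ZZ$-multiple of $\mathfrak P_1\cdots\mathfrak P_{g_{\mathfrak p}}$, whose $e_{\mathfrak p}$-th power is $\mathfrak p\OO_L$), $[I_K:P_K]=h(K)$, and the unit sequence together with Hilbert~$90$ gives $P_L^\Delta/P_K\cong H^1(\Delta,E_L)$; hence $[I_L^\Delta:P_L^\Delta]=h(K)\big(\prod_{\mathfrak p\text{ finite}}e_{\mathfrak p}\big)/|H^1(\Delta,E_L)|$. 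For $|H^1(\Delta,P_L)|$ I would continue the unit sequence one degree: since $H^1(\Delta,L^\times)=0$, the connecting map identifies $H^1(\Delta,P_L)$ with $\ker\big(H^2(\Delta,E_L)\to H^2(\Delta,L^\times)\big)$, which by periodicity is $\ker\big(E_K/N_{L/K}E_L\to K^\times/N_{L/K}(L^\times)\big)=\big(E_K\cap N_{L/K}(L^\times)\big)/N_{L/K}E_L$, of order $[E_K:N_{L/K}E_L]/E(L/K)$.

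Assembling the pieces gives
\[
|C_L^\Delta|=\frac{h(K)\,\prod_{\mathfrak p\text{ finite}}e_{\mathfrak p}}{E(L/K)}\cdot\frac{[E_K:N_{L/K}E_L]}{|H^1(\Delta,E_L)|},
\]
so the final input is the unit Herbrand quotient $Q(\Delta,E_L)=|H^2(\Delta,E_L)|/|H^1(\Delta,E_L)|=[E_K:N_{L/K}E_L]/|H^1(\Delta,E_L)|$. I would compute this by passing to $E_L\otimes\QQ$, which the logarithmic embedding realizes (by Dirichlet's unit theorem) as the kernel of the sum map $\QQ[S_\infty]\to\QQ$ on the permutation $\Delta$-module $\QQ[S_\infty]\cong\bigoplus_{v\mid\infty}\QQ[\Delta/\Delta_v]$ indexed by the archimedean places of $L$; multiplicativity of $Q$ in exact sequences, the fact that $Q(M)$ depends only on $M\otimes\QQ$, Shapiro's lemma, $Q(\Delta,\QQ[\Delta/\Delta_v])=|\Delta_v|=e_v$, and $Q(\Delta,\ZZ)=n$ then yield $Q(\Delta,E_L)=\big(\prod_{v\mid\infty}e_v\big)/n$. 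Substituting and using $e(L/K)=\prod_{\text{all }v}e_v$ gives $|C_L^\Delta|=h(K)\,e(L/K)/\big(n\,E(L/K)\big)$, as claimed.

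I expect the Herbrand quotient step to be the main obstacle: it is exactly where the archimedean ramification indices enter $e(L/K)$, and making it rigorous needs the logarithmic embedding together with care about the normalization of absolute values at complex places and about the fact that $\sigma$ permutes the archimedean places of $L$ rather than fixing them individually. A second, more clerical difficulty --- already flagged above --- is that several modules in play are infinitely generated, so at each stage one must check that the indices and cohomology groups being multiplied are finite before the bookkeeping is legitimate.
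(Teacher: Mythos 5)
The paper does not prove this statement: it quotes Chevalley's formula as a known result with a citation, so there is no in-paper argument to compare against. Your proposal is the standard cohomological proof of the ambiguous class number formula, and it is correct as outlined. All the key identifications check out: $H^1(\Delta,I_L)=0$ because $I_L$ is a permutation module; the resulting splitting $|C_L^\Delta|=[I_L^\Delta:P_L^\Delta]\cdot|H^1(\Delta,P_L)|$; the index computation $[I_L^\Delta:I_K]=\prod_{\mathfrak p}e_{\mathfrak p}$ over finite primes together with $P_L^\Delta/P_K\cong H^1(\Delta,E_L)$ from Hilbert 90; the identification of $H^1(\Delta,P_L)$ with $\bigl(E_K\cap N_{L/K}(L^\times)\bigr)/N_{L/K}E_L$ via periodicity; and the Herbrand quotient $Q(\Delta,E_L)=\bigl(\prod_{v\mid\infty}e_v\bigr)/n$, which is exactly how the archimedean factors enter $e(L/K)$. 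The two points you flag as needing care are indeed the right ones: the finiteness bookkeeping (note $H^1(\Delta,P_L)$ is finite because it injects into $H^2(\Delta,E_L)$, and $[I_L^\Delta:P_L^\Delta]$ is finite because it is sandwiched between the other finite indices), and the equivariance of the logarithmic embedding ($\log\lvert\sigma u\rvert_{\sigma w}=\log\lvert u\rvert_w$, plus the irrelevance of the finite torsion subgroup $\mu_L$ to the Herbrand quotient). Filling in those routine verifications yields a complete proof.
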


Let $A_{n}$ be the $\ell$-class group of $L_n$ and let $h_\ell(K_n)$
be the $\ell$-part of the class number of $K_n$.
Assume there are $t$ rational primes that ramify in $L/\QQ$ and that they
are all inert in $K_0/\QQ$. Therefore there are $2^n t$ primes in $K_n$
that ramify in $L_n/K_n$ (Lemma \ref{splitcompletely}).
So $$e(L_n/K_n) = \ell^{2^n t}.$$
Letting $h_\ell(K_n)$ be the $\ell$-class number of $K_n$,
 $$\big | A_{n}^\Delta \big | = \frac{h_\ell(K_n)\cdot \ell^{2^n t -1}}{E(L_n/K_n)}.$$
As discussed in \S \ref{sec:Kn}, there are $2^n-1$ fundamental units in $K_n$ and no
non-trivial $\ell^{th}$ roots of unity.
Therefore $$1 \leq E(L_n/K_n) \leq \ell^{2^n-1}.$$
So we know immediately that
$$\big | A_{n} \big | \geq \big | A_{n}^\Delta \big | \geq \ell^{2^n t -2^n}.$$
For $t > 1$, we therefore can see that the order of the $\ell$-class group grows with $n$.

This is the motivation for this paper. We've shown that there exist $\ZZ_2$-extensions
where the $\ell$-part of the class number is unbounded using a trivial estimate
for the unit norm index.
But in order to understand the causes of such examples, we need to
study the unit norm index more closely.

\section{Norms of Units}\label{sec:norms}

We now address the behavior of the unit norm index $E(L_n/K_n)$.

Let $L_n/K_n$ be a cyclic prime extension of degree $\ell$.
We have the following proposition which tells us that we only need to worry about
the `new units' at each stage.

\begin{lemma}\label{normprop}
For $n\geq 1, m\geq 1$, let $u$ be a unit in $K_n$ regarded as a unit in $K_{n+m}$.
Then $u$ is the norm of an element in $L_{n+m}$ if and only if it is the norm of
an element in $L_n$.
\end{lemma}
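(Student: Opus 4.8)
The plan is to prove the two implications separately; the only real content is a norm-compatibility computation, and the key numerical input is that $2^m$ is invertible modulo $\ell$.

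First I would record the relevant Galois theory. Since $L/\QQ$ is cyclic (hence Galois) and $K_n/\QQ$ is Galois, the compositum $L_n = LK_n$ is Galois over $\QQ$, so $L_n$ is stable under every automorphism in $\Gal(L_{n+m}/\QQ)$. Because $[L:\QQ]=\ell$ is an odd prime while $[K_{n+m}:\QQ]=2^{n+m+1}$, we get $L\cap K_{n+m}=\QQ$; in particular $L\cap K_n=\QQ$, so $[L_n:K_n]=\ell$, and $L_n\cap K_{n+m}$ is a field between $K_n$ and $L_n$ --- but $[L_n:K_n]=\ell$ is prime, so $L_n\cap K_{n+m}=K_n$. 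Hence each $g\in\Gal(L_{n+m}/K_{n+m})$ maps $L_n$ into itself and fixes $K_n$, giving a homomorphism $\Gal(L_{n+m}/K_{n+m})\to\Gal(L_n/K_n)$; it is injective because an element fixing both $L_n$ and $K_{n+m}$ fixes $L_{n+m}=L_nK_{n+m}$, and since both groups have order $\ell$ it is an isomorphism.

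Given this, the direction ``$u$ is a norm from $L_n$ $\Rightarrow$ $u$ is a norm from $L_{n+m}$'' is immediate: if $u=N_{L_n/K_n}(\beta)$ with $\beta\in L_n^\times$, then, as the restrictions $g|_{L_n}$ of the elements $g\in\Gal(L_{n+m}/K_{n+m})$ run exactly over $\Gal(L_n/K_n)$,
$$N_{L_{n+m}/K_{n+m}}(\beta)=\prod_{g\in\Gal(L_{n+m}/K_{n+m})}g(\beta)=\prod_{h\in\Gal(L_n/K_n)}h(\beta)=N_{L_n/K_n}(\beta)=u.$$
For the converse, suppose $u=N_{L_{n+m}/K_{n+m}}(\gamma)$ with $\gamma\in L_{n+m}^\times$, and set $\beta=N_{L_{n+m}/L_n}(\gamma)\in L_n^\times$. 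Transitivity of the norm gives
$$N_{L_n/K_n}(\beta)=N_{L_{n+m}/K_n}(\gamma)=N_{K_{n+m}/K_n}\bigl(N_{L_{n+m}/K_{n+m}}(\gamma)\bigr)=N_{K_{n+m}/K_n}(u)=u^{2^m},$$
the last equality because $u\in K_n$ and $[K_{n+m}:K_n]=2^m$. So $u^{2^m}\in N_{L_n/K_n}(L_n^\times)$; and $u^\ell\in N_{L_n/K_n}(L_n^\times)$ as well, since $N_{L_n/K_n}$ is the $\ell$-th power map on $K_n$. Choosing $a,b\in\ZZ$ with $2^m a+\ell b=1$ (possible since $\ell$ is odd), I get $u=(u^{2^m})^a(u^\ell)^b=N_{L_n/K_n}(\beta^a u^b)$, so $u$ is a norm from $L_n$. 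One may phrase this last step as: the image of $u$ in the $\FF_\ell$-vector space $E_{K_n}/\bigl(E_{K_n}\cap N_{L_n/K_n}(L_n^\times)\bigr)$ is annihilated by the invertible scalar $2^m$, hence vanishes.

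I expect the Galois-theoretic setup of the first paragraph --- in particular checking $L_n\cap K_{n+m}=K_n$ so that restriction is an isomorphism --- to be the fussiest part, but it is routine from degree counts. The rest is transitivity of norms plus one application of B\'ezout, with no anticipated obstacle.
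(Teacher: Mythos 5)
Your proposal is correct and follows essentially the same route as the paper: the key step in both is the computation $N_{K_{n+m}/K_n}\bigl(N_{L_{n+m}/K_{n+m}}(\gamma)\bigr)=u^{2^m}$ combined with the coprimality of $2^m$ and $\ell$ (the paper leaves the B\'ezout step and the ``trivial'' converse implicit, which you spell out). No gaps.
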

\begin{proof}
Suppose $u \in K_{n+m}$ is the norm of an element
$a \in L_{n+m}$: $$N_{L_{n+m}/K_{n+m}}(a) = u.$$
Then $$ N_{L_{n+m}/K_n} (a) = N_{K_{n+m}/K_{n}} N_{L_{n+m}/K_{n+m}} (a) = u^{2^m} .$$
Since $2^m$ is prime to $\ell$ and we have a degree $\ell$ extension, this implies $u$ is a norm.
Therefore $u$ is the norm of an element in $L_n$.
The converse is trivial.
\end{proof}

Recall that by the Hasse Norm Theorem, a unit $u \in K_n$ is the norm of
an element in $L_n$ if and only if it is a local norm at all primes that ramify
in $L_n/K_n$.
Therefore we can determine if the units of $K_n$ are norms from $L_n$
by constructing a matrix of norm residue symbols where the $(i,j)^{th}$ entry of the matrix
is the symbol $\left( \frac{u_j}{\mfp_i} \right)$.
We will use this matrix to prove the following theorem, our main result.
\begin{theorem}\label{allornothing} 
Let $n \geq 2$ and $\ell \equiv 3 \text{ or } 5 \mmod 8$.
Let $K_n$ be the $n^{th}$ layer in the anti-cyclotomic $\ZZ_2$-extension of $K_0 = \QQ(i)$.
Let $L_n/K_n$ be a cyclic degree $\ell$ extension that is a lift of a cyclic
degree $\ell$ extension $L/\QQ$.
Assume that the ramified primes in $L/\QQ$ are $3 \mmod 4$ and assume $\ell$ is unramified in $L/\QQ$.
Let $U_n^{rel}$ be the units in $K_n$ that
have norm 1 in $K_{n-1}$ modulo $\ell^{th}$ powers.
Then either all of the relative units in $K_n$ are norms of elements in $L_n$
or none of them are (or more precisely, the only
elements that are norms are $\ell^{th}$ powers).
\end{theorem}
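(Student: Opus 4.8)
The plan is to recast the claimed dichotomy as an \emph{irreducibility} statement for the Galois module $U_n^{rel}$. Let $W\subseteq U_n^{rel}$ be the subspace of classes represented by a unit of $K_n$ that is a norm from $L_n/K_n$. This is well defined modulo $\ell$th powers, since $w^\ell=N_{L_n/K_n}(w)$ for every unit $w\in K_n$, and it is clearly an $\FF_\ell$-subspace. The theorem says exactly that $W=0$ or $W=U_n^{rel}$, so it suffices to prove two things: that $W$ is a $\Gal(K_n/\QQ)$-submodule, and that $U_n^{rel}$ has no proper nonzero $\Gal(K_n/\QQ)$-submodule.

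For the first point, note that $L_n/\QQ$ is Galois, being the compositum of the Galois extensions $K_n/\QQ$ and $L/\QQ$, so every $\rho\in\Gal(K_n/\QQ)$ extends to an automorphism of $L_n$. Applying such a $\rho$ to a local norm relation at a prime $\mfp$ of $K_n$ yields a local norm relation at $\rho\mfp$, and $\Gal(K_n/\QQ)$ permutes the finitely many primes of $K_n$ ramified in $L_n/K_n$ (all lying over the rational primes $3\bmod 4$ dividing the conductor of $L/\QQ$). Since by the Hasse Norm Theorem a unit of $K_n$ is a global norm from $L_n$ iff it is a local norm at each such prime, we get $\rho W=W$.

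For the irreducibility of $U_n^{rel}$ I would invoke Proposition \ref{gens_relative_units}: as an $\FF_\ell[\langle\sigma\rangle]$-module, $U_n^{rel}\cong\FF_\ell[x]/(x^{2^{n-1}}+1)$ with $x$ acting as $\sigma$, while the braid relation $\tau\sigma^i=\sigma^{-i}\tau$ forces $\tau(\sigma^i v)=\sigma^{-i}(\tau v)$, i.e.\ $\tau$ acts semilinearly via the involution $x\mapsto x^{-1}$. Over $\FF_\ell$ the polynomial $x^{2^{n-1}}+1=\Phi_{2^n}(x)$ splits into $2^{n-1}/d$ distinct irreducibles of degree $d=\mathrm{ord}_{2^n}(\ell)$, and the hypothesis $\ell\equiv 3,5\bmod 8$ is precisely the assertion that, for $n\geq 3$, $\langle\ell\rangle$ is a subgroup of $(\ZZ/2^n\ZZ)^\times$ of order $2^{n-2}$ not containing $-1$ (the case $n=2$ is immediate since $\deg\Phi_4=2$). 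Hence there are at most two irreducible factors, and when there are two, say $f$ and $f^*$, they are reciprocal to one another but not self-reciprocal, because $-1\notin\langle\ell\rangle$ means $\zeta\mapsto\zeta^{-1}$ interchanges the two Frobenius orbits on the primitive $2^n$th roots of unity. If $\Phi_{2^n}$ is irreducible over $\FF_\ell$ then $U_n^{rel}$ is already simple over $\FF_\ell[\langle\sigma\rangle]$; if $\Phi_{2^n}=f f^*$ then $U_n^{rel}\cong\FF_\ell[x]/(f)\oplus\FF_\ell[x]/(f^*)$ and $\tau$, acting by $x\mapsto x^{-1}$, interchanges the two summands; in either case $U_n^{rel}$ has no proper nonzero $\Gal(K_n/\QQ)$-submodule, and the theorem follows.

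I expect the delicate point to be the factorization step: one must check that $\ell\equiv 3,5\bmod 8$ simultaneously forces at most two irreducible factors of $\Phi_{2^n}$ over $\FF_\ell$ \emph{and} that any two factors are swapped — not fixed — by the $\tau$-action; both can fail for $\ell\equiv 1,7\bmod 8$ (already at $n=3$ when $\ell\equiv 7\bmod 8$ the two quadratic factors are each self-reciprocal, so $\tau$ preserves them and the dichotomy breaks), which explains the congruence restriction. Concretely, $W$ is the kernel of the matrix of norm residue symbols $\left(\frac{u_j}{\mfp_i}\right)$ with columns running over the generators $\{u,\sigma u,\dots,\sigma^{2^{n-1}-1}u\}$ of $U_n^{rel}$ from Proposition \ref{gens_relative_units}, and the argument amounts to showing this matrix has rank $0$ or full column rank $2^{n-1}$; one could instead establish that directly by computing the symbols and using Proposition \ref{7fixprime19} to control the $\tau$-action on the rows.
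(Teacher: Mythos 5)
Your proposal is correct, and it reaches the conclusion by a genuinely different and more conceptual route than the paper. The paper proves the dichotomy by brute force: for each ramified rational prime it builds the $2^n\times 2^{n-1}$ matrix of norm residue symbols, splits into four cases according to $p\bmod 8$ and $\ell\bmod 8$ (using Proposition \ref{7fixprime19} and the $\tau$-fixed generator from Corollary \ref{relative_unit_fixed} to pin down the entries), recognizes skew-circulant blocks, and shows via the eigenvalue computations of Lemmas \ref{lemmafactor}, \ref{rankB} and \ref{rankB2} that each block is zero or of full rank. You replace all of this with two observations: the subgroup of norm classes is a $\Gal(K_n/\QQ)$-submodule (normality of $\Gal(L_n/K_n)$ in $\Gal(L_n/\QQ)$, or equivalently your Hasse-plus-permutation-of-primes argument), and $U_n^{rel}\cong\FF_\ell[x]/(x^{2^{n-1}}+1)$ is irreducible over the dihedral group algebra precisely because for $\ell\equiv 3,5\bmod 8$ the polynomial has at most two irreducible factors over $\FF_\ell$ and these are interchanged by $x\mapsto x^{-1}$ (since $-1\notin\langle\ell\rangle$ in $(\ZZ/2^n\ZZ)^\times$); your diagnosis of why $\ell\equiv 7\bmod 8$ fails matches the paper's $\ell=7$ counterexample. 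The underlying arithmetic input is the same --- the paper's eigenvector decomposition is exactly the Wedderburn decomposition of $\FF_\ell[x]/\Phi_{2^n}(x)$ --- but your argument is shorter, needs no case analysis on $p\bmod 8$, does not use Corollary \ref{relative_unit_fixed} or Proposition \ref{7fixprime19}, and in fact never uses the hypothesis that the ramified primes are $3\bmod 4$, so it proves the dichotomy for any lift of a cyclic degree-$\ell$ field. What the paper's computation buys in exchange is the explicit parametrization of the symbol matrix by $2^{n-2}$ independent entries per ramified prime, which is what feeds the probabilistic heuristics of \S\ref{sec:app}; your argument yields only the zero-or-full-rank dichotomy. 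One cosmetic nit: the parenthetical ``the case $n=2$ is immediate since $\deg\Phi_4=2$'' glosses over $\ell\equiv 5\bmod 8$, where $\Phi_4$ splits into two linear factors, but these are swapped by $x\mapsto -x=x^{-1}$, so your general mechanism covers it.
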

In order to prove this theorem, we need some results concerning skew circulant matrices.
We present these results in \S \ref{sec:skew} followed by the proof in \S \ref{sec:proof}.

\subsection{Skew Circulant Matrices}\label{sec:skew}
The matrices of norm residue symbols will turn out to have a special form:
they will be skew circulant matrices.

\begin{definition}\label{defskew}
A \emph{skew circulant} matrix is an $n \times n$ matrix of the form
$$\begin{pmatrix}
a_0 & a_1 & \ldots & a_{n-2} & a_{n-1} \\
-a_{n-1} & a_0 & \ldots & a_{n-3} & a_{n-2} \\
-a_{n-2} & -a_{n-1} & \ldots & a_{n-4} & a_{n-3} \\
\vdots & \vdots & \ddots & \vdots & \vdots \\
-a_2 & -a_3 & \ldots & a_0 & a_1\\
-a_1 & -a_2 & \ldots & -a_{n-1} & a_0\\
\end{pmatrix}$$
\end{definition}

The associated polynomial to a skew circulant matrix is
$$g(x) = a_0 + a_1 x + \ldots + a_{n-1} x^{n-1}.$$
Then for $0 \leq i \leq n-1$, each row of the matrix is given by $$x^i g(x) \mmod (x^n+1).$$

\begin{proposition}\label{negaeigen}
Let $\zeta$ be a solution to $x^n + 1 = 0$ ($\zeta$ is a $(2n)^{th}$ root of unity, not necessarily primitive).
Each $\zeta$ gives an eigenvector of a skew circulant matrix
$$(1,\zeta, \zeta^{2}, \ldots, \zeta^{n-1})$$
and the eigenvalues are $g(\zeta)$ where $g$ is the associated polynomial.
\end{proposition}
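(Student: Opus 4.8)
The plan is to verify directly that the claimed vector is an eigenvector by computing the action of a skew circulant matrix $M$ on $v = (1, \zeta, \zeta^2, \ldots, \zeta^{n-1})^T$ and showing that each coordinate of $Mv$ equals $g(\zeta)$ times the corresponding coordinate of $v$. The key organizing observation, already recorded after Definition \ref{defskew}, is that the $i$-th row of $M$ (indexing rows $0$ through $n-1$) is read off from the coefficients of the polynomial $x^i g(x) \bmod (x^n+1)$: if we write $x^i g(x) \equiv \sum_{j=0}^{n-1} b_{i,j} x^j \pmod{x^n+1}$, then the $(i,j)$ entry of $M$ is $b_{i,j}$. This is exactly the statement that negacyclic shifts of the coefficient vector (with sign change on wraparound) correspond to multiplication by $x$ modulo $x^n+1$.

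First I would make that correspondence precise: define the evaluation map $\mathrm{ev}_\zeta \colon \FF[x]/(x^n+1) \to \FF(\zeta)$ (or the relevant coefficient ring) sending $x \mapsto \zeta$; this is well-defined precisely because $\zeta^n + 1 = 0$, i.e. $x^n + 1$ lies in the kernel. Then the $i$-th coordinate of $Mv$ is $\sum_{j=0}^{n-1} b_{i,j} \zeta^j = \mathrm{ev}_\zeta\bigl(x^i g(x) \bmod (x^n+1)\bigr)$. Since $\mathrm{ev}_\zeta$ is a ring homomorphism and kills $x^n+1$, this equals $\zeta^i g(\zeta)$. Hence $Mv = g(\zeta)\, v$, which is exactly the assertion: $v$ is an eigenvector with eigenvalue $g(\zeta)$. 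I should also remark that $v \neq 0$ (its first coordinate is $1$), so it is genuinely an eigenvector.

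There is no real obstacle here; the only thing requiring a little care is bookkeeping the signs in Definition \ref{defskew} against the reduction $x^n \equiv -1$. Concretely, when $i + k \geq n$ the monomial $x^{i+k}$ in the expansion of $x^i g(x)$ contributes $a_k x^{i+k-n} \cdot (x^n \bmod (x^n+1)) = -a_k x^{i+k-n}$, and one checks this sign pattern matches the lower-triangular $-a_{\bullet}$ entries displayed in the matrix. If one prefers to avoid the polynomial language entirely, one can instead just expand the $i$-th row of $Mv$ as a sum, split it at the wraparound index, and substitute $\zeta^n = -1$ to collapse it to $\zeta^i g(\zeta)$; this is the same computation written out by hand. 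I would present the homomorphism argument as the main line since it is cleanest, and note the by-hand verification as the underlying content.
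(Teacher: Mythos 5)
Your proof is correct and takes essentially the same route as the paper: the paper simply writes out the matrix--vector product and collapses each row to $\zeta^i g(\zeta)$ using $\zeta^n=-1$, which is exactly your ``by-hand'' version. Packaging the computation through the evaluation map $x\mapsto\zeta$ on $\FF[x]/(x^n+1)$ is a clean way to organize the same verification, but it is not a different argument.
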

\begin{proof}
Consider an $n \times n$ skew circulant matrix.
Then
\begin{align*}
&\begin{pmatrix}
a_0 & a_1 & \ldots & a_{n-2} & a_{n-1} \\
-a_{n-1} & a_0 & \ldots & a_{n-3} & a_{n-2} \\
\vdots & \vdots & \ddots & \vdots & \vdots \\
-a_1 & -a_2 & \ldots & -a_{n-1} & a_0\\
\end{pmatrix}
\begin{pmatrix}1 \\ \zeta \\ \vdots \\  \zeta^{n-1}
\end{pmatrix} \\
&=
\begin{pmatrix}
a_0 + a_1 \zeta + \ldots + a_{n-2} \zeta^{n-2} + a_{n-1} \zeta^{n-1} \\
-a_{n-1} + a_0 \zeta + \ldots + a_{n-3} \zeta^{n-2} + a_{n-2} \zeta^{n-1} \\
\vdots \\
-a_1 - a_2 \zeta - \ldots - a_{n-1} \zeta^{n-2} + a_0 \zeta^{n-1} \\
\end{pmatrix} \\
&=
(a_0+a_1 \zeta + \ldots + a_{n-2}\zeta^{n-2} + a_{n-1} \zeta^{n-1})
\begin{pmatrix}1 \\ \zeta \\ \vdots \\  \zeta^{n-1}
\end{pmatrix}
\end{align*}
\end{proof}

\subsection{Proof of Theorem \ref{allornothing}}\label{sec:proof}
We're now ready to prove the theorem.

There are $2^{n-1}$ generators of the relative units modulo $\ell^{th}$ powers in $K_n$.
By Corollary \ref{relative_unit_fixed}, at least one
relative unit is fixed by $\tau$. By Proposition \ref{gens_relative_units},
we may choose $u$ to be a generator that is fixed by $\tau$ and then
$$\{u, \hspace{.1in} \sigma u, \hspace{.1in} \sigma^2 u, \ldots, \hspace{.1in} \sigma ^{2^{n-1}-1} u\}$$ is a set of generators for $U_n^{rel}$.
These are the units in $K_n$ that have norm 1 in $K_{n-1}$ modulo $\ell^{th}$ powers. By
Proposition \ref{unitsdirectsum} and Lemma
\ref{normprop}, these are the only units we need to consider, as we can assume we already know if
units from fields lower in the tower are norms.

A unit $u \in K_n$ is the norm of an element in $L_n$ if and only if its norm
residue symbols for each prime that ramifies in $L_n/K_n$ are trivial.
Let
$u_j = \sigma^j u$.
Observe that this implies
$$\tau u_j = \tau \sigma^j u = \sigma^{-j} \tau u = \sigma^{2^n-j} u = (\sigma^{2^{n-1}-j} u)^{-1} = u_{2^{n-1}-j}^{-1}$$
since $\sigma^{2^{n-1}} u = u^{-1}$ (Proposition \ref{gens_relative_units}).

We can consider each set of conjugate primes separately, so we'll first look
only at primes that all lie over the same rational prime $p$.
Write these primes as
$$\mfp_0 = \mfp, \hspace{.1in} \mfp_1 = \sigma \mfp, \hspace{.1in} \ldots, \hspace{.1in} \mfp_{2^n-1} = \sigma^{2^n-1} \mfp.$$
We know there are $2^n$ primes in $K_n$ lying over $p$,
using Lemma \ref{splitcompletely} and our assumption that $p \equiv 3 \mmod 4$.

We will make use of the following lemma:
\begin{lemma}\label{lemmafactor}
Let $n>2$.

If $\ell \equiv 3 \mmod 8$,
$$x^{2^{n-1}}+1 \equiv (x^{2^{n-2}}+\sqrt{-2}x^{2^{n-3}}-1)(x^{2^{n-2}}-\sqrt{-2}x^{2^{n-3}}-1) \mmod \ell.$$

If $\ell \equiv 5 \mmod 8$,
$$x^{2^{n-1}}+1 \equiv (x^{2^{n-2}}+\sqrt{-1})(x^{2^{n-2}}-\sqrt{-1}) \mmod \ell.$$
\end{lemma}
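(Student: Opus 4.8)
The plan is to prove both identities by a direct algebraic factorization over $\FF_\ell$: once the relevant square root is known to lie in $\FF_\ell$, each factorization is a one-line difference-of-squares computation. So the only real content is to check that $\sqrt{-1}$ (resp.\ $\sqrt{-2}$) exists in $\FF_\ell$ under the hypothesis $\ell \equiv 5$ (resp.\ $\ell \equiv 3$) modulo $8$, which I would dispatch with the supplementary laws of quadratic reciprocity. (One could instead note that $x^{2^{n-1}}+1 = \Phi_{2^n}(x)$ and reason about the order of $\ell$ in $(\ZZ/2^n\ZZ)^\times$, but this is not needed.)

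First I would treat the case $\ell \equiv 5 \pmod{8}$. Then $\ell \equiv 1 \pmod{4}$, so $-1$ is a quadratic residue modulo $\ell$; fixing $i \in \FF_\ell$ with $i^2 = -1$, we get
$$(x^{2^{n-2}}+i)(x^{2^{n-2}}-i) = x^{2^{n-1}} - i^2 = x^{2^{n-1}}+1,$$
which is exactly the asserted identity (and it is independent of the choice of $i$, since replacing $i$ by $-i$ only transposes the two factors).

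Next I would treat $\ell \equiv 3 \pmod{8}$. Here $\left(\frac{-1}{\ell}\right) = -1$ and $\left(\frac{2}{\ell}\right) = -1$, so $\left(\frac{-2}{\ell}\right) = 1$, and we may fix $j \in \FF_\ell$ with $j^2 = -2$. Since $n > 2$, the exponent $2^{n-3}$ is a positive integer, so I would substitute $y = x^{2^{n-3}}$, under which $x^{2^{n-2}} = y^2$ and $x^{2^{n-1}} = y^4$; the right-hand side then becomes
$$(y^2 - 1 + jy)(y^2 - 1 - jy) = (y^2-1)^2 - j^2 y^2 = (y^4 - 2y^2 + 1) + 2y^2 = y^4 + 1 = x^{2^{n-1}}+1,$$
as required.

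I do not expect a genuine obstacle: each case is a short computation plus one application of the supplementary laws. The two points to be careful about are (i) that the congruence classes $\ell \equiv 3$ and $\ell \equiv 5 \pmod{8}$ are precisely what guarantee the existence of the needed square roots — this is the sole role played by the hypothesis on $\ell$ — and (ii) the hypothesis $n > 2$, which is exactly what makes $x^{2^{n-3}}$ a genuine monomial in the $\ell \equiv 3$ case (for $n = 2$ the displayed right-hand side in that case would be meaningless).
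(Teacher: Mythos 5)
Your proof is correct and follows essentially the same route as the paper: the paper's own proof simply observes that $\sqrt{-2} \in \FF_\ell$ for $\ell \equiv 3 \pmod 8$ and $\sqrt{-1} \in \FF_\ell$ for $\ell \equiv 5 \pmod 8$ and calls the rest a straightforward computation. You have merely written out the details — the supplementary laws giving the existence of the square roots and the difference-of-squares expansions — which the paper leaves implicit.
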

\begin{proof}
Since $\sqrt{-2} \in \FF_\ell$ for $\ell \equiv 3 \mmod 8$ and
$\sqrt{-1} \in \FF_\ell$ for $\ell \equiv 5 \mmod 8$,
the proof is a straightforward computation.
\end{proof}

We'll address four cases independently: we treat  $\ell \equiv 3 \mmod 8$ and $\ell \equiv 5 \mmod 8$
separately, and we treat $p \equiv 7 \mmod 8$ and $p \equiv 3 \mmod 8$ separately.
\begin{description}[style=unboxed,leftmargin=0cm]
\item[$\bullet \hspace{1em} p \equiv 7 \mmod 8$, $\ell \equiv 3 \mmod 8$]
If $p \equiv 7 \mmod 8$, then by Proposition \ref{7fixprime19}
at least one of the primes above $p$ is fixed by $\tau$.
Let $\mfp$
be the fixed prime: $\tau \mfp = \mfp$. Then
$$\tau \sigma^j \mfp = \sigma^{-j} \tau \mfp = \sigma^{-j} \mfp.$$

Construct a matrix $(a_{ij})$ using the norm residue symbols:
\begin{align*}
\left(\frac{u_j}{\mfp_i}\right) \equiv u_j^{(N\mfp_i-1)/\ell} \equiv \omega^{a_{ij}} \mmod \mfp_i
\end{align*}
where $\omega$ is a fixed primitive $\ell^{th}$ root of unity. We extend $\tau$
and $\sigma$ such that $\omega$ is fixed by both.
Since each $\mfp_i$ is inert in $K_0/\QQ$ and totally split in $K_n/K_0$, $N\mfp_i = p^2$.
Since we're considering the powers of $\ell^{th}$ roots of unity,
the matrix entries $a_{ij}$ are in $\FF_\ell$.

If the matrix is trivial, then all of the norm residue symbols are trivial, and therefore
all of the units are norms modulo $\ell^{th}$ powers. If the matrix has full rank,
then none of the units are norms modulo $\ell^{th}$ powers.

First consider $\mfp_0 = \mfp$.
Write
$$\left( \frac{u_j}{\mfp} \right) = \omega^{a_j}.$$
For $0 \leq j \leq 2^{n-1}-1$,
apply $\tau$, using $\tau u_j = u_{2^{n-1}-j}^{-1}$, $\tau \omega = \omega$
and $\tau \mfp = \mfp$:
\begin{align*}
\omega^{a_j} =
\left( \frac{u_{j}}{\mfp} \right) =
\tau \left( \frac{u_{j}}{\mfp} \right) =
\left( \frac{u_{2^{n-1}-j}^{-1}}{\mfp} \right) =
\omega^{-a_{2^{n-1}-j}}
\implies a_j = -a_{2^{n-1}-j}.\end{align*}
We want to pay special attention to
what happens when $j = 2^{n-1}-j$, or $j = 2^{n-2}$.
Then  $$a_{2^{n-2}} = -a_{2^{n-2}} \implies a_{2^{n-2}} = 0.$$

We can use the above relations to write the first row of the matrix $(a_{ij})$ in terms
of $a_j$, $0 \leq j < 2^{n-2}$:
$$\begin{pmatrix}
a_0 & a_1 & \ldots & a_{2^{n-2}-1} & 0  & -a_{2^{n-2}-1} & \ldots & -a_1
\end{pmatrix}.$$

To get the next row, we can apply $\sigma$.
Then
$$\omega^{a_j} = \left( \frac{u_{j}}{\mfp} \right) = \sigma \left( \frac{u_{j}}{\mfp} \right) = \left( \frac{u_{j+1}}{\sigma \mfp} \right)$$
for $0 \leq j < 2^{n-1}-1$ and
$$\omega^{a_1} = \left( \frac{u^{-1}_{2^{n-1}-1}}{\mfp} \right) = \sigma \left( \frac{u^{-1}_{2^{n-1}-1}}{\mfp} \right) = \left( \frac{u_0}{\sigma \mfp} \right).$$

If we continue in this manner, we can construct
a $2^n \times 2^{n-1}$ matrix, where the top half and bottom half are both
skew circulant:
\begin{equation}\label{fixedprimematrix}
\begin{pmatrix}
a_0 & a_1 & \ldots & a_{2^{n-2}-1} & 0 & -a_{2^{n-2}-1} & \ldots & -a_1 \\
a_1 & a_0 & \ldots & a_{2^{n-2}-2} & a_{2^{n-2}-1} & 0 & \ldots & -a_2 \\
a_2 & a_1 & \ldots & a_{2^{n-2}-3} & a_{2^{n-2}-2} & a_{2^{n-2}-1} & \ldots & -a_3\\
\vdots & \vdots & \ddots & \vdots & \vdots & \vdots & \ddots & \vdots \\
-a_1 & -a_2 & \ldots & 0 & a_{2^{n-2}-1} & a_{2^{n-2}-2} & \ldots & a_0 \\
-a_0 & -a_1 & \ldots & -a_{2^{n-2}-1} & 0 & a_{2^{n-2}-1} & \ldots & a_1 \\
\vdots & \vdots & \ddots & \vdots & \vdots & \vdots & \ddots & \vdots \\
a_2 & a_3 & \ldots & -a_{2^{n-2}-1}  & -a_{2^{n-2}-2} & -a_{2^{n-2}-3} & \ldots & -a_1 \\
a_1 & a_2 & \ldots & 0 & -a_{2^{n-2}-1} & -a_{2^{n-2}-2} & \ldots & -a_0 \\
\end{pmatrix}
\end{equation}

If $n = 2$, we have the matrix
$$\begin{pmatrix}
a_0 & 0 \\
0 & a_0 \\
-a_0 & 0 \\
0 & -a_0
\end{pmatrix}$$
which has rank 0 if $a_0 = 0$ and rank $2$ if $a_0 \neq 0$.
This means that either both units are local norms at all four primes or
neither of them are.

Now we consider $n>2$.
By Proposition \ref{negaeigen}, the top half of the matrix has eigenvectors
$$(1, \zeta, \ldots, \zeta^{2^{n-1}-1})$$
where $\zeta$ is one of the $2^{n-1}$ distinct
solutions to $x^{2^{n-1}}+1=0$.
Note that $$[\FF_\ell(\zeta_{2^{n}}):\FF_\ell] = 2^{n-2}$$
for $\ell \equiv \pm 3 \mmod 8$.
The corresponding eigenvalue is
\begin{align*}
& a_0 + a_1 \zeta + \ldots + a_{2^{n-2}-1} \zeta^{2^{n-2}-1} - a_{2^{n-2}-1} \zeta^{2^{n-2}+1} - \ldots - a_1 \zeta^{2^{n-1}-1} \\
&= \sum_{j=0}^{2^{n-2}-1} a_j \zeta^j - \sum_{j=1}^{2^{n-2}-1} a_{2^{n-2}-j} \zeta^{2^{n-2}+j}.
\end{align*}
We want to show that the eigenvalues are all non-zero unless the matrix
identically zero.

By Lemma \ref{lemmafactor}, $\zeta^{2^{n-2}} \equiv \pm \sqrt{-2} \zeta^{2^{n-3}} + 1$.
Without loss of generality, we can assume $$\zeta^{2^{n-2}} \equiv \sqrt{-2} \zeta^{2^{n-3}} + 1$$
by choosing the sign of $\sqrt{-2}$.

Then
$$\zeta^{2^{n-2}+2^{n-3}} \equiv \sqrt{-2} \zeta^{2^{n-2}} + \zeta^{2^{n-3}} \equiv
  -\zeta^{2^{n-3}} + \sqrt{-2}.$$

We can use this to reduce the eigenvalue to lower terms.
\begin{align*}
& \sum_{j=0}^{2^{n-2}-1} a_j \zeta^j - \sum_{j=1}^{2^{n-2}-1} a_{2^{n-2}-j} \zeta^{2^{n-2}+j} \\
& = a_0 + a_{2^{n-3}} \zeta^{2^{n-3}} + \sum_{j=1}^{2^{n-3}-1} (a_j \zeta^j + a_{2^{n-3}+j} \zeta^{2^{n-3}+j})
\\
& \hspace{.1in} -a_{2^{n-3}} \zeta^{2^{n-2}+2^{n-3}} - \sum_{j=1}^{2^{n-3}-1} (a_{2^{n-2}-j} \zeta^{2^{n-2}+j} + a_{2^{n-3}-j} \zeta^{2^{n-2}+2^{n-3}+j}) \\
& = a_0  -\sqrt{-2} a_{2^{n-3}} + 2 a_{2^{n-3}} \zeta^{2^{n-3}}
+ \sum_{j=1}^{2^{n-3}-1} \big( (a_j - a_{2^{n-2}-j}  - \sqrt{-2} a_{2^{n-3}-j}) \zeta^j \big) \\
& \hspace{.2in} + \sum_{j=1}^{2^{n-3}-1} \big(  (a_{2^{n-3}+j} - \sqrt{-2} a_{2^{n-2}-j}  +  a_{2^{n-3}-j}) \zeta^{2^{n-3}+j} \big) \\
\end{align*}

The eigenvalue is a polynomial in $\zeta$ of degree $2^{n-2}-1$.
Therefore we can write the eigenvalue as a $2^{n-2} \times 2^{n-2}$ matrix, which we'll call $B$,
multiplied by the column vector $(a_0, a_1, \ldots, a_{2^{n-2}-1})$.
Row $j$ corresponds to the coefficient of $\zeta^j$.
We write $I$ to represent the identity matrix and $J$ to represent the anti-identity matrix.
The anti-identity matrix is all zero except for ones on the diagonal that goes from the
upper right corner to the lower left corner.

Then
$$B =
\left( \begin{array}{c|c|c|c} 1& & - \sqrt{-2} & \\
\hline & I - \sqrt{-2} J & & -J \\
\hline 0 & & 2 & \\
\hline & J & & I - \sqrt{-2} J
\end{array} \right)$$
where the $I$ and $J$ matrices are both of size $(2^{n-3}-1) \times (2^{n-3}-1)$.
The eigenvalue is zero if and only if the product of $B$ and
the column vector $(a_0, a_1, \ldots, a_{2^{n-2}-1})$ is zero.

\begin{lemma}\label{rankB}
The matrix
$$
\left(\begin{array}{c|c} I - \sqrt{-2} J & -J \\ \hline J & I - \sqrt{-2} J \end{array} \right)$$
has full rank.
\end{lemma}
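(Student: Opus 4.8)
The plan is to show that the $2^{n-2}\times 2^{n-2}$ block matrix
$$M = \left(\begin{array}{c|c} I - \sqrt{-2}\,J & -J \\ \hline J & I - \sqrt{-2}\,J \end{array}\right)$$
over $\FF_\ell$ (where $\ell\equiv 3\pmod 8$ so $\sqrt{-2}$ exists and, crucially, $-1$ is a nonsquare) is invertible. My first move is to exploit the block structure: since $J^2 = I$ (the anti-identity is an involution) and $J$ commutes with $I$ but \emph{anti}-commutes with nothing problematic here, the two diagonal blocks are equal and the off-diagonal blocks are $\pm J$. For a block matrix $\begin{pmatrix} A & B \\ C & A\end{pmatrix}$ with all blocks commuting, the determinant is $\det(A^2 - BC)$ — but here $A = I - \sqrt{-2}\,J$, $B = -J$, $C = J$, and all of $I$, $J$ commute with each other, so this formula applies cleanly. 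Thus I would reduce to showing $\det\big((I-\sqrt{-2}\,J)^2 + J^2\big) = \det\big((I - \sqrt{-2}\,J)^2 + I\big) \neq 0$.

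Next I would diagonalize in the $J$-eigenbasis. Since $J^2 = I$ and $\ell$ is odd, $J$ is diagonalizable with eigenvalues $\pm 1$; on the $\lambda$-eigenspace ($\lambda = \pm 1$) the matrix $(I - \sqrt{-2}\,J)^2 + I$ acts as the scalar $(1 - \sqrt{-2}\,\lambda)^2 + 1$. So full rank is equivalent to $(1-\sqrt{-2}\,\lambda)^2 \neq -1$ for $\lambda = \pm 1$, i.e. $1 \mp \sqrt{-2}\,\lambda\,\text{(expand)}\ldots$ more directly, $(1-\sqrt{-2}\lambda)^2 = 1 - 2\sqrt{-2}\lambda - 2 = -1 - 2\sqrt{-2}\lambda$, so we need $-1 - 2\sqrt{-2}\lambda \neq -1$, i.e. $\sqrt{-2}\,\lambda \neq 0$, which holds since $-2 \not\equiv 0 \pmod \ell$ (as $\ell$ is odd and $\ell \neq$ relevant small primes). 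Wait — I should double-check the arithmetic, but the point is that the condition reduces to a concrete non-vanishing statement about $\sqrt{-2}$ in $\FF_\ell$ that is immediate from $\ell$ being an odd prime. One subtlety: the eigenspaces of $J$ on an odd-dimensional-versus-even-dimensional space have possibly unequal dimensions, but this does not matter since I only need that the relevant scalars are nonzero on \emph{both} eigenspaces.

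The main obstacle I anticipate is bookkeeping around the exact sizes: the blocks $I, J$ are $(2^{n-3}-1)\times(2^{n-3}-1)$, and the ambient matrix $B$ in the paper has extra border rows/columns (the "$1$", "$-\sqrt{-2}$", "$0$", "$2$" entries), so I must be careful that Lemma \ref{rankB} is exactly the claim about the \emph{central} block and keep the indexing honest — in particular that $J$ here really does square to the identity on a space of that dimension (it does, as the anti-identity always does). A secondary point is making sure the "all blocks commute" hypothesis is genuinely satisfied: $I$ commutes with everything and $J$ commutes with itself, so the only blocks appearing are polynomials in $J$, and the determinant formula $\det\begin{pmatrix}A&B\\C&A\end{pmatrix} = \det(A-B)\det(A+B)$ (valid when $C=B$... here $C = -B$, giving $\det(A^2 - BC) = \det(A^2+B^2)$) applies. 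Once the reduction to the scalar condition is in place, the conclusion is a one-line check, so I expect the proof to be short; the care is entirely in setting up the block-diagonalization correctly.
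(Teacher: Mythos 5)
Your proof is correct, but it takes a genuinely different route from the paper's. The paper proceeds by explicit block row operations (block Gaussian elimination), reducing the matrix to block upper triangular form whose pivot blocks are $I$ and $-2\sqrt{-2}\,I$; you instead invoke the commuting-block determinant formula $\det\left(\begin{smallmatrix} A & B \\ C & D\end{smallmatrix}\right) = \det(AD-BC)$, which applies because all four blocks lie in the commutative algebra $\FF_\ell[J]$, and then use $J^2=I$ to reduce everything to the scalar non-vanishing $(1-\sqrt{-2}\,\lambda)^2+1 = -2\sqrt{-2}\,\lambda \neq 0$ for $\lambda=\pm1$. (In fact your eigenbasis step is unnecessary: $(I-\sqrt{-2}\,J)^2+I = -2\sqrt{-2}\,J$ outright, and $J$ is invertible.) Both arguments ultimately rest on the same two facts, namely $J^2=I$ and $-2\sqrt{-2}\not\equiv 0 \pmod{\ell}$, and indeed the paper's final pivot block is exactly the $-2\sqrt{-2}\,I$ your computation produces. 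What your approach buys is uniformity and brevity: the identical one-line computation disposes of Lemma \ref{rankB2} as well (there $\det\bigl(I^2-(-\sqrt{-1}J)^2\bigr)=\det(2I)\neq 0$), whereas the paper redoes a separate row reduction. Two minor points of hygiene: your aside that the non-squareness of $-1$ is ``crucial'' is a red herring (it is never used; only $2\neq 0$ and $\sqrt{-2}\neq 0$ in $\FF_\ell$ matter), and you should state which version of the block determinant formula you are relying on --- commutativity of the two blocks in the bottom row already suffices and is clearly satisfied here.
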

\begin{proof}
We have the following chain of
row operations:
\begin{align*}
&\left(\begin{array}{c|c} I - \sqrt{-2} J & -J \\ \hline J & I - \sqrt{-2} J \end{array} \right)\\
& \rightarrow
\left(\begin{array}{c|c} I - \sqrt{-2} J & -J \\ \hline \sqrt{-2} J & \sqrt{-2} I + 2 J \end{array} \right)
\hspace{.3in} \text{(multiply 2nd row by } \sqrt{-2} \text{)}
\\
& \rightarrow
\left(\begin{array}{c|c} I  & \sqrt{-2} I + J \\ \hline \sqrt{-2} J & \sqrt{-2} I + 2 J \end{array} \right)
\hspace{.3in} \text{(add 2nd row to 1st)}
\\
& \rightarrow
\left(\begin{array}{c|c} I  & \sqrt{-2} I + J \\ \hline I & J - \sqrt{-2} I \end{array} \right)
\hspace{.3in} \text{(multiply 2nd row by } \frac{1}{\sqrt{-2}} J \text{)}
\\
& \rightarrow
\left(\begin{array}{c|c} I & I + J \\ \hline 0 & - 2\sqrt{-2} I \end{array} \right)
\hspace{.3in} \text{(2nd row - 1st row } \text{)}
\end{align*}
This matrix has non-zero determinant, and therefore $B$ has non-zero determinant.
\end{proof}
Therefore $B$ has full rank, and so there are no non-trivial solutions to
$$B \cdot (a_0, \ldots, a_{2^{n-2}-1})^T = (0,\ldots,0).$$
Therefore the eigenvalue equals zero only if $a_j = 0$ for all $j$.
So either the matrix of norm residue symbols (Equation \ref{fixedprimematrix}) is identically
zero or it has full rank.

\vspace{.1in}
\item[$\bullet \hspace{1em} p \equiv 3 \mmod 8$, $\ell \equiv 3 \mmod 8$]
Since $p \equiv 3 \mmod 8$, no primes lying over $p$ are fixed by $\tau$.
Without loss of generality, we can choose $\mfp$ so that $\tau \mfp = \sigma \mfp$. Then
$$\tau \sigma^j \mfp = \sigma^{-j} \tau \mfp = \sigma^{-j+1} \mfp.$$

Again construct a matrix $a_{ij}$ over $\FF_\ell$ using the norm residue symbols:
$$\left(\frac{u_j}{\mfp_i}\right) \equiv \omega^{a_{ij}} \mmod \mfp_i$$
where $\omega$ is a fixed primitive $\ell^{th}$ root of unity.

We'll again start by considering $\mfp_0 = \mfp$.
Write
$$\left( \frac{u_j}{\mfp} \right) = \omega^{a_j}.$$

Then using $\tau u_j = u_{2^{n-1}-j}^{-1}$ and
the assumption that $\tau \mfp = \sigma \mfp$, for $0 \leq j < 2^{n-2}$,
$$\omega^{a_j} = \left ( \frac{u_j}{\mfp} \right) = \tau \left( \frac{u_j}{\mfp} \right)
=\left( \frac{u_{2^{n-1}-j}}{\sigma \mfp} \right)^{-1}.$$
Apply $\sigma^{2^n-1}$:
$$\omega^{a_j} = \sigma^{2^n-1} (\omega^{a_j} ) =
\left( \frac{u^{-1}_{2^{n-1}-j-1}}{\mfp} \right)
= \omega^{-a_{2^{n-1}-j-1}}.$$

Therefore the first row of the matrix is
$$\begin{pmatrix}
a_0 & a_1 & \ldots & a_{2^{n-2}-1} & -a_{2^{n-2}-1} & \ldots & -a_0
\end{pmatrix}.$$

We can again apply $\sigma$ to get the full matrix:
\begin{equation}\label{nofixedprimematrix}
\begin{pmatrix}
a_0 & a_1 & \ldots & a_{2^{n-2}-1} & -a_{2^{n-2}-1} & \ldots & -a_0 \\
a_0 & a_0 & \ldots & a_{2^{n-2}-2} & a_{2^{n-2}-1} & \ldots & -a_1 \\
a_1 & a_0 & \ldots & a_{2^{n-2}-3} & a_{2^{n-2}-2} & \ldots & -a_2 \\
\vdots & \vdots & \ddots & \vdots & \vdots & \ddots & \vdots \\
a_2 & a_3 & \ldots & -a_{2^{n-2}-2} & -a_{2^{n-2}-3} & \ldots & -a_1 \\
a_1 & a_2 & \ldots & -a_{2^{n-2}-1} & -a_{2^{n-2}-2} & \ldots & -a_0 \\
\end{pmatrix}
\end{equation}

If $n = 2$, we have the matrix
$$\begin{pmatrix}
a_0 & -a_0 \\
a_0 & a_0 \\
-a_0 & a_0 \\
-a_0 & -a_0
\end{pmatrix}$$
which has rank 0 if $a_0 = 0$ and rank $2$ if $a_0 \neq 0$.

For $n>2$, we again apply Proposition \ref{negaeigen}.
The top half of the matrix has eigenvectors
$$(1, \zeta, \ldots, \zeta^{2^{n-1}-1})$$
where $\zeta$ is one of the $2^{n-1}$ distinct
solutions to $x^{2^{n-1}}+1=0$. The corresponding eigenvalue is
\begin{align*}
& a_0 + a_1 \zeta + \ldots + a_{2^{n-2}-1} \zeta^{2^{n-2}-1} - a_{2^{n-2}-1} \zeta^{2^{n-2}} - \ldots - a_0 \zeta^{2^{n-1}-1} \\
&= \sum_{j=0}^{2^{n-2}-1} a_j \zeta^j - \sum_{j=0}^{2^{n-2}-1} a_{2^{n-2}-1-j} \zeta^{2^{n-2}+j}.
\end{align*}

For $n>2$, we again have
\begin{align*}
x^{2^{n-1}}+1 &\equiv (x^{2^{n-2}}+ \sqrt{-2} x^{2^{n-3}}-1)(x^{2^{n-2}}- \sqrt{-2} x^{2^{n-3}}-1), \\
\zeta^{2^{n-2}} &\equiv \sqrt{-2} \zeta^{2^{n-3}} + 1, \\
\zeta^{2^{n-2}+2^{n-3}} &\equiv -\zeta^{2^{n-3}} + \sqrt{-2}.
\end{align*}

We can use this to reduce the eigenvalue to lower terms.
\begin{align*}
& \sum_{j=0}^{2^{n-2}-1} a_j \zeta^j - \sum_{j=0}^{2^{n-2}-1} a_{2^{n-2}-1-j} \zeta^{2^{n-2}+j} \\
&= \sum_{j=0}^{2^{n-3}-1} \left( a_j \zeta^j + a_{2^{n-3}+j} \zeta^{2^{n-3}+j}  -  a_{2^{n-2}-1-j} \zeta^{2^{n-2}+j} -  a_{2^{n-3}-1-j} \zeta^{2^{n-2}+2^{n-3}+j} \right) \\
&\equiv \sum_{j=0}^{2^{n-3}-1} \big( (a_j- a_{2^{n-2}-1-j}- \sqrt{-2} a_{2^{n-3}-1-j}) \zeta^j \\
&\hspace{.1in} + (a_{2^{n-3}+j}- \sqrt{-2} a_{2^{n-2}-1-j} + a_{2^{n-3}-1-j} ) \zeta^{2^{n-3}+j} \big) \\
\end{align*}

Writing the matrix $B$ corresponding to the eigenvector
as in the previous case, we obtain
$$B =
\left( \begin{array}{c|c}
I - \sqrt{-2} J & -J \\ \hline J & I - \sqrt{-2} J
\end{array} \right)$$
where the $I$ and $J$ matrices are both of size $(2^{n-3}) \times (2^{n-3})$.
So by Lemma \ref{rankB}, the matrix has full rank and so there are no non-trivial
solutions for the $\{a_j\}$ that yield a zero eigenvalue.

\vspace{.1in}
\item[$\bullet \hspace{1em} p \equiv 7 \mmod 8$, $\ell \equiv 5 \mmod 8$]

In this case, we again have a fixed prime: $\tau \mfp = \mfp$. Then
$$\tau \sigma^j \mfp = \sigma^{-j} \tau \mfp = \sigma^{-j} \mfp.$$
We then construct the matrix $(a_{ij})$ using the norm residue symbols
to get the same skew circulant matrix as in Equation \ref{fixedprimematrix}.

The eigenvalues of the matrix are of the form
\begin{align*}
\sum_{j=0}^{2^{n-2}-1} a_j \zeta^j - \sum_{j=1}^{2^{n-2}-1} a_{2^{n-2}-j} \zeta^{2^{n-2}+j}.
\end{align*}
for $\zeta$ a solution to $x^{2^{n-1}}+1=0$.
We want to show that the eigenvalues are all non-zero unless the matrix is the
zero matrix.

The case $n=2$ is again trivial.
For $n>2$, we use Lemma \ref{lemmafactor},
which tells us that $\zeta^{2^{n-2}} \equiv \pm \sqrt{-1}$.
Again, we assume without loss of generality that $\zeta^{2^{n-2}} \equiv \sqrt{-1}$.

We can use this to reduce the eigenvalue to lower terms.
\begin{align*}
& \sum_{j=0}^{2^{n-2}-1} a_j \zeta^j - \sum_{j=1}^{2^{n-2}-1} a_{2^{n-2}-j} \zeta^{2^{n-2}+j} \\
\equiv & \sum_{j=0}^{2^{n-2}-1} a_j \zeta^j - \sqrt{-1} \sum_{j=1}^{2^{n-2}-1} a_{2^{n-2}-j} \zeta^{j} \\
\equiv & \hspace{.1in} a_0 + \sum_{j=1}^{2^{n-2}-1} (a_j - \sqrt{-1} a_{2^{n-2}-j}) \zeta^j
\end{align*}

We write the eigenvalue as a $2^{n-2} \times 2^{n-2}$ matrix $B$ multiplied by the column vector $(a_0, a_1, \ldots, a_{2^{n-2}-1})$ just as in the previous cases.

Then
$$B =
\left( \begin{array}{c|c|c|c} 1& & 0 &  \\
\hline & I & &  - \sqrt{-1} J \\
\hline 0 &  & 1 - \sqrt{-1} & \\
\hline &  - \sqrt{-1} J & & I  \\
\end{array} \right)
$$
where the $I$ and $J$ matrices are both of dimension $(2^{n-3}-1) \times (2^{n-3}-1)$.

\begin{lemma}\label{rankB2}
The matrix
$$\left(\begin{array}{c|c} I  & -\sqrt{-1} J \\ \hline -\sqrt{-1} J  & I  \end{array} \right)$$
has full rank.
\end{lemma}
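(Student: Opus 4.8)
The plan is to compute the determinant of the block matrix directly, exploiting the key identity $J^2 = I$ (the anti-identity matrix is its own inverse) together with the fact that $\sqrt{-1}\in\FF_\ell$ is a nonzero scalar. I would mirror the style of the proof of Lemma \ref{rankB}: perform block row operations to reach an upper block-triangular form whose diagonal blocks are manifestly invertible.

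Concretely, first I would multiply the second block row by the matrix $-\sqrt{-1}J$, which is invertible since $(-\sqrt{-1})^2 = -1 \neq 0$ in $\FF_\ell$ and $J^2 = I$; using $(-\sqrt{-1}J)(-\sqrt{-1}J) = -J^2 = -I$, this turns the matrix into
\[
\left(\begin{array}{c|c} I & -\sqrt{-1}\,J \\ \hline -I & -\sqrt{-1}\,J \end{array}\right).
\]
Then I would add the first block row to the second to obtain
\[
\left(\begin{array}{c|c} I & -\sqrt{-1}\,J \\ \hline 0 & -2\sqrt{-1}\,J \end{array}\right).
\]
The lower-right block $-2\sqrt{-1}\,J$ is invertible because $2\sqrt{-1} \neq 0$ in $\FF_\ell$ (here $\ell$ is odd and $\ell \equiv 5 \pmod 8$, so $\sqrt{-1}$ exists and is a unit) and $J$ is invertible. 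Hence the row-reduced matrix has nonzero determinant, so the original matrix has full rank. Equivalently, one can quote the Schur-complement formula: with $A = D = I$, $B = C = -\sqrt{-1}J$, the determinant equals $\det(A)\det(D - CA^{-1}B) = \det\bigl(I - (-J^2)\bigr) = \det(2I) = 2^{\,2^{n-3}-1} \neq 0$ in $\FF_\ell$.

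There is essentially no obstacle here — unlike Lemma \ref{rankB}, the off-diagonal blocks commute nicely and square to $\pm I$, so the computation is even shorter. The only point to be careful about is to confirm that every scalar appearing in a pivot ($-\sqrt{-1}$, $2$, and hence $-2\sqrt{-1}$) is a unit in $\FF_\ell$, which holds since $\ell$ is an odd prime with $\ell \equiv 5 \pmod 8$. As in the previous case, full rank of this $2^{n-2}\times 2^{n-2}$ matrix $B$ then forces the eigenvalue expression to vanish only when all $a_j = 0$, so the matrix of norm residue symbols (again of the form in Equation \ref{fixedprimematrix}) is either identically zero or of full rank, completing this case of Theorem \ref{allornothing}.
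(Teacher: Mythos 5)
Your proof is correct and follows essentially the same route as the paper: the same block row operations (left-multiply the second block row by $-\sqrt{-1}J$ using $J^2 = I$, then add the first row to clear the lower-left block), arriving at an upper block-triangular matrix with invertible diagonal blocks. The Schur-complement remark is just a compact repackaging of the same computation, and your care about the scalars being units in $\FF_\ell$ matches the paper's implicit use of $\ell$ odd with $\sqrt{-1} \in \FF_\ell$.
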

\begin{proof}
We have the following chain of
row operations:

\begin{align*}
& \left(\begin{array}{c|c} I  & -\sqrt{-1} J \\ \hline -\sqrt{-1} J  & I  \end{array} \right) \\
& \rightarrow
\left(\begin{array}{c|c} I & -\sqrt{-1} J \\ \hline -I  &  - \sqrt{-1} J  \end{array} \right)
\hspace{.3in} \text{(multiply 2nd row by } - \sqrt{-1} J \text{)}
\\
& \rightarrow
\left(\begin{array}{c|c} I & -\sqrt{-1} J \\ \hline 0  &  - 2 \sqrt{-1} J  \end{array} \right)
\hspace{.3in} \text{(add 1st row to 2nd)}
\\
& \rightarrow
\left(\begin{array}{c|c} I & -\sqrt{-1} J \\ \hline 0  & 2I  \end{array} \right)
\hspace{.3in} \text{(multiply 2nd row by } \sqrt{-1} J  \text{)}
\\
\end{align*}
This matrix has non-zero determinant.
\end{proof}
Therefore $B$ has full rank and so the only way for the
eigenvalue to equal zero is if $a_j = 0$ for all $j$.

\vspace{.1in}
\item[$\bullet \hspace{1em} p \equiv 3 \mmod 8$, $\ell \equiv 5 \mmod 8$]
Here, the matrix $(a_{ij})$ derived from the norm residue symbols is of the
same form as in Equation \ref{nofixedprimematrix}. Its eigenvalues are of the form
\begin{align*}
\sum_{j=0}^{2^{n-2}-1} a_j \zeta^j - \sum_{j=0}^{2^{n-2}-1} a_{2^{n-2}-1-j} \zeta^{2^{n-2}+j}.
\end{align*}
By Lemma \ref{lemmafactor}, $\zeta^{2^{n-2}} \equiv  \sqrt{-1}$.
Therefore the eigenvalue can be reduced to
\begin{align*}
\sum_{j=0}^{2^{n-2}-1} (a_j - \sqrt{-1} a_{2^{n-2}-1-j}) \zeta^{j}.
\end{align*}

This eigenvalue, when embedded into a matrix $B$, is of the form
$$\left(\begin{array}{c|c} I  & -\sqrt{-1} J \\ \hline -\sqrt{-1} J  & I  \end{array} \right)$$
where $I$ and $J$ have dimension $2^{n-3} \times 2^{n-3}$.
By Lemma \ref{rankB2}, we know this matrix has non-zero determinant.
\end{description}

In all four cases, the eigenvalues are all non-zero unless the matrix
is identically zero. Therefore
the matrix of norm residue symbols is either identically zero or it has full rank.

If primes over more than one rational prime ramify, we can construct matrices
for each set of conjugate primes separately. Then each matrix is either full
rank or rank zero. Therefore at a given prime,
the units are either all local norms or none
of them are. If the units are all local norms modulo all of the primes,
the units are global norms. Otherwise, none of the units are global norms.
\vspace{-\baselineskip}
\hspace{6in} \qed
\vspace{\baselineskip}

The theorem is false for $\ell \equiv 1 \text{ or } 7 \mmod 8$.
For example, consider the degree 7 field with defining polynomial
$$x^7 - x^6 - 54x^5 + 31x^4 + 558x^3 + 32x^2 - 1713 x - 1121$$
which has discriminant $127^6$. The relative unit group of $K_3$ has four
generators. In this case, the matrix of norm residue symbols has rank 2.

\section{The $\ell$-Class Groups of $L_n$}\label{sec:app}

We now develop a model for how often we should expect the units in
$K_n$ to be the norms of elements in $L_n$, where $K_n$ is the $n^{th}$
layer of the anti-cyclotomic $\ZZ_2$-extension of $\QQ(i)$ and $L_n/K_n$
is the lift of a cyclic degree $\ell$ extension $L/\QQ$ of prime degree $\ell \neq 2$.
We use the same assumptions
as in the statement of Theorem \ref{allornothing}.
We also assume that $\ell \nmid h(K_n)$.
We expect that the norm residue symbols are
equidistributed, and that we can therefore use
the results of Theorem \ref{allornothing} to
derive the probabilities that the units are norms.

\subsection{The $n=0$ Case}\label{n0case}

The $K_0$ case is trivial, since there are no fundamental units in $K_0$.
Therefore by Theorem \ref{chevalley},
$\rk A_0^\Delta = 0$. We can then apply the following proposition.

\begin{proposition}[Gras \cite{gras1972}, Proposition 4.1]\label{grasprop}
Let $A$ be the $\ell$-class group of a cyclic degree $\ell$ extension $L/K$
with Galois group $\langle \sigma \rangle$ and  $\ell \nmid h(K)$.
Let $$A^j = \{a \in A | a^{(\sigma-1)^j} = 1 \}$$ for $j \geq 0.$
Then $A^j \subseteq A^{j+1}$ and $A^j = A^{j+1}$ if and only if $A^j=A$.
\end{proposition}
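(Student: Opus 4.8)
The statement is a standard piece of Iwasawa-module-style algebra, so my plan is to work inside the group ring $R = \FF_\ell[\langle\sigma\rangle]$ and exploit that $\sigma$ acts on the finite $\ell$-group $A$ with order dividing $\ell$, so that $(\sigma-1)^\ell$ annihilates $A$ and $(\sigma-1)$ is a nilpotent endomorphism. First I would record that $\ell\nmid h(K)$ forces the transfer/norm map to behave well, and more importantly that $A$ is a module over $\ZZ_\ell[\langle\sigma\rangle]$ on which $1+\sigma+\dots+\sigma^{\ell-1}$ acts as multiplication by the norm; combined with $\ell\nmid h(K)$ this shows the norm is surjective onto the $\ell$-class group of $K$, which is trivial, so in fact $A = A^{(\sigma-1)}\cdot(\text{something})$ — but the cleanest route is just to treat $t=\sigma-1$ as a nilpotent operator on the finite abelian $\ell$-group $A$ and define $A^j=\ker(t^j)$. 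The inclusion $A^j\subseteq A^{j+1}$ is immediate since $t^j a=1\Rightarrow t^{j+1}a = t(t^j a)=1$.

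The heart is the ``if and only if''. The nontrivial direction is: if $A^j = A^{j+1}$ then $A^j = A$. This is the classical ``stabilization'' lemma for a nilpotent operator: if the kernel filtration stops growing at step $j$, it has already exhausted the whole module. I would prove it by showing $A^j=A^{j+1}\Rightarrow A^{j+1}=A^{j+2}$ and then induct, concluding $A^j = A^{j+k}$ for all $k$, hence $A^j = \bigcup_k A^k = A$ (the union being everything because $t$ is nilpotent, say $t^\ell = 0$ on $A$, so $A^\ell = A$). For the inductive step: take $a\in A^{j+2}$, so $t^{j+1}(ta)=t^{j+2}a=1$, i.e. $ta\in A^{j+1}=A^j$, so $t^j(ta)=t^{j+1}a=1$, giving $a\in A^{j+1}$; thus $A^{j+2}\subseteq A^{j+1}$, and the reverse inclusion is the trivial one. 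The converse direction ($A^j=A$ implies $A^j=A^{j+1}$) is then immediate since $A^{j+1}\subseteq A = A^j\subseteq A^{j+1}$.

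The one point that needs the hypothesis $\ell\nmid h(K)$ rather than pure linear algebra is knowing that $A^\ell = A$, i.e. that $(\sigma-1)^\ell$ kills $A$; this follows because $A$ is a finitely generated $\ZZ_\ell$-module killed by $\ell$ (being the $\ell$-class group of a degree-$\ell$ extension with $\ell\nmid h(K)$, Nakayama-type arguments show $A$ is elementary $\ell$-abelian — actually one only needs that $\sigma$ has order $\ell$ and $A$ is a $\ZZ_\ell[\langle\sigma\rangle]$-module, so $(\sigma-1)^\ell = \sigma^\ell - 1 + \ell(\cdots) = \ell\cdot(\cdots)$ in $\ZZ_\ell[\langle\sigma\rangle]$ acts as an operator divisible by $\ell$; iterating, $(\sigma-1)^{\ell k}$ is divisible by $\ell^k$, which tends to $0$ on the finite group $A$). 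So $t$ is nilpotent on $A$ and the linear-algebra argument closes. I expect the main obstacle to be purely expository: stating precisely why $\sigma-1$ is nilpotent on $A$ (the $p\neq\ell$ subtlety — here the "$p$" of the $\ZZ_p$-tower is $2$ and the class group prime is $\ell$, so there is no clash, and $\ell\nmid h(K)$ guarantees we may apply Gras's result verbatim), after which the filtration-stabilization is routine. Since Gras's Proposition 4.1 is being invoked as a citation, the cleanest presentation is simply to cite it; no further proof is required in this paper.
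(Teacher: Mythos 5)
The paper offers no proof of this proposition at all: it is imported verbatim as a citation to Gras, so there is nothing internal to compare against. Your self-contained argument is correct and complete as a proof of the statement. The two ingredients are exactly right: (i) the kernel-stabilization lemma for an arbitrary endomorphism $t=\sigma-1$ (if $\ker t^{j}=\ker t^{j+1}$ then $\ker t^{j+1}=\ker t^{j+2}$, by the standard computation you give, whence $\ker t^{j}=\ker t^{j+k}$ for all $k$); and (ii) nilpotency of $\sigma-1$ on $A$, which follows since $(\sigma-1)^{\ell}=\sigma^{\ell}-1+\ell(\cdots)=\ell(\cdots)$ in $\ZZ[\langle\sigma\rangle]$, so $(\sigma-1)^{\ell k}$ acts through multiplication by $\ell^{k}$ times something and eventually kills the finite $\ell$-group $A$. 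One small point worth tightening: your parenthetical ``say $t^{\ell}=0$ on $A$'' is not literally justified (that would require $A$ elementary), but your subsequent iteration argument supersedes it, so the proof stands. It is also worth observing, as you half-do, that the hypothesis $\ell\nmid h(K)$ plays no role in this particular statement -- the proposition is pure module theory for a finite $\ell$-group with an automorphism of order $\ell$ -- and is presumably carried along because Gras needs it elsewhere (e.g.\ to identify $A^{1}$ with the ambiguous class group via Chevalley's formula, which is how the paper actually uses the result in \S\ref{n0case}). Your closing remark is the practical upshot: since the paper invokes this as a citation, no proof needs to be supplied there.
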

Note that $A_0^\Delta = A_0^1$
Therefore if $\rk A_0^\Delta = 0$ then we must have $\rk A_0 = 0$.

\subsection{The $n=1$ Case}\label{n1case}
There is one fundamental unit in $K_1 = \QQ(\zeta_8)$. We choose $\sqrt{2}-1$
to be the generator of the unit group.
To determine if it is the norm of an element in
$L_1$, we compute norm residue symbols for the ramified primes.
It happens that the $n=1$ case is special, as seen in the following result.
\begin{proposition}\label{trivialsymbols}
Let $K_1 = \QQ(\zeta_8)$ and let $L_1/K_1$ be the lift of a cyclic degree $\ell$
extension $L/\QQ$, where $\ell$ is an odd prime and primes lying over $\ell$ do not
ramify in $L_1/K_1$.
Let $p \equiv 3 \mmod 8$ be a rational prime that ramifies in
$L_1/K_1$. The norm residue symbols of the units of $K_1$ are trivial for
the primes above $p$.
\end{proposition}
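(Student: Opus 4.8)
The plan is to exploit the special structure of $K_1 = \QQ(\zeta_8)$, whose unit group modulo $\ell^{th}$ powers is (essentially) generated by $\sqrt{2}-1$, together with the fact that complex conjugation $\tau$ acts on this unit and on the primes above a rational prime $p \equiv 3 \mmod 8$ in a very constrained way. Recall from Proposition \ref{7fixprime19} that for $p \equiv 3 \mmod 8$ none of the primes above $p$ in $K_1$ are fixed by $\tau$; concretely, $p$ is inert in $F_1 = \QQ(\sqrt 2)$ and splits in $K_1/F_1$, so there are exactly two primes $\mfp, \mfp'$ above $p$ in $K_1$ with $\tau \mfp = \mfp'$. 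So I only need to show the two norm residue symbols $\left(\frac{\sqrt 2 - 1}{\mfp}\right)$ and $\left(\frac{\sqrt 2 - 1}{\mfp'}\right)$ are trivial, and by the $\tau$-compatibility of the symbol, $\tau\left(\frac{u}{\mfp}\right) = \left(\frac{\tau u}{\mfp'}\right)$, it suffices to handle $\mfp$ alone provided I also understand $\tau(\sqrt 2 - 1)$.

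The key computation: write $\left(\frac{\sqrt 2 - 1}{\mfp}\right) \equiv (\sqrt 2 - 1)^{(N\mfp - 1)/\ell} \equiv \omega^{a} \pmod{\mfp}$ for a fixed primitive $\ell^{th}$ root of unity $\omega$, where $N\mfp = p^2$ since $\mfp$ lies over an inert prime of $\QQ(i)/\QQ$ that splits completely in $K_1/\QQ(i)$. Now $\tau$ fixes $F_1 = \QQ(\sqrt 2)$, hence $\tau(\sqrt 2 - 1) = \sqrt 2 - 1$, and we arrange (as in \S\ref{sec:proof}) that $\tau$ fixes $\omega$. Applying $\tau$ to the congruence and using $\tau \mfp = \mfp'$ gives $\left(\frac{\sqrt 2 - 1}{\mfp'}\right) = \omega^{a}$ as well, so both symbols equal $\omega^a$. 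Next I would use a norm/product argument: the product of the two local symbols over all primes above $p$ equals the contribution one gets by pushing down to $F_1$, where $p$ is inert, so $\sqrt 2 - 1 \in F_1$ is automatically a local norm there (a unit in an unramified local extension of odd residue degree, or more simply because the symbol over the single prime of $F_1$ above $p$ can be computed via $N_{K_1/F_1}$ and $K_1/F_1$ is unramified at $p$). This forces $\omega^{2a} = 1$, hence $a = 0$ since $\ell$ is odd.

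Alternatively — and this may be the cleaner route — I would argue directly that $\sqrt 2 - 1$ is a local norm from $(L_1)_\mfp$ to $(K_1)_\mfp$ because $(K_1)_\mfp = \QQ_p(\zeta_8)$ contains $\sqrt 2$, so $\sqrt 2 - 1$ already lies in the subfield $\QQ_p(\sqrt 2)$, which is the completion of $F_1$ at the inert prime; since $(L_1)_\mfp / (K_1)_\mfp$ is a ramified degree $\ell$ extension whose norm group contains all units of $(K_1)_\mfp$ that come from the unramified subextension structure appropriately, one checks $\sqrt 2 - 1$ is a norm. The main obstacle is making this last local statement precise: I need to verify that an element lying in the unramified-over-$\QQ_p$ part of $(K_1)_\mfp$ is a norm from the ramified degree-$\ell$ extension $(L_1)_\mfp$. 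This follows because the norm residue symbol $\left(\frac{\cdot}{\mfp}\right)$ factors through $(K_1)_\mfp^\times / N (L_1)_\mfp^\times \cong \ZZ/\ell\ZZ$, and on units this quotient is detected by reduction modulo $\mfp$ to the residue field raised to the $(N\mfp-1)/\ell$ power; since $\sqrt 2 - 1$ reduces to an element of the degree-one-over-$\FF_p$ subfield (as $\sqrt 2 \in \FF_p$ when $p \equiv \pm 1 \bmod 8$, but here $p \equiv 3 \bmod 8$ so $\sqrt 2 \notin \FF_p$ — I must instead use that the relevant residue field is $\FF_{p^2}$ and $(\sqrt 2 - 1)^{(p^2-1)/\ell}$, with $\ell \mid p^2 - 1$ forced by $\mfp$ splitting completely in the degree $\ell$ extension, already equals $1$ because $\sqrt 2 - 1$ has order dividing $p^2 - 1$ but in fact lies in the image of the norm from $\FF_{p^2}$... ). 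I will resolve this by the $\tau$-symmetry argument of the previous paragraph, which sidesteps the delicate residue-field bookkeeping: symmetry gives both symbols equal, and the global product formula (Hasse reciprocity over $F_1$, where there is only one prime above $p$) gives their product trivial, hence each is trivial.
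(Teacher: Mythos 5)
Your argument does not close. Both of your routes reduce to the single claim that $\sqrt{2}-1$ is a local norm at the prime $\mfP$ of $F_1=\QQ(\sqrt2)$ above $p$ for the degree-$\ell$ extension $LF_1/F_1$, and neither route actually establishes it. In Route 1, the $\tau$-symmetry step is essentially vacuous here: since $p$ is inert in $F_1$ and splits in $K_1/F_1$, the two completions $(K_1)_\mfp$ and $(K_1)_{\mfp'}$ both equal $(F_1)_\mfP=\QQ_{p^2}$ and the two local symbols of the element $\sqrt2-1\in F_1$ coincide for that reason alone; the whole content is whether that common value is trivial. Your justification --- that $\sqrt2-1$ is ``automatically a local norm'' over $F_1$ because $K_1/F_1$ is unramified at $p$ --- addresses the wrong extension: the extension whose norm group matters at $\mfP$ is $LF_1/F_1$, which is \emph{ramified} there, so units are not automatically norms. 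The fallback to global reciprocity over $F_1$ fails for a second reason: the product formula runs over \emph{all} ramified primes of $LF_1/F_1$, and the proposition does not assume $p$ is the only ramified prime, so reciprocity does not isolate the symbol at $\mfP$. (One would first have to replace $L$ by an auxiliary cyclic field ramified only at $p$ and then argue the local symbol at $p$ is unchanged.) Route 2 is abandoned mid-sentence at exactly the point where the work lies.

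For comparison, the paper's proof is an explicit cyclotomic computation: it introduces an auxiliary $L'\subseteq\QQ(\zeta_{8p})$ ramified only at $p$, computes $N_{\QQ(\zeta_{8p})/\QQ(\zeta_8)}(1-\zeta_{8p})=(1-\zeta_8)/(1-\zeta_8^3)=\zeta_8^{-1}(\sqrt2-1)$ --- the hypothesis $p\equiv 3\bmod 8$ enters precisely in identifying $b=3p$ as the unique excluded index --- so that $\sqrt2-1$ is a global norm from $L_1'$ up to odd powers, hence a local norm at $p$; an unramified local extension argument then transfers this to an arbitrary $L_1$. It is worth noting that your abandoned local route can in fact be completed and gives a shorter proof: since $p\equiv 3\bmod 8$, $\sqrt2\notin\FF_p$, and $N_{\FF_{p^2}/\FF_p}(\sqrt2-1)=(\sqrt2-1)(-\sqrt2-1)=-1$, so $(\sqrt2-1)^{p+1}=-1$ in $\FF_{p^2}$ and the symbol is $(\sqrt2-1)^{(p^2-1)/\ell}=(-1)^{(p-1)/\ell}=1$, because $p-1\equiv 2\bmod 8$ and $\ell$ odd force $(p-1)/\ell$ to be even; combined with the observation that the unit-norm group of a totally tamely ramified degree $\ell$ extension of $\QQ_{p^2}$ is $(\FF_{p^2}^\times)^\ell$ independently of the extension, this settles the claim. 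But as written, neither that computation nor the reciprocity argument is carried out, so the proposal has a genuine gap.
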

\begin{proof}
To prove the theorem, we let $L'_1/K_1$ be the lift of a cyclic degree $\ell$
extension $L'/\QQ$ in which only primes above
$p$ ramify. Since $L'_1$ is the lift of an abelian degree $\ell$ number field,
$L'_1 \subseteq \QQ(\zeta_{8p})$. Since $1-\zeta_{8p}$ is a unit in $\QQ(\zeta_{8p})$,
$N_{\QQ(\zeta_{8p})/L'_1} (1-\zeta_{8p})$ is a unit in $L'_1$.

Note that
\begin{align*}
N_{L'_1/Q(\zeta_8)}N_{\QQ(\zeta_{8p})/L'_1} \Big ( 1-\zeta_{8p} \Big )
&= N_{\QQ(\zeta_{8p})/\QQ(\zeta_8)} \Big ( 1-\zeta_{8p} \Big  ) \\ &=
\prod_{\substack{1 \leq b < 8p \\ b \equiv 1 \text{ mod } 8 \\ (b,p)=1}} \Big ( 1-\zeta_{8p}^b \Big ) \\
&= \frac{\prod_{\substack{1 \leq b < 8p \\ b \equiv 1 \text{ mod } 8}} \Big ( 1-\zeta_{8p}^b \Big )}{\prod_{\substack{1 \leq b < 8p \\ b \equiv 1 \text{ mod } 8 \\ b \equiv 0 \text{ mod }p}} \Big ( 1-\zeta_{8p}^b \Big )} \\
\end{align*}
We also have
$$\prod_{\substack{1 \leq b < 8p \\ b \equiv 1\text{ mod } 8}} \Big ( 1-\zeta_{8p}^b \Big ) = 1-\zeta_8.$$
Since $p \equiv 3 \mmod 8$, the only value of $b$ between $1$ and $8p$ that is
$0 \mmod p$ and $1 \mmod 8$ is $3p$.
Therefore
\begin{align*}
N_{\QQ(\zeta_{8p})/\QQ(\zeta_{8})} \Big ( 1-\zeta_{8p} \Big )
= \frac{1-\zeta_8}{1-\zeta_8^3} =
\zeta_8^{-1} (\sqrt{2}-1)
\end{align*}
Since $L'_1/K_1$ is a degree $\ell \neq 2$ extension, $\sqrt{2}-1$ is the norm
of a unit in $L'_1$.

Now note that $L_1 \subseteq \QQ(\zeta_{8pm})$ for some $m$ with $\gcd(\ell p, m)=1$.
Since $1-\zeta_{8p}$ is a unit in $\QQ(\zeta_{8p})$, it is a norm for
the unramified extension $\QQ_p(\zeta_{8pm}) / \QQ_p(\zeta_{8p})$.
Since $\zeta_8^{-1} (\sqrt{2}-1)$ is the norm of $1-\zeta_{8p}$ for
$\QQ(\zeta_{8p})/\QQ(\zeta_8)$, $\zeta_8^{-1} (\sqrt{2}-1)$
is a norm for $\QQ_p(\zeta_{8pm}) / \QQ_p(\zeta_{8})$. This means that it is a
norm for the subextension obtained by completing $L_1/\QQ(\zeta_8)$ at a prime
above $p$. Since $\ell$ is odd, $\sqrt{2}-1$ is a local norm at all primes above $p$.
\end{proof}
Therefore if all ramified primes in $L_1/K_1$ lie over primes congruent to $3 \mmod 8$,
then the norm residue symbols of the units of $K_1$ are all trivial and so
by Theorem \ref{chevalley} we have $\rk A_1^\Delta = 1$.

The norm residue symbols are not necessarily trivial
at primes that are $7 \mmod 8$.
Recall that we let $t$ be the number of rational primes that ramify in $L/\QQ$
and that all such primes are inert in $K_0/\QQ$.
Let $0 \leq s \leq t$ be the number of rational
primes below the ramified primes in $L_1/K_1$ that are congruent to $7 \mmod 8$.
There are then $s$ independent norm residue symbols, all of which must be trivial in order for
the fundamental unit to be a global norm.
Therefore the probability that $\rk A_1^\Delta = 1$ is $\ell^{-s}$ and the probability that
$\rk A_1^\Delta = 0$ is $1-\ell^{-s}$.

\subsection{$L_n/K_n$ for $n>1$}
For the general $n>1$ case, there are $2^{n-1}$
independent elements of $U_n^{rel}$ (see Corollary \ref{relative_unit_fixed}).
By Theorem \ref{allornothing}, for $n \geq 2$, either all of these relative units in $K_n$ are norms
of elements in $L_n$ modulo $\ell^{th}$ powers or none of them are, and there are $2^{n-2}$
independent norm residue symbols that determine if the units are norms.

Under the assumption that norm residue symbols are equidistributed,
we can compute
the probabilities that the relative units modulo $\ell^{th}$ powers are norms.
The probability that the relative units in $K_1$ are all norms of elements in $L_1$
is $1/\ell^s$. For $n>1$, the probability that the relative units in $K_n$ are all
norms of elements in $L_n$ is $1/\ell^{2^{n-2}t}$.

Let $A_n$ be the $\ell$-class group of $L_n$.
By Theorem \ref{chevalley},
we have $$\rk A_n^\Delta = 2^n t - 1 -r_n$$ where $r_n$ is the
rank of the matrix of norm residue symbols for the units.
This assumes that $\ell$ does not divide $h(K_n)$, which is quite possibly true.
If $\ell$ does divide $h(K_n)$, then these heuristics apply to the class group of $L_n$ excluding the contribution from $K_n$.

The probability that none of the units are norms in $K_n$ is the product of
the probabilities that none of the relative units are norms in $K_j$ for $1 \leq j \leq n$.

If $r_n$ is maximal, then $r_n = 2^n-1$. Therefore
$$\rk A_n^\Delta = 2^n t - 2^n.$$
The probability
that the matrix is full rank at each step (which means $r_n$ is maximal) is
$$(1-1/\ell^s) \prod_{j=2}^{n} (1-1/\ell^{2^{j-2}t}).$$

In general, we can determine the rank of the matrix of norm residue symbols by considering
the matrices corresponding to the units of $K_{j+1}$, $0 \leq j \leq n-1$, independently.
At each level, the matrix of norm residue symbols is either full rank or identically zero.

For example, suppose we wish to know the probability
that the rank of the norm residue matrix is five.
Since there are $2^{n-1}$ relative units in $K_n$, $n>0$,
and either all relative units at a given level are norms or none of them are,
then the only way for the rank of the matrix to be five is for the unit in $K_1$ and
the four relative units of $K_3$ to be norms.

Consider the case where just one prime ramifies in $L/K$.
If the ramified prime is $7 \mmod 8$,
then a full rank norm residue symbol matrix implies that $\rk A_n^\Delta = 0$.
The probability that $\rk A_n^\Delta = 0$ is
\begin{align}\label{rank0equation}
(1-1/\ell) \prod_{j=2}^{n} (1-1/\ell^{2^{j-2}}).
\end{align}
So we expect that
\begin{align*}
\lim_{n \rightarrow \infty} (1-1/\ell) \prod_{j=2}^{n} (1-1/\ell^{2^{j-2}})
\end{align*}
of fields will have $A_n^\Delta \simeq 1$ up the tower.
We can then apply Proposition \ref{grasprop} to see that
if $A_n^\Delta \simeq 1$, then the
$\ell$-class group $A_n$ is also trivial up the tower.

Now consider the case where the ramified prime is $3 \mmod 8$.
Then by Proposition \ref{trivialsymbols}, the fundamental
unit of $K_1$ is always a norm, and so
the probability that
$\rk A_n^\Delta = 1$
is
\begin{align}\label{rank1equation}
\prod_{j=2}^{n} (1-1/\ell^{2^{j-2}}).
\end{align}
So we expect in the limit that
\begin{align*}
\lim_{n \rightarrow \infty} \prod_{j=2}^{n} (1-1/\ell^{2^{j-2}})
\end{align*}
of fields will have $A_n^\Delta  \simeq \ZZ/\ell\ZZ$
up the tower. Therefore the rank of $A_n^\Delta$ is bounded, and so the rank
of $A_n$ is bounded.
However, by the following proposition, this implies that the order of $A_n$ is also bounded.
\begin{proposition}[Washington \cite{washington1975class}, Proposition 1]
Let $K$ be a number field, let $p$ be a prime, and let $K_\infty/K$ be any
$\ZZ_p$-extension. $K_n$ is the unique subfield of $K_\infty$ that is degree $p^n$ over $K$. Let $\ell \neq p$ be any other prime number and let $e_n$ be such that
$\ell^{e_n}$ is the exact power of $\ell$ dividing $h(K_n)$. Let $A_n$ be the
$\ell$-class group of $K_n$. Then if $\rk A_n$ is bounded as $n \rightarrow \infty$,
then $e_n$ is also bounded.
\end{proposition}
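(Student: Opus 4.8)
The plan is to show that the tower of $\ell$-class groups stabilizes. Write $A_n$ for the $\ell$-part of the ideal class group of $K_n$, so that $e_n = \log_\ell|A_n|$, and put $G_m = \Gal(K_m/K)$, which is cyclic of order $p^m$. If I can produce an $N$ with $A_n = A_N$ for every $n\ge N$, then $e_n = e_N$ for $n\ge N$ while $e_0,\dots,e_{N-1}$ are each finite, so $(e_n)$ is bounded.

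The first ingredient is that capitulation is injective here. For any ideal $\mathfrak a$ of $K_n$ one has $N_{K_{n+1}/K_n}(\mathfrak a\,O_{K_{n+1}}) = \mathfrak a^{[K_{n+1}:K_n]} = \mathfrak a^{p}$ regardless of ramification, so if $j_n\colon A_n\to A_{n+1}$ denotes extension of ideals and $N_n\colon A_{n+1}\to A_n$ the norm, then $N_n\circ j_n$ is multiplication by $p$. Since $\ell\ne p$ this is an automorphism of the finite $\ell$-group $A_n$; hence $j_n$ is split injective with retraction $p^{-1}N_n$, and therefore, as abelian groups,
\begin{equation*}
A_{n+1}\;\simeq\;j_n(A_n)\oplus Y_n,\qquad Y_n := \ker(N_n\colon A_{n+1}\to A_n).
\end{equation*}
Iterating gives $A_{n+1}\simeq A_0\oplus Y_0\oplus\cdots\oplus Y_n$, so $\rk A_{n+1} = \rk A_0+\sum_{i=0}^{n}\rk Y_i$, where $\rk$ denotes the minimal number of generators, i.e.\ $\dim_{\FF_\ell}(A/\ell A)$. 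Consequently, once $\rk A_n$ is bounded it follows that $Y_i = 0$ for all large $i$, and for those $i$ the injection $j_i$ is an isomorphism — exactly the stabilization I want.

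So it remains to bound $\rk Y_n$ from below whenever $Y_n\ne 0$. Composing norm with extension of ideals gives $j_n\circ N_n = \nu_n$, multiplication by $\nu_n := \sum_{\sigma\in\Gal(K_{n+1}/K_n)}\sigma$ on $A_{n+1}$; since $j_n$ is injective, $Y_n = \ker(\nu_n\text{ on }A_{n+1})$. Now $A_{n+1}$ is a module over $R := \ZZ_\ell[G_{n+1}]\simeq\ZZ_\ell[x]/(x^{p^{n+1}}-1) = \prod_{k=0}^{n+1}\ZZ_\ell[x]/\Phi_{p^k}(x)$, and if $g$ is the generator of $G_{n+1}$ with $g^{p^n}$ generating $\Gal(K_{n+1}/K_n)$, then $\nu_n$ corresponds to $\sum_{i=0}^{p-1}x^{ip^n} = (x^{p^{n+1}}-1)/(x^{p^{n}}-1) = \Phi_{p^{n+1}}(x)$, which acts as the unit $p$ on each $\Phi_{p^k}$-factor with $k\le n$ and as $0$ on the $\Phi_{p^{n+1}}$-factor. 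Hence $Y_n$ is exactly the $\Phi_{p^{n+1}}$-component of $A_{n+1}$, a module over $\ZZ_\ell[x]/\Phi_{p^{n+1}}(x)$. Because $\ell\ne p$, this ring is a product of copies of the unramified extension of $\ZZ_\ell$ of degree $f_{n+1} := \mathrm{ord}_{p^{n+1}}(\ell)$ (the residue degree of $\ell$ in $\QQ(\zeta_{p^{n+1}})$), every residue field being $\FF_{\ell^{f_{n+1}}}$; so by Nakayama any nonzero finite module over it has $\dim_{\FF_\ell}(M/\ell M)\ge f_{n+1}$. Thus $Y_n\ne 0$ forces $\rk Y_n\ge\mathrm{ord}_{p^{n+1}}(\ell)$.

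Finally, since $\ell\ne p$ and $\ell>1$, the image of $\ell$ in $\ZZ_p^\times$ is not a root of unity, hence has infinite order, so $\mathrm{ord}_{p^{n+1}}(\ell)\to\infty$. Letting $r$ be an upper bound for $\rk A_n$ and choosing $N$ with $\mathrm{ord}_{p^{i+1}}(\ell)>r$ for all $i\ge N$, a nonzero $Y_i$ with $i\ge N$ would give $\rk A_{i+1}\ge\rk Y_i>r$, a contradiction; hence $Y_i = 0$ for $i\ge N$, so $A_n = A_N$ for $n\ge N$ and $(e_n)$ is bounded. The one step needing genuine care is the third paragraph: identifying $Y_n$ with the $\Phi_{p^{n+1}}$-isotypic piece of $A_{n+1}$ and reading off the rank bound $\mathrm{ord}_{p^{n+1}}(\ell)$ from the semisimple decomposition of $\ZZ_\ell[G_{n+1}]$ — where $\ell\ne p$ is essential. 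The injectivity of capitulation and the divergence of $\mathrm{ord}_{p^{n+1}}(\ell)$ are then routine consequences of $\ell\ne p$. (This is in essence Washington's original argument.)
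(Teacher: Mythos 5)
The paper states this result only as a citation to Washington's 1975 paper and gives no proof of its own, so there is nothing internal to compare against; judged on its merits, your argument is correct and is essentially Washington's original one. The whole proof is carried by your first two paragraphs: since $\ell\neq p$, the composite $N_n\circ j_n$ is multiplication by $p$, hence an automorphism of the finite $\ell$-group $A_n$, giving the splitting $A_{n+1}\simeq A_n\oplus Y_n$ with $Y_n=\ker N_n$; additivity of $\dim_{\FF_\ell}(A/\ell A)$ then forces all but finitely many $Y_n$ to vanish once the ranks are bounded, so the $j_n$ are eventually isomorphisms and $e_n$ stabilizes. Your third and fourth paragraphs (identifying $Y_n$ with the $\Phi_{p^{n+1}}$-isotypic component and showing $\rk Y_n\geq \mathrm{ord}_{p^{n+1}}(\ell)$ when $Y_n\neq 0$) are correct but not needed for the stated conclusion, since the trivial bound $\rk Y_n\geq 1$ for $Y_n\neq 0$ already suffices; they do yield the stronger quantitative fact that nontrivial growth can occur at only very few levels, which is in the spirit of the heuristics in Section 5 of this paper.
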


\subsection{Washington's Conjecture}

In a 1975 paper, Washington made the following conjecture.
\begin{conjecture}[Washington \cite{washington1975class}]\label{washconj}
Let $\ell \neq p$ be primes.
Let $K$ be a number field. Let $K_\infty/K$ be a $\ZZ_p$-extension and let $K_n$ be the unique subfield of $K_\infty$ which is degree $p^n$ over $K$. Let $h(K_n) = \ell^{e_n}r$, $\ell \nmid r$ be the class number of $K_n$. Then there exist $\beta \geq 0$ and $\gamma$ independent of $n$ such that $e_n = \beta p^n + \gamma$ for sufficiently large $n$.
\end{conjecture}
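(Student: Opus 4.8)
The plan is to treat this not as something provable in full generality --- Washington's conjecture is open outside a few special cases, and nothing below settles it unconditionally --- but to explain how the machinery of this paper reduces it, in the family studied in Theorem~\ref{allornothing}, to a single equidistribution-type input, and to pinpoint where an unconditional argument must break down. Fix $K_0 = \QQ(i)$, the anti-cyclotomic $\ZZ_2$-extension $K_\infty/K_0$, a prime $\ell$ and a cyclic degree-$\ell$ field $L/\QQ$ satisfying the hypotheses of Theorem~\ref{allornothing}, and set $L_n = K_n L$. Write $e_n$ for the exponent of $\ell$ in $h(L_n)$; the goal is to describe $e_n$ as $n \to \infty$, and in particular to establish $\beta = 0$ when only one rational prime ramifies in $L/\QQ$ (the case $t = 1$).

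First I would apply Chevalley's Formula (Theorem~\ref{chevalley}) to $L_n/K_n$. Assuming $\ell \nmid h(K_n)$ for every $n$, and using that $(K_n^\times)^\ell \subseteq N_{L_n/K_n}(L_n^\times)$ --- so that the unit norm index $E(L_n/K_n)$ equals $\ell^{r_n}$, where $r_n$ is the rank of the matrix of norm residue symbols on $U_n = E_{K_n}/E_{K_n}^\ell$ --- this gives $|A_n^\Delta| = \ell^{\,2^n t - 1 - r_n}$. Next, Lemma~\ref{normprop} together with Proposition~\ref{unitsdirectsum} lets one split the norm residue computation layer by layer: $r_n = \sum_{j=1}^{n} r_j'$, where $r_j'$ is the rank of the restriction of the symbol matrix to the new relative units $U_j^{rel}$. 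Theorem~\ref{allornothing} then pins each $r_j'$ (for $j \ge 2$) to one of exactly two values --- $0$ if every relative unit of $K_j$ is a norm in $L_j$, and the full rank of the layer-$j$ symbol matrix otherwise --- so $r_n$ is governed by a finite binary string, one bit per layer. If those bits are all ``full rank'' from some layer $N$ on, then $r_n = (\text{a constant depending only on layers} \le N) + (\text{the full-rank tail})$, whence $|A_n^\Delta| = \ell^{\,2^n(t-1) + O(1)}$; in particular, for $t = 1$ the group $A_n^\Delta$ is bounded. One then argues exactly as in \S\ref{sec:app}: Proposition~\ref{grasprop} promotes a bound on $\rk A_n^\Delta$ to a bound on $\rk A_n$, and Washington's Proposition~1 promotes that to a bound on $e_n$ --- i.e.\ $\beta = 0$ and $e_n = \gamma$ for large $n$.

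The crux is the hypothesis ``all `full rank' from some layer $N$ on,'' which is precisely the equidistribution assumption used throughout \S\ref{sec:app}, and I do not see how to make it unconditional. The event ``every new relative unit of $K_j$ is a local norm in $L_j/K_j$ at the ramified primes'' is a Frobenius condition on those primes relative to a ray class field whose conductor grows with $j$; a Chebotarev-type heuristic predicts it should fail for all large $j$, but there is no a priori reason it must fail for \emph{every} $L$ --- and the existence of an $L$ for which it holds infinitely often is exactly the kind of pathology Washington's conjecture forbids, so proving it here would be essentially circular. The realistic target is therefore a conditional statement: granting that the norm residue symbols of the relative units are equidistributed (or merely that they are not simultaneously trivial at infinitely many layers), the chain above yields $\beta = 0$ for $t = 1$. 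For $t \ge 2$ it yields $|A_n^\Delta| = \ell^{\,2^n(t-1)+O(1)}$, which forces $e_n \ge 2^n(t-1) + O(1)$ but does not by itself give the exact asymptotic $e_n = \beta 2^n + \gamma$: Chevalley's Formula sees only the ambiguous classes $A_n^\Delta = A_n^1$, so obtaining the precise growth of $|A_n|$ requires additionally controlling the lengths of the cyclic $\FF_\ell[\Delta]$-summands of $A_n/\ell A_n$ up the tower, a second difficulty. In short, the honest deliverable is a reduction: these results show that, within this family, any failure of Washington's conjecture would have to originate in the unit norm index behaving worse than equidistribution predicts, rather than in the class groups themselves.
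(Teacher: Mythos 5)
This statement is a conjecture, and the paper offers no proof of it --- only the heuristic discussion in the ``Washington's Conjecture'' subsection, which uses Lemma~\ref{normprop} to show the norm groups form an increasing chain, estimates the probability of a strict increase at layer $n$ as $1/\ell^{2^{n-1}t}$, concludes that the unit norm index stabilizes with probability $1$, and then invokes Chevalley's formula to predict the conjectured growth for $A_n^\Delta$. Your proposal correctly recognizes that the conjecture cannot be established unconditionally and reproduces essentially this same reduction (Chevalley plus the layer-by-layer norm-index analysis plus an equidistribution input, with Proposition~\ref{grasprop} and Washington's Proposition~1 handling the passage from $A_n^\Delta$ to $e_n$ in the $t=1$ case), while being if anything more explicit than the paper about the remaining gap between $A_n^\Delta$ and $A_n$ when $t \ge 2$.
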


By Lemma \ref{normprop}, we have an increasing sequence
$$ \ldots \subseteq N_{L_n/K_n} L_n^\times \cap E_{K_n} \subseteq  \hspace{.1in} N_{L_{n+1}/K_{n+1}} L_{n+1}^\times \cap E_{K_{n+1}} \subseteq  \ldots $$
For $n \geq 2$, the probability that there is a strict increase from step $n$ to step $n+1$ is
$$1/\ell^{2^{n-1}t}.$$
Therefore the probability that there is a strict increase infinitely often is
bounded above by
$$\sum_{j=N}^\infty 1/\ell^{2^{j-1}t}$$ for every $N$, which converges quickly to zero
as $N \rightarrow \infty$.
In other words, the order of $$N_{L_n/K_n} L_n \cap E_{K_n}$$ stabilizes with probability 1.
Therefore
by Chevalley's formula (Theorem \ref{chevalley}) we expect that
there exists an $N_0$ such that the $\ell$-rank of $A_n^\Delta$ is $$\beta \cdot 2^n + \gamma$$
for $n \geq N_0$. Therefore we expect $A_n^\Delta$ to satisfy Conjecture \ref{washconj}.

\section{Data}\label{datasec}

In this section, we include some computational results on the ranks of class groups
in the first few layers in cubic lifts of the anti-cyclotomic $\ZZ_2$-extension of $\QQ(i)$.
We restrict to the case where just one prime ramifies in $L/\QQ$.
The class groups were computed using Sage \cite{sagemath} assuming the Generalized
Riemann Hypothesis.

From \cite{hubbardwashington2017}*{Proposition 5}, we have explicit polynomials for the
first layers of the anti-cyclotomic $\ZZ_2$-extension of $\QQ(i)$.
The extension $K_1/K_0$ has defining polynomial $x^2+2$ and $K_2/K_0$ has
defining polynomial $x^4+2$.
For cubic extensions, we can only compute the class group of $L_n$ for $n \leq 2$
with our current computational resources.

Using the results from \S \ref{sec:app}, we compute the predicted probabilities
for the ranks of $A_n^\Delta$.

\begin{center}
\begin{table}[H]
\begin{tabular}{|l|llll|}
\hline
$n$ & $P(\rk A_n^\Delta = 0)$ & $P(\rk A_n^\Delta = 1)$ & $P(\rk A_n^\Delta = 2)$ & $P(\rk A_n^\Delta = 3)$  \\
\hline
0 & 1 & 0 & 0 & 0 \\
1 & $2/3$ & $1/3$ & 0 & 0 \\
2 & $4/9$ & $2/9$ & $2/9$ & $1/9$ \\
\hline
\end{tabular}
\caption{One prime congruent to $7 \mmod 8$ ramifies.}
\end{table}
\end{center}

\begin{center}
\begin{table}[H]
\begin{tabular}{|l|llll|}
\hline
$n$ & $P(\rk A_n^\Delta = 0)$ & $P(\rk A_n^\Delta = 1)$ & $P(\rk A_n^\Delta = 2)$ & $P(\rk A_n^\Delta = 3)$  \\
\hline
0 & 1 & 0 & 0 & 0 \\
1 & 0 & $1$ & 0 & 0 \\
2 & 0 & $2/3$ & 0 & $1/3$ \\
\hline
\end{tabular}
\caption{One prime congruent to $3 \mmod 8$ ramifies.}
\end{table}
\end{center}

Tables \ref{7mod8table} and \ref{3mod8table} give our computational results.
We include the number of fields with
class group of the given rank and the proportion in parentheses.
Recall that if $A_n^\Delta$ is trivial then so is $A_n$.

\begin{center}
\begin{table}[H]
\begin{tabular}{|l|l|l|l|l|l|}
\hline
$n$ & \# fields & Rank 0 & Rank 1 & Rank 2 & Rank 3     \\
\hline
0 & 156 & 156 (1) & 0 (0)  & 0 (0)  & 0 (0)  \\
1 & 29856 & 19899 (.6665) & 8821 (.2955) & 1136 (.0380) & 0 (0)  \\
2 & 25 & 11 (.44)  & 4 (0.16) & 9 (.36) & 1 (.04)  \\
\hline
\end{tabular}
\caption{Rank of the $3$-class group $A_n$ when one prime congruent to $7 \mmod 8$ ramifies in $L/\QQ$.}
\label{7mod8table}
\end{table}
\end{center}

\vspace{-.2in}

\begin{center}
\begin{table}[H]
\begin{tabular}{|l|l|l|l|l|l|l|l|}
\hline
$n$ & \# fields & Rank 0 & Rank 1 & Rank 2 & Rank 3 & Rank 4    \\
\hline
0 & 155 & 155 (1) & 0 (0)  &  0 (0) &  0 (0)&  0 (0)\\
1 & 873 & 0 (0)  & 589 (.6747) & 284 (.3253) & 0 (0) &  0 (0) \\
2 & 20 & 0 (0)  & 10 (0.5) & 5 (.25) & 2 (.1) & 3 (.15) \\
\hline
\end{tabular}
\caption{Rank of the $3$-class group $A_n$ when one prime congruent to $3 \mmod 8$ ramifies in $L/\QQ$.}
\label{3mod8table}
\end{table}
\end{center}

Heuristics derived from the theory of ambiguous ideals
and strongly ambiguous ideals explain the difference between
the rank of $A_n^\Delta$ and the rank of $A_n$. Although details are outside
the scope of this paper, they can be found in  \cite{kirschthesis}*{\S 6.2.2}.

For example, we can explain
why $2/3$ of the class groups of $A_1$ have rank $0$ when the ramified prime is $7 \mmod 8$
and why $2/3$ of the class groups of $A_1$ have rank 1 when the ramified prime is
$3 \mmod 8$. The heuristics depend on the behavior of the ambiguous and strongly ambiguous ideals.
The fundamental difference between the $3 \mmod 8$ case and the $7 \mmod 8$ case can be
explained via the following proposition.
\begin{proposition}[Lemmermeyer \cite{lemmermeyer2013ambiguous2}, Theorem 1]
Let $\bar{A}_n^\Delta$ be the strongly ambiguous class group of $L_n$.
Then $$[A_n^\Delta : \bar{A}_n^\Delta] = [E_{K_n} : N_{L_n/K_n} E_{L_n}] \big / [E_{K_n} : E_{K_n} \cap N_{L_n/K_n} L_n^\times].$$
\end{proposition}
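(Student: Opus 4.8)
The plan is to construct an explicit isomorphism
$$A_n^\Delta / \bar A_n^\Delta \;\cong\; \bigl(E_{K_n}\cap N_{L_n/K_n}(L_n^\times)\bigr)\big/\, N_{L_n/K_n}(E_{L_n}),$$
after which the proposition follows from the identity
$$\bigl[\,E_{K_n}\cap N_{L_n/K_n}(L_n^\times) : N_{L_n/K_n}(E_{L_n})\,\bigr] = \frac{[E_{K_n} : N_{L_n/K_n}(E_{L_n})]}{[E_{K_n} : E_{K_n}\cap N_{L_n/K_n}(L_n^\times)]}$$
applied to the tower $N_{L_n/K_n}(E_{L_n}) \subseteq E_{K_n}\cap N_{L_n/K_n}(L_n^\times) \subseteq E_{K_n}$. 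Write $G = \Delta = \Gal(L_n/K_n) = \langle\sigma\rangle$, cyclic of order $\ell$, let $N = N_{L_n/K_n}$, and let $\nu = 1 + \sigma + \dots + \sigma^{\ell-1} \in \ZZ[G]$; recall that $A_n^\Delta$ is the group of $G$-fixed classes of the $\ell$-class group of $L_n$ and that $\bar A_n^\Delta \subseteq A_n^\Delta$ is the subgroup of classes containing a $G$-fixed (ambiguous) ideal. It is harmless to argue with the full ideal class group $C_{L_n}$ in place of $A_n$, since the quotient $C_{L_n}^\Delta/\bar C_{L_n}^\Delta$ is visibly a subquotient of $E_{K_n}/E_{K_n}^\ell$ (because $E_{K_n}^\ell \subseteq N(E_{K_n}) \subseteq N(E_{L_n})$), hence an $\ell$-group, so passing to $\ell$-primary parts at the end recovers the stated formula.

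The core of the argument is a homomorphism $\psi : C_{L_n}^\Delta \to E_{K_n}/N(E_{L_n})$. Given a class $c$ with representative ideal $\mathfrak a$, the relation $c^{\sigma-1} = 1$ yields $\mathfrak a^{\sigma-1} = (\alpha)$ for some $\alpha \in L_n^\times$; applying $\nu$ gives $(N\alpha) = \mathfrak a^{\sigma^\ell-1} = (1)$, so $N\alpha \in E_{K_n}$, and I set $\psi(c) = N\alpha \bmod N(E_{L_n})$. This is well defined: replacing $\mathfrak a$ by $\mathfrak a(\beta)$ replaces $\alpha$ by $\alpha\,\beta^{\sigma-1}\eta$ with $\eta \in E_{L_n}$, and $N(\beta^{\sigma-1}) = N(\beta)/N(\beta) = 1$, so $N\alpha$ changes only by $N\eta$; and a different generator $\alpha$ of the same ideal changes $N\alpha$ by the norm of a unit. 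That $\psi$ is a homomorphism is immediate from $(\mathfrak a_1\mathfrak a_2)^{\sigma-1} = (\alpha_1\alpha_2)$.

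I would then pin down $\ker\psi$ and $\mathrm{im}\,\psi$. If $c \in \ker\psi$ then $N\alpha = N\eta$ for some $\eta \in E_{L_n}$, so $N(\alpha\eta^{-1}) = 1$, and Hilbert 90 gives $\alpha\eta^{-1} = \gamma^{\sigma-1}$; then $(\mathfrak a\gamma^{-1})^{\sigma-1} = (\alpha)(\gamma^{\sigma-1})^{-1} = (\eta) = (1)$, so $\mathfrak a\gamma^{-1}$ is an ambiguous ideal still representing $c$, i.e. $c \in \bar C_{L_n}^\Delta$; conversely, an ambiguous representative $\mathfrak b$ has $\mathfrak b^{\sigma-1} = (1)$, forcing $\psi(c) = 1$. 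Hence $\ker\psi = \bar C_{L_n}^\Delta$ and $\psi$ induces an injection $C_{L_n}^\Delta/\bar C_{L_n}^\Delta \hookrightarrow E_{K_n}/N(E_{L_n})$, with image contained in $(E_{K_n}\cap N(L_n^\times))/N(E_{L_n})$ since $N\alpha \in N(L_n^\times)$. For the reverse containment, take $u = N\gamma \in E_{K_n}$ with $\gamma \in L_n^\times$: the principal ideal $(\gamma)$ satisfies $\nu\bigl((\gamma)\bigr) = (N\gamma) = (1)$, so $(\gamma)$ lies in the kernel of $\nu$ acting on the fractional ideal group $I_{L_n}$. Since $I_{L_n}$ is a permutation $G$-module — a direct sum, over the primes $\mathfrak p$ of $K_n$, of modules induced from the decomposition groups — and $\hat{H}^{-1}(D, \ZZ) = 0$ for every subgroup $D \subseteq G$, Shapiro's lemma gives $\hat{H}^{-1}(G, I_{L_n}) = 0$, i.e. the kernel of $\nu$ on $I_{L_n}$ is exactly $(\sigma-1)I_{L_n}$; hence $(\gamma) = \mathfrak a^{\sigma-1}$ for some $\mathfrak a \in I_{L_n}$. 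The class $c = [\mathfrak a]$ satisfies $c^{\sigma-1} = [(\gamma)] = 1$, so $c \in C_{L_n}^\Delta$, and $\psi(c) = N\gamma$, the class of $u$. Thus $\mathrm{im}\,\psi = (E_{K_n}\cap N(L_n^\times))/N(E_{L_n})$, which together with injectivity gives the displayed isomorphism, and the proposition follows.

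I expect the only genuinely delicate step to be this surjectivity, i.e. producing an ambiguous-class preimage for an arbitrary norm of a unit; the clean device for it is the vanishing $\hat{H}^{-1}(G, I_{L_n}) = 0$, which must be phrased carefully, keeping track of the distinction between arbitrary and principal fractional ideals and of the fact that $I_{L_n}$ really does split prime-by-prime into induced modules so that Shapiro applies. Everything else reduces to Hilbert 90 together with bookkeeping about how the generator of a principal ideal is pinned down up to units.
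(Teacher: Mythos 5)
The paper does not prove this proposition---it is quoted from Lemmermeyer's Theorem 1 with no argument given---so there is no internal proof to compare against. Your proof is correct and self-contained, and it is essentially the classical argument underlying Chevalley's ambiguous class number formula: the map $\psi$ sending a $\Delta$-fixed class $[\mathfrak{a}]$ with $\mathfrak{a}^{\sigma-1}=(\alpha)$ to $N_{L_n/K_n}(\alpha)$ modulo $N_{L_n/K_n}(E_{L_n})$ is the standard device, your kernel computation via Hilbert 90 and your surjectivity argument via $\hat{H}^{-1}(\Delta, I_{L_n})=0$ (Shapiro on the permutation module of fractional ideals) are both sound, and the reduction from the full class group to the $\ell$-part is handled correctly since the quotient in question is killed by $\ell$.
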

From the proof of Proposition \ref{trivialsymbols}, we can see that for primes above $3 \mmod 8$,
the fundamental unit of $K_1$ is the norm of a unit of $L_1$. Therefore in this case all
ambiguous ideals are strongly ambiguous. However in the $7 \mmod 8$ we may have
ambiguous ideals that are not strongly ambiguous.
To develop the heuristics, we consider the matrices of Artin symbols
(as in Wittmann \cite{wittmannthesis}). These matrices have a different form depending on
if there are ambiguous ideals that are not strongly ambiguous, and this form is what
determines the predicted probabilities of class group rank.

Note that there are more class groups computed for the $n=1$ case when
$p \equiv 7 \mmod 8$. This is because
the class group in this case has the same rank as the class group of the
real subfield of $L_1$, and we can compute class group of larger discriminants
when the degree is lower.

\begin{proposition}
Let $K_\infty/K_0$ be the anti-cyclotomic $\ZZ_2$-extension of $K_0 = \QQ(i)$.
Let $L/\QQ$ be a cyclic degree $\ell$ extension in which only primes that are congruent to $7 \mmod 8$
ramify. Let $L_1/K_1$ be the lift of $L/\QQ$
and $\hat{L}_1$ be the real subfield of $L_1$.
Let $A(L_1)$ be the $\ell$-class group of $L_1$
and let $A(\hat{L}_1)$ be the $\ell$-class group of $\hat{L}_1$. Then $$A(\hat{L}_1) \simeq A(L_1).$$
\end{proposition}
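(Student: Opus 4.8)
The plan is to decompose the $\ell$-class group $A(L_1)$ under the action of $\Gamma := \Gal(L_1/L) \cong (\ZZ/2\ZZ)^2$ — a legitimate direct-sum decomposition because $\ell$ is coprime to $|\Gamma|$ — and to show that every summand except the one coming from $\hat L_1$ vanishes.

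\emph{Setup.} First I would record that $L$ is totally real (a cyclic extension of $\QQ$ of odd degree admits no complex conjugation), so that $L_1 = K_1 L = L(\zeta_8) = L(i,\sqrt 2)$, its maximal real subfield is $\hat L_1 = L(\sqrt 2)$, and the three quadratic subextensions of $L_1/L$ are $L_0 = L(i)$, $\hat L_1 = L(\sqrt 2)$ and $L(\sqrt{-2})$. Writing $A(L_1) = A(L)_\ell \oplus B_i \oplus B_{\sqrt 2} \oplus B_{\sqrt{-2}}$ for the $\Gamma$-isotypic decomposition, where $B_\bullet$ is the component for the character of $\Gamma$ cutting out $L(\bullet)$, the natural class-extension maps identify each $B_\bullet$ with the minus part $A(L(\bullet))_\ell / j A(L)_\ell$ of the corresponding quadratic field; in particular $A(\hat L_1) = A(L)_\ell \oplus B_{\sqrt 2}$, so the proposition reduces to $B_i = B_{\sqrt{-2}} = 0$ (equivalently, the two summands on which complex conjugation acts by $-1$ vanish). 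All maps here are $\Gal(L_1/\QQ)$-equivariant, so these identifications are compatible with the action of a fixed generator $\rho$ of the $\ZZ/\ell\ZZ$-factor $\Gal(L/\QQ)\subset\Gal(L_1/\QQ)$.

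\emph{Vanishing.} Here I would invoke Chevalley's formula (Theorem \ref{chevalley}) for $L/\QQ$, for $L_0/K_0 = L(i)/\QQ(i)$, and for $L(\sqrt{-2})/\QQ(\sqrt{-2})$. In each case the base field has class number $1$ and its unit group is finite of order prime to $\ell$ (so the unit-norm index is trivial, $E=1$), while each of the $t$ ramified rational primes — all $\equiv 7 \pmod 8$ by hypothesis, hence $\equiv 3 \pmod 4$ — is inert in $\QQ(i)$ and, since $\left(\tfrac{-2}{p}\right) = \left(\tfrac{-1}{p}\right)\left(\tfrac{2}{p}\right) = (-1)(+1) = -1$, also inert in $\QQ(\sqrt{-2})$; combined with the total real/imaginary behaviour of the archimedean places this gives $e = \ell^{t}$ throughout, so Chevalley yields $|A(L)_\ell^{\langle\rho\rangle}| = |A(L_0)_\ell^{\langle\rho\rangle}| = |A(L(\sqrt{-2}))_\ell^{\langle\rho\rangle}| = \ell^{t-1}$. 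Taking $\rho$-invariants of $A(L_0)_\ell = A(L)_\ell \oplus B_i$ and of $A(L(\sqrt{-2}))_\ell = A(L)_\ell \oplus B_{\sqrt{-2}}$ then forces $B_i^{\langle\rho\rangle} = B_{\sqrt{-2}}^{\langle\rho\rangle} = 0$; but a nonzero finite $\ZZ_\ell[\langle\rho\rangle]$-module has nonzero $\rho$-invariants (the ring $\FF_\ell[\langle\rho\rangle]$ is local with maximal ideal $(\rho-1)$), so $B_i = B_{\sqrt{-2}} = 0$.

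\emph{Conclusion.} With these two summands gone, $A(L_1) = A(L)_\ell \oplus B_{\sqrt 2} = A(\hat L_1)$, and unwinding the identifications this says precisely that the natural map $A(\hat L_1) \to A(L_1)$ — injective because $\ell$ is odd, with image the subgroup fixed by complex conjugation — is surjective, hence an isomorphism. I expect the only genuine work to be the bookkeeping in the Setup step: pinning down the $\Gamma$-isotypic pieces of $A(L_1)$, identifying them with the quadratic-subfield class groups via the norm/extension maps, and verifying enough equivariance that the $\rho$-invariant orders may be matched up. The Chevalley inputs and the splitting of primes $\equiv 7 \pmod 8$ in $\QQ(i)$ and $\QQ(\sqrt{-2})$ are routine.
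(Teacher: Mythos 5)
Your argument is correct, but it is a genuinely different route from the paper's. The paper never leaves the top of the diagram: it applies Chevalley's formula to the two extensions $L_1/\QQ(\zeta_8)$ and $\hat{L}_1/\QQ(\sqrt{2})$, observes that the single fundamental unit $\sqrt{2}-1$ is common to both base fields so the two unit-norm indices coincide, concludes $|A(L_1)^{\Delta}| = |A(\hat{L}_1)^{\hat{\Delta}}|$, and then squeezes $A(\hat{L}_1)^{\hat{\Delta}} \hookrightarrow (A(L_1)^+)^{\Delta} \hookrightarrow A(L_1)^{\Delta}$ to force $(A(L_1)^-)^{\Delta}=1$ and hence $A(L_1)^-=1$ by Nakayama. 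You instead refine the minus part into the two character components $B_i$ and $B_{\sqrt{-2}}$ for $\Gal(L_1/L)\simeq(\ZZ/2\ZZ)^2$ and kill each one by running Chevalley over $\QQ$, $\QQ(i)$ and $\QQ(\sqrt{-2})$; since these three base fields have finite unit groups of order prime to $\ell$, the unit-norm index is automatically trivial and you never need to discuss fundamental units at all. What your version buys is that delicate step of the paper made unnecessary (no comparison of unit indices across two different real/CM base fields), plus the stronger byproduct $A(L(i))_\ell \simeq A(L(\sqrt{-2}))_\ell \simeq A(L)_\ell$; what it costs is the extra bookkeeping of the isotypic decomposition and the splitting computations $\bigl(\tfrac{-1}{p}\bigr)=-1$, $\bigl(\tfrac{2}{p}\bigr)=+1$, $\bigl(\tfrac{-2}{p}\bigr)=-1$ for $p\equiv 7 \mmod 8$, all of which you carry out correctly. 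Both proofs share the same endgame: a nontrivial finite $\ell$-group with an action of $\ZZ/\ell\ZZ$ has nontrivial invariants, so vanishing of the $\rho$-invariants kills the module.
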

\begin{proof}
We have the following diagram of fields:
\begin{center}
\begin{tikzpicture}[node distance = 2.5cm, auto]
    \node (Q) at (2,0) {$\QQ$};
    \node (K) at (2,2) {$K_0 = \QQ(i)$};
    \node (K2) at (4,2) {$\QQ(\sqrt{2})$};
    \node (Kw) at (6,4) {$\hat{L_1}$};
    \node (L) at (2,4) {$K_1 = \QQ(\zeta_8)$};
    \node (Lw) at (4,6) {${L_1}$};
    \draw[-] (Q) to node [swap] {2} (K);
    \draw[-] (K2) to node [swap] {$\ell$} (Kw);
    \draw[-] (Kw) to node [swap] {2} (Lw);
    \draw[-] (K) to node [swap] {2} (L);
    \draw[-] (L) to node [swap] {$\ell$} (Lw);
    \draw[-] (Q) to node [swap] {2} (K2);
    \draw[-] (K2) to node [swap] {2} (L);
\end{tikzpicture}
\end{center}
Let $\Delta = \Gal(L_1/K_1)$ and let $\hat{\Delta} = \Gal \big (\hat{L}_1/\QQ(\sqrt{2}) \big)$.
Let
$$[E_{K_1}:E_{K_1} \cap N_{L_1/K_1}(L_1^\times)] = \ell^e$$
and
$$[E_{\QQ(\sqrt{2})}:E_{\QQ(\sqrt{2})} \cap N_{\hat{L}_1/\QQ(\sqrt{2})}(\hat{L}_1^\times)] = \ell^{\hat{e}}.$$
Then by
Chevalley's formula for $\hat{L_1}/\QQ(\sqrt{2})$,
$$| \ClLp(\hat{L_1})^\Delta | = \ell^{t-1-\hat{e}}$$
and similarly for $L_1/\QQ(\zeta_8)$,
$$| \ClLp({L_1})^{\hat{\Delta}} | = \ell^{t-1-e}.$$
Since there is only one fundamental unit in $\QQ(\zeta_8)$ and it is $\sqrt{2}-1$,
and since $\sqrt{2}-1$ is a norm from $L_1$ if and only if it is a norm from $\hat{L}_1$,
we have $e = \hat{e}$. Therefore $| \ClLp(\hat{L_1})^{\hat{\Delta}}  | = | \ClLp({L_1})^\Delta |$.
Since these are both elementary groups, because $\QQ(\sqrt{2})$ and $\QQ(\zeta_8)$
both have class number 1, they must be isomorphic.

Using the same notation as Washington \cite{washington2012introduction}*{\S 10.2}, let
$$\Gal(L_1/\hat{L_1}) = \{1,J\}$$ where $J$ is complex conjugation and let
$$\ClLp({L_1})^\pm = \frac{1\pm J}{2} \ClLp({L_1}).$$
Then
$$\ClLp({L_1}) = \ClLp({L_1})^+ \oplus \ClLp({L_1})^-.$$

Then since $\ClLp(\hat{L_1})$ injects into $\ClLp({L_1})^+$, we have the following sequence of
embeddings:
$$ \ClLp(\hat{L_1})^{\hat{\Delta}} \hookrightarrow \left(\ClLp({L_1})^+\right)^\Delta \hookrightarrow
\ClLp({L_1})^\Delta \simeq \ClLp(\hat{L_1})^{\hat{\Delta}}$$
and therefore we must have equality. Therefore
$$\left(\ClLp({L_1})^+\right)^\Delta \simeq \ClLp({L_1})^\Delta$$
and so $\left(\ClLp({L_1})^-\right)^\Delta$ must be trivial. By Nakayama's lemma, $\ClLp({L_1})^-$
must also be trivial.
Additionally,
$$A(L_1)^+ = N_{L_1/\hat{L}_1} A(L_1) \hookrightarrow A(\hat{L}_1) \hookrightarrow A(L_1)^+$$
and therefore
$$A(L_1) \simeq A(L_1)^+ \simeq A(\hat{L}_1).$$
\end{proof}

\subsection{The Structure Theorem}

During the course of this research, we encountered a numerical example that
exhibited a phenomenom for the $\ell \neq p$ case that could not have occured in the $\ell=p$
case. We wondered if there could be a uniform algebraic treatment of the cases for all $\ell$
and if there were some undiscovered structure theorem for the $\ell \neq p$ case. The
following indicates that this might not be the case.

First, we review the structure theorem for the $\ell$-class group in $\ZZ_\ell$-extensions.
\begin{theorem}
Let $L_\infty/L$ be a $\ZZ_\ell$-extension. Let $\ell^{e_n}$ be the exact power of $\ell$
dividing the class number of $L_n$. Then there exist integers $\lambda \geq 0$, $\mu \geq 0$, and $\nu$, all independent of $n$, and an integer $n_0$ such that for all $n \geq n_0$,
$$e_n = \mu \ell^n + \lambda n + \nu.$$
\end{theorem}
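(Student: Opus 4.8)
The plan is to reduce the statement to Iwasawa's module-theoretic machinery. Write $\Gamma = \Gal(L_\infty/L) \cong \ZZ_\ell$, fix a topological generator $\gamma_0$, and let $A_n$ denote the $\ell$-class group of $L_n$, so $\ell^{e_n} = |A_n|$. After replacing $L$ by a suitable layer $L_{n_0}$ — permissible since a $\ZZ_\ell$-extension has only finitely many ramified primes and each inertia group is open in $\ZZ_\ell$, hence each ramified prime is totally ramified from some level on — I may assume every prime ramifying in $L_\infty/L$ is totally ramified. First I would form the projective limit $X = \varprojlim A_n$ along the norm maps; since each $A_n$ is a finite $\ZZ_\ell[\Gal(L_n/L)]$-module, $X$ is a compact module over $\ZZ_\ell[[\Gamma]]$, which I identify with the Iwasawa algebra $\Lambda = \ZZ_\ell[[T]]$ via $\gamma_0 \mapsto 1+T$.

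Next I would show that $X$ is a finitely generated torsion $\Lambda$-module. Finite generation follows from the topological Nakayama lemma, for which it suffices to see that $X/(\ell,T)X$ is finite; this comes from a control argument identifying $X/TX = X_\Gamma$ with $A_0$ up to finite kernel and cokernel, together with the finiteness of $A_0$. For torsionness — that the characteristic ideal $\mathrm{char}_\Lambda(X)$ is nonzero — one uses that a free summand $\Lambda$ would force $X/\omega_n X$ to be infinite (since $\Lambda/\omega_n\Lambda \cong \ZZ_\ell^{\ell^n}$), contradicting the finiteness of $A_n$, where $\omega_n = (1+T)^{\ell^n}-1$.

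Then I would invoke the structure theorem for finitely generated torsion $\Lambda$-modules: there is a $\Lambda$-homomorphism with finite kernel and cokernel
$$X \longrightarrow E := \bigoplus_i \Lambda/(\ell^{m_i}) \,\oplus\, \bigoplus_j \Lambda/(f_j(T)^{k_j}),$$
with each $f_j$ a distinguished irreducible polynomial. Put $\mu = \sum_i m_i$ and $\lambda = \sum_j k_j \deg f_j$. The computations $|\Lambda/(\ell^m,\omega_n)| = \ell^{m\ell^n}$ and, for a distinguished polynomial $g$, $|\Lambda/(g,\omega_n)| = \ell^{(\deg g)n + c_g}$ for all large $n$ (using that $\omega_n/\omega_{n-1}$ is itself distinguished) then give $|E/\omega_n E| = \ell^{\mu\ell^n + \lambda n + c}$ for $n \gg 0$.

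Finally I would transfer the count to $A_n$: the totally-ramified hypothesis gives a control theorem identifying $A_n$ with $X/\omega_n X$ up to a bounded factor for $n \geq n_0$, and the finite kernel and cokernel of $X \to E$ alter $|X/\omega_n X|$ relative to $|E/\omega_n E|$ only by a factor that, via the snake lemma applied to multiplication by $\omega_n$, stabilizes as $n$ grows. Absorbing all these bounded discrepancies into one $n$-independent constant $\nu$ yields $e_n = \mu\ell^n + \lambda n + \nu$ for $n \geq n_0$. I expect the main obstacle to be exactly this last bookkeeping — showing that the several $O(1)$ error terms coalesce into a single constant rather than merely remaining bounded — which is where one leans on the distinguishedness of $\omega_n/\omega_{n-1}$ and on propagating control from layer $n-1$ to layer $n$.
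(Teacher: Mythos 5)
This statement is Iwasawa's classical growth theorem, which the paper merely quotes as background (``we review the structure theorem\dots'') and does not prove, so there is no internal proof to compare against. Your proposal is the standard textbook argument (Washington, \emph{Introduction to Cyclotomic Fields}, Theorem 13.13) and is correct in outline: pass to a layer where all ramified primes are totally ramified, form $X=\varprojlim A_n$, show it is finitely generated torsion over $\Lambda=\ZZ_\ell[[T]]$, apply the structure theorem, and compute $|E/\omega_n E|$ for elementary $E$. Two places where your sketch is looser than the actual proof deserve mention. First, the control theorem is not literally ``$A_n\cong X/\omega_n X$ up to a bounded factor''; the precise statement is $A_n\cong X/\nu_{n,n_0}Y_{n_0}$, where $Y_{n_0}$ is generated by $\omega_{n_0}X$ together with elements coming from the inertia subgroups of the several ramified primes, and $\nu_{n,n_0}=\omega_n/\omega_{n_0}$. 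Your version can be salvaged (since $Y_{n_0}\supseteq \omega_{n_0}X$ with finite index, the discrepancy is bounded), but the exact isomorphism is what makes the final constant $\nu$ genuinely constant rather than merely bounded. Second, the stabilization you flag as ``the main obstacle'' is indeed a real lemma: one shows that if $Y\to E$ is a pseudo-isomorphism then $|Y/\nu_{n,n_0}Y|$ and $|E/\nu_{n,n_0}E|$ differ by a factor that is \emph{eventually constant} in $n$, by comparing the snake-lemma sequences at levels $n$ and $n+1$ and using that multiplication by $\nu_{n+1,n_0}/\nu_{n,n_0}$ eventually acts invertibly on the finite kernel and cokernel. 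With those two points made precise, your argument is complete and is the same proof one would cite for the theorem.
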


Let $K_0$ be an imaginary quadratic field and let $L/\QQ$ be an extension of degree $\ell$.
Let $K_\infty/K_0$ be the anti-cyclotomic $\ZZ_\ell$-extension and let $L_\infty/L_0$ be
its lift.
Furthermore, let
$$\Lambda = \ZZ_\ell[[T]]$$ and
$$\nu_n = (1+T)^{\ell^n-1} + (1+T)^{\ell^n-2} + \ldots + (1+T) + 1.$$
Then an elementary $\Lambda$-module $E$ is defined to be one of the form
$$E = \bigoplus_{i} \Lambda/(\ell^{\mu_i}) \oplus \bigoplus_j \Lambda/(f_j)$$
where $\mu_i > 0$ is an integer and $f_j$ is a distinguished polynomial, which
means it is monic and $\ell$ divides each coefficient (except the leading monic coefficient).
Let $A_n$ be the $\ell$-class group of $L_n$ and $\ell^{e_n}$ be the exact power of $\ell$
dividing the class number of $L_n$.

\begin{proposition}[{\cite{hubbardwashington2017}*{Proposition 12}}]\label{prop12}
There exist an elementary $\Lambda$-module $E$ and a finite $\Lambda$-module
$F$ such that
$$|A_n| = |A_0| \cdot |F/\nu_n F| \cdot |E/\nu_n E| \geq \ell^{e_0 + \mu(\ell^n-1)}.$$
In particular,
$$\mu \leq \frac{e_n-e_0}{\ell^n-1}.$$
\end{proposition}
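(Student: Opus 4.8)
The plan is to deduce the statement from the structure of the full Iwasawa module. Write $\Gamma = \Gal(L_\infty/L_0) \cong \ZZ_\ell$, fix a topological generator $\gamma$, and identify $\Lambda = \ZZ_\ell[[T]]$ with the completed group ring via $\gamma \mapsto 1+T$. Let $X = \varprojlim_n A_n$ be the inverse limit of the $\ell$-class groups with respect to the norm maps. By standard Iwasawa theory for $\ZZ_\ell$-extensions, $X$ is a finitely generated torsion $\Lambda$-module, so by the structure theorem there is a pseudo-isomorphism $X \to E$ onto an elementary $\Lambda$-module $E = \bigoplus_i \Lambda/(\ell^{\mu_i}) \oplus \bigoplus_j \Lambda/(f_j)$, with $\mu = \sum_i \mu_i$ the $\mu$-invariant, and its kernel and cokernel are finite.

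The first real step is the control comparison between the finite-level groups $A_n$ and $X$. Only finitely many primes ramify in $L_\infty/L_0$ (those above $\ell$, since the anti-cyclotomic $\ZZ_\ell$-extension is unramified outside $\ell$), and from some layer on the ramification is total; Iwasawa's argument then expresses $A_n$, for $n$ large, as a quotient of $X$ by the submodule generated by $\nu_n X$ together with the (bounded, eventually $n$-independent) contributions of the inertia subgroups at the ramified primes. Collecting all the bounded discrepancies — the finite kernel and cokernel of $X\to E$, the inertia corrections, and the first few layers — into a single finite $\Lambda$-module $F$, one obtains the asserted identity
\[
|A_n| = |A_0|\cdot |F/\nu_n F|\cdot |E/\nu_n E|.
\]
This is precisely the bookkeeping carried out for $\ZZ_p$-extensions in \cite{washington2012introduction}*{Ch.~13}; the only point needing attention here is the ramification of the primes above $\ell$ in the anti-cyclotomic tower, which is what guarantees $F$ can be taken finite.

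It remains to extract the lower bound from the elementary module $E$. Since $\nu_n = \bigl((1+T)^{\ell^n}-1\bigr)/T$ is monic in $T$ of degree $\ell^n-1$ and satisfies $\nu_n \equiv T^{\ell^n-1} \pmod{\ell}$, each cyclic summand $\Lambda/(\ell^{\mu_i})$ of $E$ yields $\bigl(\ZZ/\ell^{\mu_i}\ZZ\bigr)[[T]]/(\nu_n)$, which by polynomial division is free of rank $\ell^n-1$ over $\ZZ/\ell^{\mu_i}\ZZ$; hence the $\mu$-part of $E$ contributes exactly $\ell^{\mu(\ell^n-1)}$ to $|E/\nu_n E|$, while the $\lambda$-part $\bigoplus_j \Lambda/(f_j)$ and the module $F$ contribute finite groups of order at least $1$. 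Since $|A_0| = \ell^{e_0}$, this gives $|A_n| \ge \ell^{e_0 + \mu(\ell^n-1)}$; as $|A_n| = \ell^{e_n}$ we conclude $e_n \ge e_0 + \mu(\ell^n-1)$, equivalently $\mu \le (e_n-e_0)/(\ell^n-1)$.

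The hard part will be the control step: producing a single finite $\Lambda$-module $F$, uniform in $n$, that simultaneously accounts for the failure of $L_\infty/L_0$ to be totally ramified, the pseudo-isomorphism defect in $X \to E$, and the behaviour of the norm maps $A_{n+1}\to A_n$ at small $n$. Once the identity $|A_n| = |A_0|\cdot|F/\nu_n F|\cdot|E/\nu_n E|$ is established, the remaining inequality is the routine module computation sketched above.
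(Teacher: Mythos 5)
The paper does not actually prove this proposition; it imports it wholesale from Hubbard--Washington (their Proposition~12), so there is no internal argument to compare yours against. Your endgame is correct and is surely how the cited proof concludes: $\nu_n$ is monic in $T$ of degree $\ell^n-1$ and congruent to $T^{\ell^n-1}$ modulo $\ell$, so $\bigl|\Lambda/(\ell^{\mu_i},\nu_n)\bigr|=\ell^{\mu_i(\ell^n-1)}$ exactly by polynomial division, the $\lambda$-summands and $F$ contribute factors at least $1$, and with $|A_0|=\ell^{e_0}$ the inequality $\mu\le (e_n-e_0)/(\ell^n-1)$ drops out of the displayed identity.

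The identity itself is the gap, and the reduction you offer does not close it. First, the control argument from \cite{washington2012introduction}*{Ch.~13} expresses $A_n$ as a quotient of $X$ only for $n\ge n_0$, where $n_0$ is a level past which every ramified prime is totally ramified; but the identity here must hold at \emph{every} level --- the paper later applies it (and its companion, Proposition~17) at $n=1$ to force $F$ and $E'$ to be trivial via Nakayama. A single finite $\Lambda$-module $F$ has to make the formula exact simultaneously at all $n$, so ``collecting the first few layers into $F$'' is not an available move: there is no level-by-level freedom in the choice of $F$. One must actually verify the ramification behaviour of the anti-cyclotomic lift down to level $0$, and that verification is the content of the cited result. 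Second, the pseudo-isomorphism defect cannot be the source of the factor $|F/\nu_n F|$: choosing an injection $E\hookrightarrow Y$ with finite cokernel $F$ (legitimate, since elementary modules have no nonzero finite submodules) and applying the snake lemma to multiplication by $\nu_n$, the factor $|F/\nu_n F|$ cancels against $|F[\nu_n]|$, because the kernel and cokernel of an endomorphism of a finite group have equal order. So the finite module in the statement has to be manufactured from the inertia/control data, not from the structure theorem, and the $F$ you describe would be invisible in the final formula. Neither point disturbs the asymptotic inequality for large $n$, but the proposition as stated and as used requires more.
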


Hubbard and Washington prove the following theorem.
\begin{theorem}[{\cite{hubbardwashington2017}*{Theorem 2}}]\label{theorem2}
Suppose $s$ distinct primes $q \neq \ell$ are inert in $K_0/\QQ$ and
ramify in $L_0/K_0$, a degree $\ell$ extension. Then $\mu \geq s-1$ for the $\ZZ_\ell$-extension
$L_\infty/L_0$.
\end{theorem}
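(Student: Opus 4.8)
The plan is to extract a lower bound on the exact power $e_n$ of $\ell$ dividing $h(L_n)$ by applying Chevalley's formula to each layer $L_n/K_n$, and then to read off $\mu \geq s-1$ from the Iwasawa growth of $e_n$.

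First I would pin down the ramification in $L_n/K_n$. Each of the $s$ rational primes is inert, hence unramified, in $K_0/\QQ$, so by Lemma~\ref{splitcompletely} it splits completely in $K_\infty/K_0$; consequently it is unramified in $K_n/\QQ$ and has exactly $\ell^n$ primes of $K_n$ lying over it. Since such a prime is totally ramified in the cyclic degree $\ell$ field $L/\QQ$ while $K_n/\QQ$ is unramified there, each of these $\ell^n$ primes is totally ramified in $L_n = K_n L$; as $K_n$ and $L_n$ are totally complex there is no archimedean contribution, so $e(L_n/K_n) = \ell^{\ell^n s}$. Feeding this into Theorem~\ref{chevalley} and passing to $\ell$-parts --- which is legitimate because $E(L_n/K_n)$ is a power of $\ell$, the quotient $K_n^\times/N_{L_n/K_n}(L_n^\times)$ having exponent dividing $\ell$ --- yields
$$|A_n^\Delta| = \frac{h_\ell(K_n)\,\ell^{\ell^n s - 1}}{E(L_n/K_n)},$$
where $A_n^\Delta = A_n^{\Gal(L_n/K_n)}$ and $h_\ell(K_n)$ is the $\ell$-part of the class number of $K_n$.

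Next I would bound the unit index: since $N_{L_n/K_n}(u) = u^\ell$ for $u \in E_{K_n}$, we have $E_{K_n}^\ell \subseteq E_{K_n} \cap N_{L_n/K_n}(L_n^\times)$, so $E(L_n/K_n) \leq [E_{K_n}:E_{K_n}^\ell]$; and because $K_n$ is totally complex of degree $2\ell^n$ its unit group has rank $\ell^n - 1$, giving $E(L_n/K_n) \leq \ell^{\ell^n}$ (the bound is $\ell^{\ell^n - 1}$ unless $\zeta_\ell \in K_n$, but the crude estimate is enough). Hence $|A_n| \geq |A_n^\Delta| \geq \ell^{\ell^n(s-1)-1}$, i.e.\ $e_n \geq \ell^n(s-1) - 1$ for every $n$. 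Finally, by the Iwasawa structure theorem for $\ZZ_\ell$-extensions recalled above (which can also be extracted from the module decomposition of Proposition~\ref{prop12}), $e_n = \mu\ell^n + \lambda n + \nu$ for all $n \geq n_0$; dividing $\mu\ell^n + \lambda n + \nu \geq \ell^n(s-1) - 1$ by $\ell^n$ and letting $n \to \infty$ gives $\mu \geq s-1$.

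All of this is fairly mechanical once the quoted results are in hand; the step most in need of care is the ramification bookkeeping --- checking that all $\ell^n s$ primes of $K_n$ above the $s$ ramified rational primes really are totally ramified in $L_n/K_n$, and that the archimedean primes contribute nothing --- together with verifying that the passage to $\ell$-parts in Chevalley's formula is valid, i.e.\ that $E(L_n/K_n)$ has trivial prime-to-$\ell$ part.
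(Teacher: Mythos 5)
Your argument is correct and is essentially the same as the standard one: the paper itself carries out exactly this Chevalley-formula computation with the trivial unit-index bound in \S\ref{sec:Ln} (obtaining $|A_n| \geq \ell^{2^n t - 2^n}$ in the $\ZZ_2$ setting), and the cited proof of Theorem \ref{theorem2} combines that lower bound on $e_n$ with the Iwasawa growth formula just as you do. No gaps; your care about the $\ell$-part of $E(L_n/K_n)$ and the ramification bookkeeping is exactly what is needed.
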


Consider the class groups in the $\ZZ_2$-extension (so $\ell=2$).
Assume two primes ramify in $L_0/K_0$ and we have
$e_0 = 1$ and $e_1 = 2$.
Then by Proposition \ref{prop12}
$$\mu \leq \frac{e_1-e_0}{\ell^1-1} = 1$$
and by Theorem \ref{theorem2},
$$\mu \geq 1.$$
Therefore $\mu = 1$.

By \cite{hubbardwashington2017}*{Proposition 17}, there exists a $\Lambda$-module $E'$
such that
$$\ell^{e_n-e_0 - \mu(\ell^n-1)} = |F/\nu_n F| \times |E'/\nu_n E'|.$$
For $n=1$, the left-hand side is 1, and therefore both orders on the right must be 1.
Then by Nakayama's Lemma, both $F$ and $E'$ must be trivial.

Therefore $\ell^{e_n-e_0 - \mu(\ell^n-1)} = 1$ for all $n$. Since $e_0 = 1$, $\mu=1$ and
$\ell=2$, we have
\begin{align}e_n = 2^n.\end{align}\label{eneqn}

Now let's consider the $\ell \neq p$ situation. Let $K_0 = \QQ(i)$ and consider the
cyclic cubic extension $L_0/K_0$ given by
$$x^3 - 76x^2 + 1636x - 7064$$
in which only primes over 7 and 31 ramify.

We have the following 3-class groups.
\begin{align*}
A_{0} &= \ZZ/3\ZZ \\
A_{1} &= (\ZZ/3\ZZ)^2 \\
A_{2} &= (\ZZ/3\ZZ)^6 \\
\end{align*}
By Chevalley's formula, $e_n \geq 2^n-1$, so
we are in the analogue of the $\mu \geq 1$ situation.
But using the results above for the $\ell=p$ case, the sequence
$e_0=1$, $e_1=2$, $e_2 = 6$ is not possible for the $\ell$-part of class group (see Equation \ref{eneqn}).
So the theory resulting from the structure theorem
for the $\ell$-class group in $\ZZ_\ell$-extensions does not extend to the $p$-class group
in $\ZZ_\ell$-extensions when $p \neq \ell$.

\pagebreak

\bibliographystyle{amsplain}
\bibliography{akirsch}

\end{document}